\newtheorem{theorem}{Theorem}[section]
\newtheorem{lemma}[theorem]{Lemma}
\newtheorem{proposition}[theorem]{Proposition}
\newtheorem{corollary}[theorem]{Corollary}
\theoremstyle{definition}
\theoremstyle{remark}
\newtheorem{remark}[theorem]{Remark}
\numberwithin{equation}{section}
\begin{document}

\title{On the equality of operator valued weights}

\author{L\'aszl\'o Zsid\'o}
\address{Department of Mathematics, University of Rome "Tor Vergata",
Via della Ricerca Scientifica, 00133 Roma, Italia}
\email{zsido@mat.uniroma2.it}
\thanks{}


\subjclass[2000]{Primary 46L10; Secondary 46L99}

\date{June 27, 2021.}
\thanks{The author was supported by GNAMPA-INDAM and ERC Advanced Grant 669240 QUEST}
\dedicatory{This paper is dedicated to the memory of Uffe Haagerup.}

\keywords{W*-algebras, weights, operator valued weights, modular authomorphism group}

\begin{abstract}
G. K. Pedersen and M. Takesaki have proved in 1973 that if $\varphi$ is a faithful, semi-finite,
normal weight on a von Neumann algebra $M\;\!$, and $\psi$ is a $\sigma^{\varphi}$-invariant,
semi-finite, normal weight on $M\;\!$, equal to $\varphi$ on the positive part of a weak${}^*$-dense
$\sigma^{\varphi}$-invariant $*$-subalgebra of $\mathfrak{M}_{\varphi}\;\!$, then $\psi =\varphi\;\!$.

In 1978 L. Zsid\'o extended the above result by proving: if $\varphi$ is as above,
$a\geq 0$ belongs to the centralizer $M^{\varphi}$ of $\varphi\;\!$,
and $\psi$ is a $\sigma^{\varphi}$-invariant, semi-finite, normal weight
on $M\;\!$, equal to $\varphi_a:=\varphi (a^{1/2}\;\!\cdot\;\! a^{1/2})$ on the positive part of a
weak${}^*$-dense $\sigma^{\varphi}$-invariant $*$-subalgebra of $\mathfrak{M}_{\varphi}\;\!$,
then $\psi =\varphi_a\;\!$.

Here we will further extend this latter result, proving criteria for both the inequality
$\psi \leq\varphi_a$ and the equality $\psi =\varphi_a\;\!$. Particular attention is
accorded to criteria with no commutation assumption between $\varphi$ and $\psi\;\!$,
in order to be used to prove inequality and equality criteria for operator valued weights.

Concerning operator valued weights, it is proved that if $E_1\;\! ,E_2$ are semi-finite, normal
operator valued weights from a von Neumann algebra $M$ to a von Neumann subalgebra
$N\ni 1_M$ and they are equal on $\mathfrak{M}_{E_1}\;\!$, then $E_2\leq E_1\;\!$.
Moreover, it is shown that this happens if and only if for any (or, if $E_1\;\! ,E_2$ have
equal supports, for some) faithful, semi-finite, normal weight $\theta$ on $N$
the weights $\theta\circ E_2\;\! ,\theta\circ E_1$ coincide on $\mathfrak{M}_{\theta\circ E_1}\;\!$.
\end{abstract}

\maketitle

\section*{Introduction}

For the equality of two normal positive forms on a $W^*$-algebra it is enough that
they coincide on a weak${}^*$-dense subset. For unbounded normal weights
equality can follow from equality on a weak${}^*$-dense subset only under additional
conditions on the weights and/or on the subset.

A first criterion of this type, which served as a model for subsequent criteria,
is \cite{PT}, Proposition 5.9 : Let $\varphi\;\! ,\psi$ be semi-finite, normal weights
on a $W^*$-algebra $M$, $\varphi$ faithful and $\psi$ $\sigma^{\varphi}$-invariant.
If $\psi (x^*x)=\varphi (x^*x)$ for all $x$ in a weak${}^*$-dense,
$\sigma^{\varphi}$-invariant ${}^*$-subalgebra of $\mathfrak{M}_{\varphi}\;\!$,
then $\psi =\varphi\;\!$.
This citerion was further extended in \cite{Z2}, Theorem 2.3 as follows :
Let $\varphi\;\! ,\psi$ be as above, and $a$ a positive element of the centralizer of
$\varphi\;\!$. If $\psi (x^*x)=\varphi (a^{1/2}x^*x\;\! a^{1/2})$ for $x$ in a
weak${}^*$-dense, $\sigma^{\varphi}$-invariant ${}^*$-subalgebra of
$\mathfrak{M}_{\varphi}\;\!$, then $\psi =\varphi (a^{1/2}\;\!\cdot\;\! a^{1/2})\;\!$.

In both of the above results the $\sigma^{\varphi}$-invariance of $\psi$ is assumed.
Therefore they are not useful to prove equality criteria for operator valued weights.
Indeed, if $M$ is a $W^*$-algebra and $1_M\in N\subset M$ a $W^*$-subalgebra, then,
according to \cite{H1}, Lemma 4.8, the equality of two faithful, semi-finite, normal
operator valued weights $E_1\;\! ,E_2$ from $M$ to $N$ would be implied by the
equality of $\theta\circ E_1=\theta\circ E_2$ for some faithful, semi-finite, normal
weight $\theta$ on $N$. However, there is no known way to derive the
$\sigma^{\theta\circ E_1}$-invariance of $\theta\circ E_2$ from appropriate
properties of $E_1\;\! ,E_2$ and a suitable choice of $\theta\;\!$.
Consequently, having in view equality criteria for normal operator
valued weights, it is of interest to prove criteria for the equality of normal weights
with no commutation assumption for them.

Similar considerations support the interest to prove inequality criteria for
normal weights without making any commutation assumption.

After a preliminary first section concerning notations and used results, the
second section is dedicated to the so-called {\it regularizing nets}, a useful tool
in the modular theory of von Neumann algebras. It was used in the proof
of \cite{PT}, Lemma 5.2 and of \cite{Z2}, Lemma 2.1, on which \cite{PT}, Proposition 5.9
resp.  \cite{Z2}, Theorem 2.3 are based. It will be an essential ingredient also of the
proof of the main result of Section 3.

We will call regularizing net for a faithful, semi-finite, normal weight $\varphi$ on a

\noindent $W^*$-algebra $M$ (cf. \cite{Z2}, \S 1) any net $(u_{\iota})_{\iota}$ in the
Tomita algebra of $\varphi$ such that
\begin{itemize}
\item [(i)] $\sup\limits_{\substack{ z\in K \\ \iota }}\|\sigma^{\varphi}_z(u_{\iota})\|
<+\infty$ and $\sup\limits_{\substack{ z\in K \\ \iota }}\|\sigma^{\varphi}_z(u_{\iota})_{\varphi}\|
<+\infty$ for each compact $K\subset\mathbb{C}\;\!$;
\item[(ii)] $\sigma^{\varphi}_z(u_\iota) \xrightarrow{\;\; \iota\;\,}1_M$ in the $s^*$-topology
for all $z\in\mathbb{C}\;\!$.
\end{itemize}
They can be derived from a bounded net $(x_\iota )_{\iota}$ in $\mathfrak{N}_{\varphi}^*
\cap \mathfrak{N}_{\varphi}$ such that $x_{\iota} \xrightarrow{\;\; \iota\;\,}1_M$ in the $s^*$-topology
by setting
\begin{equation}\label{mollify-pre}
u_{\iota} =\frac 1{\;\!\sqrt{\pi}\;\!}\! \int\limits_{-\infty}^{+\infty}\!\!
e^{-t^2}\! {\sigma}^{\varphi}_t(x_{\iota})\;\!{\rm d}t\;\! . \tag{\ref{mollify}}
\end{equation}
However the verification that the such obtained net $(u_{\iota})_{\iota}$ satisfies (ii), done
in the proof of \cite{PT}, Lemma 5.2, works only if the net $(x_\iota )_{\iota}$ is increasing,
while in the proof of \cite{Z2}, Lemma 2.1 no verification is given.
If $(x_\iota )_{\iota}$ would be a sequence, we could use the dominated convergence theorem
of Lebesgue, but the general case needs a complete treatment. We will prove that, even if we
assume only $x_{\iota}\in \mathfrak{N}_{\varphi}\;\!$, formula (\ref{mollify-pre}) yields always a
regularizing net for $\varphi$ (Theorem \ref{reg.net}).

Section 3 is dedicated to criteria for inequalities and equality between normal weights.
The main result is an extension of \cite{Z2}, Lemma 2.1 : If $\varphi\;\! ,\psi$ are semi-finite,
normal weights on a $W^*$-algebra $M$ with $\varphi$ assumed faithful, $a\geq 0$ is an element
of the centralizer $M^{\varphi}$ of $\varphi\;\!$, and, setting $\varphi_a:=
\varphi (a^{1/2}\;\!\cdot\;\! a^{1/2})\;\!$, $\psi (x^*x)=\varphi_a(x^*x)$ holds for every $x$ in a
weak${}^*$-dense, $\sigma^{\varphi}$-invariant ${}^*$-subalgebra of
$\mathfrak{M}_{\varphi_a}\;\!$, then $\psi \leq \varphi_a$ (Theorem \ref{ineq}).
If we assume additionally that $\psi$ is $\sigma^{\varphi}$-invariant, then the
equality $\psi =\varphi_a$ follows (Theorem \ref{eq-1}).
But the above inequality criterion implies also the following equality criterion :
If $\varphi\;\! ,\psi$ are faithful, semi-finite, normal weights on a  $W^*$-algebra $M$, $a ,b\geq 0$ 
are elements of $M^{\varphi}$ resp. $M^{\psi}$, and $\psi_b (x^*x)={\varphi}_a(x^*x)$

\noindent holds for all $x$ in a weak${}^*$-dense ${}^*$-subalgebra of
$\mathfrak{M}_{\varphi_a}\cap\mathfrak{M}_{\psi_b}\;\!$, which is both $\sigma^{\varphi}$- and
$\sigma^{\psi}$-invariant, then $\psi_b = \varphi_a$ (Theorem \ref{eq-4}).
We underline that, neither in the inequality criterion nor in the above result $\sigma^{\varphi}$- or
$\sigma^{\psi}$-invariance of the other weight is required.

In the last Section 4 we discuss inequality and equality criteria for semi-finite, normal operator valued
weights $E_1\;\! ,E_2$ from a $W^*$-algebra $M$ to a $W^*$-subalgebra $1_M\in N\subset M$.
Reduction to the case of scalar weights, considered in Section 3, is used.

The main result of the section is Theorem \ref{general-ineq}, which consists of two parts.
In part (i) it is proved that if $\mathfrak{M}_{E_1}\!\subset \mathfrak{M}_{E_2}$ and $E_1\;\! ,E_2$
coincide on $\mathfrak{M}_{E_1}\;\!$, then $E_2\leq E_1\;\!$, while in part (ii) it is shown that
$\mathfrak{M}_{E_1}\!\subset \mathfrak{M}_{E_2}$ and $E_1=E_2$ on $\mathfrak{M}_{E_1}$
happens if and only if $\mathfrak{M}_{\theta\circ E_1}\!\subset \mathfrak{M}_{\theta\circ E_2}$
and  $\theta\circ E_1=\theta\circ E_2$ on $\mathfrak{M}_{\theta\circ E_1}$ for all faithful, semi-finite,
normal weights $\theta$ on $N$. We notice that the difficulty in proving (ii) arises from the
fact that there is no generic relation between $\mathfrak{M}_{E_1}$ and $\mathfrak{M}_{\theta\circ E_1}\;\!$.

If $E_1$ and $E_2$ have equal supports, for $\mathfrak{M}_{E_1}\!\subset \mathfrak{M}_{E_2}$
and $E_1=E_2$ on $\mathfrak{M}_{E_1}$ another characterization, based on the Connes cocycle,
is provided (Theorem \ref{compl-ineq}). Most likely the support condition is here redundant.

\section{Preliminaries}
We will use the terminology of \cite{SZ1} and \cite{St}. In particular,
\begin{itemize}
\item $(\;\!\cdot\;\! |\;\!\cdot\;\! )$ will denote the inner product of a Hilbert space and it will be
assumed linear in the first variable and antilinear in the second variable;
\item $B(H)$ will denote the algebra of all bounded linear operators on the Hilbert space $H$,
with the identity operator denoted by $1_H\;\!$;
\item for $\xi$ in a Hilbert space $H$,
$\omega_{\xi}$ will denote the positive vector functional

\noindent $B(H)\ni x\longmapsto (x\;\! \xi | \xi )\;\!$;
\item the unit of a $W^*$ algebra $M$ will be denoted by $1_M\;\!$;
\item the $w$-topology on a $W^*$-algebra $M$ is the weak${}^*$topology, the locally convex
topology defined by the semi-norms
\smallskip

\centerline{$\qquad\qquad M\ni x\longmapsto |\varphi (x)|\;\!$, $\varphi$ a normal positive form on $M$;}
\smallskip
\item the $s^*$-topology on a $W^*$-algebra $M$ is the locally comvex topologies defined by the semi-norms
\smallskip

\centerline{$\qquad\qquad p_{\varphi} : M\ni x\longmapsto \varphi (x^*x)^{1/2}+ \varphi (x\;\! x^*)^{1/2}\;\!$,
$\varphi$ a normal positive form on $M$;}
\smallskip

\item $s(A)$ denotes the support projection of a self-adjoint operator $A$ affiliated to a
$W^*$-algebra $M$ (does not matter in which spatial representation);
\item $s(\varphi )$ denotes the support projection of a normal positive form or a normal weight $\varphi$
on a $W^*$-algebra $M$;
\item $\overline{M}\;\!^+$ denotes the extended positive part of a $W^*$-algebra $M$.
\end{itemize}

As usual, for a weight $\varphi$ on a $W^*$-algebra $M$ we use the notations
\smallskip

\noindent\hspace{3.9 cm}$\mathfrak{N}_{\varphi}=\{ x\in M ; \varphi (x^*x)<+\infty\}\;\! ,$
\smallskip

\noindent\hspace{3.94 cm}$\mathfrak{A}_{\varphi}=\mathfrak{N}_{\varphi}\cap
\mathfrak{N}_{\varphi}{\!\!\!}^*\;\! ,$
\smallskip

\noindent\hspace{3.82 cm}$\mathfrak{M}_{\varphi}=\text{linear span of }\mathfrak{N}_{\varphi}{\!\!\!}^*
\mathfrak{N}_{\varphi}\;\! .$
\smallskip

\noindent Then
\smallskip

\centerline{$\mathfrak{M}_{\varphi}\cap M^+=\{ a\in M^+ ;\varphi (a)<+\infty\}\;\! .$}
\smallskip

We notice that, for $\varphi$ a weight on a $W^*$-algebra $M$ and $e\in M$ a projection,
\begin{equation}\label{reduced}
\varphi (1_M-e)=0\;\! \Longrightarrow \varphi (x^*x)=\varphi (e\;\! x^*x\;\! e)\text{ for all }x\in M\;\! .
\end{equation}
Indeed, since $x-x\;\! e=x\;\! (1_M-e)\in \mathfrak{N}_{\varphi}\;\!$, we have $x\in \mathfrak{N}_{\varphi}
\Longleftrightarrow x\;\! e\in \mathfrak{N}_{\varphi}\;\!$. Thus, if $x\notin  \mathfrak{N}_{\varphi}\;\!$,
then $\varphi (x^*x)= +\infty =\varphi (e\;\! x^*x\;\! e)\;\!$. On the other hand, if $x$ and $x\;\! e$ belong
to $\mathfrak{N}_{\varphi}\;\!$, then the Schwarz inequality entails

\begin{equation*}
\begin{split}
\big|\varphi \big( (1_M-e)\;\! x^*x\;\! e\big) \big|^2\leq\;& \varphi \big( (1_M-e)\;\! x^*x\;\! (1_M-e)\big)
\varphi (e\;\! x^*x\;\! e) \\
\leq\;& \| x^*x\| \varphi (1_M-e) \varphi (e\;\! x^*x\;\! e) =0
\end{split}
\end{equation*}
and, similarly, $\varphi \big( e\;\! x^*x\;\! (1_M-e)\big) =0\;\!$. Consequently,
\begin{equation*}
\begin{split}
&\varphi (x^*x) \\
=\;&\varphi (e\;\! x^*x\;\! e) +\varphi \big( (1_M-e)\;\! x^*x\;\! e\big)
+\varphi \big( e\;\! x^*x\;\! (1_M-e)\big) +\varphi \big( (1_M-e)\;\! x^*x\;\! (1_M-e)\big) \\
=\;& \varphi (e\;\! x^*x\;\! e)\;\! .
\end{split}
\end{equation*}

For $\varphi$ a semi-finite, normal weight on a $W^*$-algebra $M$, then $\pi_{\varphi} : M\longrightarrow
B(H_{\varphi})$ denotes its GNS representation, and $x_{\varphi}$ the canonical image of
$x\in\mathfrak{N}_{\varphi}$ in $H_{\varphi}\;\!$.
If $\varphi$

\noindent is also faithful, then $S_{\varphi}$ stands for the closure of the antilinear operator
\smallskip

\centerline{$H_{\varphi}\supset (\mathfrak{A}_{\varphi})_{\varphi}\ni x_{\varphi}
\longmapsto (x^*)_{\varphi}\in H_{\varphi}\;\! ,$}
\smallskip

\noindent $\Delta_{\varphi}=S_{\varphi}^*S_{\varphi}$ for the modular operator of $\varphi\;\!$,
and $J_{\varphi}$ for its modular conjugation, so that $S_{\varphi}=J_{\varphi}\Delta_{\varphi}^{1/2}\;\!$.
Further, $\big(\sigma^{\varphi}_t\big)_{t\in\mathbb{R}}$ will denote the modular automorphism group
of $\varphi\;\!$:

\centerline{$\pi_{\varphi}\big(\sigma^{\varphi}_t(x)\big)=\Delta_{\varphi}^{it}\pi_{\varphi}(x)\Delta_{\varphi}^{-it},
\qquad t\in\mathbb{R}\;\! , x\in M .$}
\medskip

\noindent An element $x\in M$ will be called $\sigma_{\varphi}$-entire if $\mathbb{R}\ni t\longmapsto
\sigma^{\varphi}_t(x)\in M$ has an entire

\noindent extension, denoted $\mathbb{C}\ni z\longmapsto \sigma^{\varphi}_z(x)\in M$, and we consider
the ${}^*$-subalgebras of $M$
\smallskip

\centerline{$M^{\varphi}_{\infty}=\{ x\in M\;\! ;\;\! x\text{ is }\sigma_{\varphi}\text{-entire}\}\;\! ,$}
\smallskip

\centerline{$M^{\varphi}=\{ x\in M\;\! ;\;\! \sigma^{\varphi}_t(x)=x\text{ for all }t\in\mathbb{C}\}
\text{ (centralizer of }\varphi )\;\! ,$}
\smallskip

\centerline{$\mathfrak{T}_{\varphi}=\{ x\in M^{\varphi}_{\infty}\;\! ;\;\! \sigma^{\varphi}_z(x)\in
\mathfrak{A}_{\varphi}\text{ for all }t\in\mathbb{R}\}
\text{ (maximal Tomita algebra of }\varphi )\;\! .$}
\smallskip

\noindent For $x\in \mathfrak{T}_{\varphi}\;\!$, the vector $x_{\varphi}$ belongs to the domain
of each $\Delta_{\varphi}^{iz}\;\! , z\in\mathbb{C}\;\!$, and
\smallskip

\centerline{$\sigma^{\varphi}_z(x)_{\varphi}=\Delta_{\varphi}^{iz} x_{\varphi}\;\! ,\qquad z\in\mathbb{C}$}
\smallskip

\noindent (see e.g. \cite{SZ1}, Sections 10.20 and 10.21).

If $\varphi$ is a not necessarily faithful semi-finite, normal weight on a $W^*$-algebra $M$, then the modular
authomorphisms are considered acting on the reduced algebra $s(\varphi )Ms(\varphi )$ and
$M^{\varphi}_{\infty}\;\!$, $M^{\varphi}$, $\mathfrak{T}_{\varphi}$ will be ${}^*$-subalgebras
of $s(\varphi )Ms(\varphi )\;\!$.
\smallskip

Let $\Omega$ be a locally compact topological space, $\mu$ a Radon measure on $\Omega\;\!$,
$M$ a
\medskip

\noindent $W^*$-algebra, and $F : \Omega\longrightarrow M$ a $w$-continuous function such that
\medskip

\centerline{$\displaystyle \int\limits_{\Omega}\! \| F(\omega )\|\;\!{\rm d}\mu (\omega )<+\infty\;\! .$}

\noindent Then $\displaystyle M_*\ni\varphi\longmapsto \int\limits_{\Omega}\!\varphi\big( F(\omega )\big)
\;\!{\rm d}\mu (\omega )\in\mathbb{C}$ is a bounded linear functional, hence an element of
$(M_*)^*=M$, denoted by $\displaystyle \int\limits_{\Omega}\! F(\omega )\;\!{\rm d}\mu (\omega )\;\!$.
Therefore
\begin{equation}\label{integration}
\varphi\Big( \int\limits_{\Omega}\! F(\omega )\;\!{\rm d}\mu (\omega )\Big)=
 \int\limits_{\Omega}\!\varphi\big( F(\omega )\big)\;\!{\rm d}\mu (\omega )
\end{equation}
for every $\varphi\in M_*\;\!$.
If $F(\Omega )\subset M^+$ then (\ref{integration}) holds also for any normal weight $\varphi$
on $M$ (see \cite{PT}, Lemma 3.1). Indeed, according to \cite{H}, we have
\medskip

\centerline{$\displaystyle \varphi (a)=\sum\limits_{\iota}\varphi_{\iota}(a)\;\! ,\qquad a\in M^+$}
\smallskip

\noindent for some family $\big(\varphi_{\iota}\big)_{\iota}$ of normal positive forms on $M$ and,
taking into account that the monotone convergence theorem for lower semicontinuous positive
functions and regular Borel measures applies to arbitrary upward directed families (see e.g.
\cite{C},

\noindent Proposition 7.4.4), we deduce:
\begin{equation*} 
\begin{split}
\varphi\Big( \int\limits_{\Omega}\! F(\omega )\;\!{\rm d}\mu (\omega )\Big)=\;&
\sum\limits_{\iota}\varphi_{\iota}\Big( \int\limits_{\Omega}\! F(\omega )\;\!{\rm d}\mu (\omega )\Big)
\overset{(\ref{integration})}{=}
\sum\limits_{\iota} \int\limits_{\Omega}\!\varphi_{\iota}\big( F(\omega )\big)\;\!{\rm d}\mu (\omega ) \\
=\;&\int\limits_{\Omega}\! \sum\limits_{\iota} \varphi_{\iota}\big( F(\omega )\big)\;\!{\rm d}\mu (\omega )
= \int\limits_{\Omega}\!\varphi\big( F(\omega )\big)\;\!{\rm d}\mu (\omega )\;\! .
\end{split}
\end{equation*}
We notice also that
\begin{equation}\label{semi-norm-ineq}
p_{\varphi} \Big( \int\limits_{\Omega}\! F(\omega )\;\!{\rm d}\mu (\omega )\Big) \leq
\int\limits_{\Omega}\! p_{\varphi} \big( F(\omega )\big)\;\!{\rm d}\mu (\omega )\;\! ,
\end{equation}
where $\varphi$ is an arbitrary normal positive form on $M$ and $p_{\varphi}$ denotes the
semi-norm $M\ni x\longmapsto \varphi (x^*x)^{1/2}+ \varphi (x\;\! x^*)^{1/2}\;\!$.
Inequality (\ref{semi-norm-ineq}) is consequence of
\begin{equation}\label{semi-norm-ineq-sep}
p_{\varphi ,j} \Big( \int\limits_{\Omega}\! F(\omega )\;\!{\rm d}\mu (\omega )\Big) \leq
\int\limits_{\Omega}\! p_{\varphi ,j} \big( F(\omega )\big)\;\!{\rm d}\mu (\omega )\;\! ,\qquad
j=1\;\! ,2\;\! ,
\end{equation}
where the semi-norms $p_{\varphi ,1}$ and $p_{\varphi ,2}$ on $M$ are defined by
\smallskip

\centerline{$p_{\varphi ,1}(x)=\varphi (x^*x)^{1/2}\;\! ,\;\! p_{\varphi ,2}(x)=\varphi (x\;\! x^*)^{1/2}\;\!,
\qquad x\in M\;\! .$}
\medskip

\noindent Now (\ref{semi-norm-ineq-sep}) easily follows by using (\ref{integration}) and the relations
\medskip

\centerline{$\displaystyle p_{\varphi ,1}(x)=\!\! \sup\limits_{\substack{ y\in M \\ \varphi (y^*y)\leq 1}}\!\!
{\rm Re}\,\varphi (y^*x)\;\! ,\; p_{\varphi ,2}(x)=\!\! \sup\limits_{\substack{ y\in M \\ \varphi (y\;\! y^*)\leq 1}}\!\!
{\rm Re}\,\varphi (x\;\! y^*)\;\! ,\qquad x\in M\;\! .$}
\medskip

Let $\varphi$ be a faithful, semi-finite, normal weight on a $W^*$-algebra $M$.
Defining, for $a\in (M^{\varphi})^+$, the semi-finite, normal weight ${\varphi}_a$ on $M$ by
\begin{equation}\label{derived}
{\varphi}_a(x)=\varphi (a^{1/2}xa^{1/2})\;\! ,\qquad x\in M^+\;\! ,
\end{equation}
if $A$ is a positive, self-adjoint operator affiliated to $M^{\varphi}$, then the formula
\medskip

\centerline{$\displaystyle {\varphi}_A(x)=\sup\limits_{\varepsilon >0}{\varphi}_{A(1+\varepsilon A)^{-1}}(x)
=\lim\limits_{0<\varepsilon\to 0}{\varphi}_{A(1+\varepsilon A)^{-1}}(x)\;\! ,\qquad x\in M^+$}

\noindent defines a $\sigma^{\varphi}$-invariant, semi-finite, normal weight ${\varphi}_A$
on $M$ and
\begin{equation*}
\sigma^{{\varphi}_A}_t(x)=A^{it}{\sigma}^{\varphi}_t(x)A^{-it}\;\! ,\qquad
t\in\mathbb{R}\;\! ,x\in s(A)Ms(A)
\end{equation*}
(see \cite{PT}, Section 4). We notice that the notation is not contradictory because for

\noindent bounded $A$ the above definition yields ${\varphi}_A(x)=\varphi (A^{1/2}xA^{1/2})$
for all $x\in M^+$.

Actually formula (\ref{derived}) defines a normal weight $\varphi_a$ for any normal weight
$\varphi$ on $M$ and any $a\in M^+$,
but if $\varphi$ is not semi-finite and
$a$ does not belong to $M^{\varphi}$, we cannot be sure that the weight $\varphi_a$
is semi-finite.

If $\varphi$ is a faithful, semi-finite, normal weight on a $W^*$-algebra $M$, and $A\;\! ,B$
are commuting positive, self-adjoint operators affiliated to $M^{\varphi}$, such that $s(B)\leq s(A)\;\!$,
then $B$ is affiliated also to $M^{{\varphi}_A}$ and $({\varphi}_A)_B={\varphi}_{\overline{AB}}$
(see \cite{PT}, Proposition 4.3).
\smallskip

Similarly as in the case of weights, if $N$ is a $W^*$-subalgebra of a $W^*$-algebra $M$, and
$E : M^+\longrightarrow \overline{N}\;\!^+$ an operator valued weight, then we use the notations
\begin{equation*}
\begin{split}
&\mathfrak{N}_{E}=\{ x\in M ; E (x^*x)\in N^+\}\;\! , \\
&\mathfrak{M}_{E}=\text{linear span of }\mathfrak{N}_{E}{\!\!\!\!}^*\,\mathfrak{N}_{E}
\end{split}
\end{equation*}
and notice that
\smallskip

\centerline{$\mathfrak{M}_{E}\cap M^+=\{ a\in M^+ ;E (a)\in N^+\}\;\! .$}
\medskip

Statement (\ref{reduced}) holds true also for operator valued weights : If
$E : M^+\longrightarrow \overline{N}\;\!^+$ is an operator valued weight and $e\in M$
is a projetion, then
\begin{equation}\label{reduced-oper.val.}
E(1_M-e)=0\;\!\Longrightarrow E(x^*x)=E(e\;\! x^*x\;\! e)\text{ for all }x\in M\;\! .
\end{equation}
Indeed,
by the very definition of the extended positive part of a $W^*$-algebra (\cite{H1},
Definition 1.1) and of operator valued weights (\cite{H1}, Definition 2.1), the equality
of the operator valued weights $E$ and $E(e\;\!\cdot\;\! e)$ means the equality
\smallskip

\centerline{$\varphi\big( E(x^*x)\big) =\varphi\big( E(e\;\! x^*x\;\! e)\big)\text{ for all }x
\in M\text{ and all }\varphi\in M_*^+\;\! ,$}
\smallskip

\noindent that is the equality, for every $\varphi\in M_*^+\;\!$, of the weights $\varphi\circ E$
and $(\varphi\circ E)(e\;\!\cdot\;\! e)\;\!$. But this is an imediate consequence of (\ref{reduced}).
\smallskip

The support $s(E)$ of a normal operator valued weight
$E : M^+\longrightarrow \overline{N}\;\!^+$ is $1_M-q\;\!$, where $q$ is the greatest
projection in $M$ satisfying $E(q)=0\;\!$, and it belongs to the

\noindent relative commutant $N'\cap M$
(see \cite{H1}, Definition 2.8).

If $E : M^+\longrightarrow \overline{N}\;\!^+$ is a faithful, semi-finite, normal operator valued
weight, then
\begin{itemize}
\item $E(\mathfrak{M}_{E})$ is a $w$-dense two-sided ideal in $N$ (\cite{H1}, Proposition 2.5).
\item For any faithful, semi-finite, normal weight $\varphi$ on $N$, the composition $\varphi\circ E$
is a faithful, semi-finite, normal weight on $M$ (\cite{H1}, Proposition 2.3) and
\smallskip

\centerline{$\qquad\qquad E\big(\sigma^{\varphi\circ E}_t(a)\big) =
\sigma^{\varphi}_t\big( E(a)\big)\;\! ,\qquad a\in M^+, t\in\mathbb{R}$}
\smallskip

\noindent (\cite{H1}, Proposition 4.9). In particular, $\sigma^{\varphi\circ E}_t(\mathfrak{M}_E)=
\mathfrak{M}_E$ for every $t\in\mathbb{R}\;\!$.
\end{itemize}
Moreover, $\varphi\circ E$ determines uniquely $E$ in the following sense:
\begin{itemize}
\item If $E_1 ,E_2 : M^+\longrightarrow \overline{N}\;\!^+$ are faithful, semi-finite, normal weights such that

\noindent $\varphi\circ E_1=\varphi\circ E_2$ for some faithful, semi-finite, normal weight $\varphi$ on $N$,
then $E_1=E_2$ (\cite{H1}, Lemma 4.8).
\end{itemize}




\section{Regularizing nets}

Let $M$ be a $W^*$-algebra, and $\varphi$ a faithful, semi-finite, normal weight on $M$.
We will call (slightly differently as in \cite{Z2}, \S 1), {\it regularizing net} for $\varphi$
any net $(u_{\iota})_{\iota}$ in $\mathfrak{T}_{\varphi}$ such that
\begin{itemize}
\item [(i)] $\sup\limits_{\substack{ z\in K \\ \iota }}\|\sigma^{\varphi}_z(u_{\iota})\|
<+\infty$ and $\sup\limits_{\substack{ z\in K \\ \iota }}\|\sigma^{\varphi}_z(u_{\iota})_{\varphi}\|
<+\infty$ for each compact $K\subset\mathbb{C}\;\!$;
\item[(ii)] $\sigma^{\varphi}_z(u_\iota) \xrightarrow{\;\; \iota\;\,}1_M$ in the $s^*$-topology
for all $z\in\mathbb{C}\;\!$.
\end{itemize}

Regularizing nets are useful in the modular theory of faithful, semi-finite, normal weights.
Usually they are constructed starting with a bounded net $(x_\iota )_{\iota}$ in $\mathfrak{A}_{\varphi}$
such that $x_{\iota} \xrightarrow{\;\; \iota\;\,}1_M$ in the $s^*$-topology and then getting it
"mollified", for example, by the mollifier $e^{-t^2}$, that is passing to the net $(u_\iota )_{\iota}$
with
\begin{equation}\label{mollify}
u_{\iota} =\frac 1{\;\!\sqrt{\pi}\;\!}\! \int\limits_{-\infty}^{+\infty}\!\!
e^{-t^2}\! {\sigma}^{\varphi}_t(x_{\iota})\;\!{\rm d}t\;\! .
\end{equation}
The verification of (i) is straightforward, more troublesome is to verify the inclusion
$u_{\iota}\in \mathfrak{T}_{\varphi}$ and the convergence (ii).

Concerning the verification of (ii),
if the net $(x_\iota )_{\iota}$ would be increasing, we could proceed as in the proof of
\cite{PT}, Lemma 5.2 by using Dini's theorem. But there are situations in which we cannot
restrict us to the case of increasing $(x_\iota )_{\iota}\;\!$.
For example,  it is not clear whether every
$s^*$-dense, $\sigma^{\varphi}$-invariant (not necessarily hereditary) ${}^*$-subalgebra of $M$
contains some increasing net $(x_\iota )_{\iota}$ with $x_{\iota} \xrightarrow{\;\; \iota\;\,}1_M$ in
the $s^*$-topology, as used in the proof of \cite{PT}, Lemma 5.2.

On the other hand, if the net $(x_\iota )_{\iota}$ would be 
a sequence, then we could use the dominated convergence theorem of Lebesgue.
similarly as, for example, in the proof of \cite{St}, Theorem 2.16.
But again, unless $M$ is countably decomposable (and so its unit ball $s^*$-metrizable),
the unit ball of not every $s^*$-dense ${}^*$-subalgebra of $M$ contains a sequence
$s^*$-convergent to $1_M\;\!$.

Therefore, in order to cover also the case of non countable nets $(x_\iota )_{\iota}\;\!$, we have
to verify (ii) directly, taking advantage of the particularities of the situation.

In this section we will prove that, starting with a bounded net $(x_\iota )_{\iota}$ even in
$\mathfrak{N}_{\varphi}\;\!$, formula (\ref{mollify}) furnishes a regularizing net $(u_\iota )_{\iota}\;\!$.

Let us begin with recalling some facts concerning the modular theory of faithful, semi-finite, normal
weights.
The next lemma is \cite{AZ}, (2.27) :

\begin{lemma}\label{integral}
Let $\varphi$ be a faithful, semi-finite, normal weight on a $W^*$-algebra $M$.

\noindent If $x\in\mathfrak{N}_{\varphi}$ and $f\in L^1(\mathbb{R})\;\!$, then
\smallskip

\centerline{$\displaystyle \int\limits_{-\infty}^{+\infty}\! f(t) \sigma^{\varphi}_t(x)\;\!{\rm d}t
\in\mathfrak{N}_{\varphi}\text{ and }\Big(\! \int\limits_{-\infty}^{+\infty}\! f(t)
\sigma^{\varphi}_t(x)\;\!{\rm d}t\Big)_{\! \varphi}\! =\! \int\limits_{-\infty}^{+\infty}\! f(t)
\Delta_{\varphi}^{it} x_{\varphi}\;\!{\rm d}t\;\! .$}
\end{lemma}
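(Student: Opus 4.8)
The plan is to reduce the statement to the known behaviour of $\mathfrak{N}_{\varphi}$ under the Tomita algebra and to the closedness of the map $x_{\varphi}\mapsto$ (its image), using an approximation of $f$ by nice functions. First I would observe that it suffices to treat $f$ with compactly supported, smooth structure in the Fourier picture: since $\mathfrak{N}_{\varphi}$ and the GNS map are compatible with $\sigma^{\varphi}_t$, and since $\|\sigma^{\varphi}_t(x)_{\varphi}\|=\|x_{\varphi}\|$ for all $t$, the map $\mathbb{R}\ni t\mapsto \sigma^{\varphi}_t(x)_{\varphi}=\Delta_{\varphi}^{it}x_{\varphi}\in H_{\varphi}$ is norm-continuous and uniformly bounded, so the vector-valued integral $\eta_f:=\int f(t)\Delta_{\varphi}^{it}x_{\varphi}\,\mathrm{d}t$ is well defined for $f\in L^1(\mathbb{R})$ and satisfies $\|\eta_f\|\le \|f\|_1\|x_{\varphi}\|$. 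Likewise $y_f:=\int f(t)\sigma^{\varphi}_t(x)\,\mathrm{d}t$ is a well-defined element of $M$ in the sense of \eqref{integration}, with $\|y_f\|\le\|f\|_1\|x\|$. The assertion is then that $y_f\in\mathfrak{N}_{\varphi}$ and $(y_f)_{\varphi}=\eta_f$.

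Next I would prove it first for $f$ whose Fourier transform $\hat f$ has compact support (equivalently, $f$ is the restriction of an entire function of exponential type lying in $L^1$). For such $f$, the function $z\mapsto\int f(t)\sigma^{\varphi}_t(x)\,\mathrm{d}t$ formally ``wants'' to be $\sigma^{\varphi}$-entire; the honest statement is that $y_f\in\mathfrak{N}_{\varphi}$ with $(y_f)_{\varphi}=\int f(t)\Delta_{\varphi}^{it}x_{\varphi}\,\mathrm{d}t$, which one checks by testing against $z_{\varphi}$ for $z\in\mathfrak{A}_{\varphi}$ (or better $z\in\mathfrak{T}_{\varphi}$): compute $(z^*y_f)_{\varphi}$-type pairings using \eqref{integration} applied to the normal form $\omega$ implementing the inner product, move the integral outside, and identify the limit using that $t\mapsto\Delta_{\varphi}^{it}x_{\varphi}$ is continuous. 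The point of the compact-spectrum hypothesis is that it lets me use a resolvent/entire-extension argument to land genuinely inside $\mathfrak{N}_{\varphi}$ rather than merely in its closure; this mirrors the classical construction of the Tomita algebra via $\int e^{-t^2}\sigma^{\varphi}_t(\cdot)\,\mathrm{d}t$, and indeed the Gaussian case \eqref{mollify} is the prototype.

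Then I would pass from the dense class to general $f\in L^1(\mathbb{R})$ by approximation, and here is where the one genuinely delicate point sits. Choose $f_n\to f$ in $L^1(\mathbb{R})$ with each $f_n$ in the good class. Then $y_{f_n}\to y_f$ in norm (hence $w$) and $\eta_{f_n}\to\eta_f$ in $H_{\varphi}$. The GNS map $\mathfrak{N}_{\varphi}\ni y\mapsto y_{\varphi}$ is closed with respect to, say, $\sigma$-weak convergence on bounded sets versus norm convergence of the images — this is a standard fact about lower semicontinuous weights (the quadratic form $y\mapsto\|y_{\varphi}\|^2=\varphi(y^*y)$ is $w$-lower semicontinuous, and the graph of the GNS map is $(w\times\text{norm})$-closed). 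Since $(y_{f_n})_{\varphi}=\eta_{f_n}$ converges in norm to $\eta_f$, closedness forces $y_f\in\mathfrak{N}_{\varphi}$ and $(y_f)_{\varphi}=\eta_f$, which is exactly the claim. The main obstacle is making this closedness argument airtight: one must be careful that $w$-convergence of $y_{f_n}$ to $y_f$, together with norm-convergence (equivalently Cauchy-ness) of $(y_{f_n})_{\varphi}$, really does certify $y_f\in\mathfrak{N}_{\varphi}$ — this is where the lower semicontinuity / closed-map property of a normal weight is indispensable, and it is the only step that is not a routine manipulation of integrals. Everything else (Fubini-type interchanges, norm estimates, continuity of $t\mapsto\Delta_{\varphi}^{it}x_{\varphi}$) is bookkeeping.
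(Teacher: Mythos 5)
The paper does not prove this lemma at all: it is quoted verbatim from [AZ], (2.27). The proof it relies on is a one\textendash step application of the commutant\textendash boundedness criterion for membership in $\mathfrak{N}_{\varphi}$ (the same [AZ], Lemma 2.6 (1) that is invoked in the proof of Lemma \ref{the-adjoint} above): with $u=\int f(t)\,\sigma^{\varphi}_t(x)\,{\rm d}t$ and $\eta=\int f(t)\,\Delta_{\varphi}^{it}x_{\varphi}\,{\rm d}t$, the identity $\pi_{\varphi}(a)J_{\varphi}y_{\varphi}=J_{\varphi}\pi_{\varphi}(y)J_{\varphi}a_{\varphi}$ for $a\in\mathfrak{N}_{\varphi}$, $y\in\mathfrak{M}_{\varphi}$, applied under the integral sign, gives $\pi_{\varphi}(u)J_{\varphi}y_{\varphi}=J_{\varphi}\pi_{\varphi}(y)J_{\varphi}\eta$, hence $\|\pi_{\varphi}(u)J_{\varphi}y_{\varphi}\|\leq\|y\|\,\|\eta\|$, and the criterion yields $u\in\mathfrak{N}_{\varphi}$ and $u_{\varphi}=\eta$ simultaneously, for arbitrary $f\in L^1(\mathbb{R})$ and with no approximation.

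Your proposal has a genuine gap at the base case of the approximation scheme. For $f$ with compactly supported Fourier transform you claim a ``resolvent/entire\textendash extension argument'' puts $y_f$ ``genuinely inside $\mathfrak{N}_{\varphi}$'', but compact support of $\hat f$ only makes $y_f$ analytic for $\sigma^{\varphi}$, and analyticity has nothing to do with membership in $\mathfrak{N}_{\varphi}$ (the unit $1_M$ is $\sigma^{\varphi}$\textendash entire and lies in $\mathfrak{N}_{\varphi}$ only when $\varphi$ is bounded). The pairing computation you sketch identifies $(y_f)_{\varphi}$ with $\eta_f$ only \emph{after} membership is known; to obtain membership you need a converse boundedness criterion such as [AZ], Lemma 2.6 (1) --- and once you use that, the restriction on $\hat f$ does no work and general $f\in L^1(\mathbb{R})$ is handled directly, making the whole approximation superfluous. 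Your step 3 is also justified too loosely: $w$\textendash lower semicontinuity of $\varphi$ on $M^+$ does not by itself give $(w\times{\rm norm})$\textendash closedness of the GNS map, since $y_n\to y$ weakly does not imply $y_n^*y_n\to y^*y$ weakly; the closedness of $\Lambda_{\varphi}$ is true but is itself a theorem, proved via $\varphi=\sup_{\iota}\omega_{\iota}$ and the associated operators in $\pi_{\varphi}(M)'$. If you insist on the approximation route, the repair is to take as dense class the Riemann sums $\sum_k c_k\,\sigma^{\varphi}_{t_k}(x)$, whose membership in $\mathfrak{N}_{\varphi}$ is trivial and whose images are $\sum_k c_k\Delta_{\varphi}^{it_k}x_{\varphi}$; but these converge to $y_f$ only in the $w$\textendash topology (as $t\mapsto\sigma^{\varphi}_t(x)$ is not norm continuous), so the $\sigma$\textendash weak/norm closedness of $\Lambda_{\varphi}$ must carry the entire load, with the $L^1$\textendash density step reserved for the final passage from continuous compactly supported $f$ to general $f$.
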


\hfill $\square$
\smallskip

Let $\varphi$ be a faithful, semi-finite, normal weight on a $W^*$-algebra $M$,
and $z\in\mathbb{C}\;\!$.

We define the linear operator $\sigma^{\varphi}_z : M\supset
\mathcal{D}(\sigma^{\varphi}_z)\ni x\longmapsto \sigma^{\varphi}_z(x)\in M$ as follows :
the pair $\big( x\;\! ,\sigma^{\varphi}_z(x)\big)$ belongs to its graph whenever the
map $\mathbb{R}\ni t\longmapsto \sigma^{\varphi}_t(x)\in M$ has a $w$-continuous
extension on the closed strip
\smallskip

\centerline{$\big\{ \zeta\in\mathbb{C} ; 0\leq |{\rm Im}\zeta |\leq |{\rm Im}z |\;\! ,
({\rm Im}\zeta )({\rm Im}z)\geq 0\big\}\;\! ,$}
\smallskip

\noindent analytic in the interior and taking the value $\sigma^{\varphi}_z(x)$ at $z\;\!$.

It is easily seen (see e.g. \cite{Z1}, Theorem 1.6) that, for each $z\in\mathbb{C}\;\!$,
\begin{equation}\label{star}
\mathcal{D}(\sigma^{\varphi}_z)^*=
\mathcal{D}(\sigma^{\varphi}_{\overline{z}})\text{ and }\sigma^{\varphi}_{\overline{z}}(x^*)=
\sigma^{\varphi}_z(x)^*\text{ for every }x\in \mathcal{D}(\sigma^{\varphi}_z)\;\! .
\end{equation}

We recall that $x\in M$ belongs to $\mathcal{D}(\sigma^{\varphi}_z)$ if and only if the operator
$\Delta_{\varphi}^{iz}\pi_{\varphi}(x)\Delta_{\varphi}^{-iz}$ is defined and bounded on a core of
$\Delta_{\varphi}^{-iz}\;\!$, in which case
\smallskip

\centerline{$\mathcal{D}\big( \Delta_{\varphi}^{iz}\pi_{\varphi}(x)\Delta_{\varphi}^{-iz} \big) =
\mathcal{D}(\Delta_{\varphi}^{-iz})\text{ and }\overline{\Delta_{\varphi}^{iz}\pi_{\varphi}(x)\Delta_{\varphi}^{-iz}}
=\pi_{\varphi}\big( \sigma^{\varphi}_z(x)\big)$}
\smallskip

\noindent that is
\begin{equation}\label{implement}
\pi_{\varphi}(x)\Delta_{\varphi}^{-iz}\subset \Delta_{\varphi}^{-iz}\pi_{\varphi}\big( \sigma^{\varphi}_z(x)\big)
\end{equation}
\noindent (see \cite{CZ}, Theorem 6.2 or \cite{AZ}, Theorem 2.3).
\smallskip

Now we prove a criterion for an element of $\mathfrak{N}_{\varphi}$ to belong to
$\mathfrak{N}_{\varphi}{\!\!\!}^*$, hence to $\mathfrak{A}_{\varphi}\;\!$:

\begin{lemma}\label{the-adjoint}
Let $\varphi$ be a faithful, semi-finite, normal weight on a $W^*$-algebra $M$,

\noindent and $x\in\mathfrak{N}_{\varphi}\;\!$. Then
\smallskip

\centerline{$x\in\mathcal{D}(\sigma^{\varphi}_{-i/2})\text{ and }\sigma^{\varphi}_{-i/2}(x)\in
\mathfrak{N}_{\varphi} \Longrightarrow
x\in \mathfrak{N}_{\varphi}{\!\!\!}^*\text{ and }\sigma^{\varphi}_{-i/2}(x)_{\varphi}=
\Delta_{\varphi}^{1/2} x_{\varphi}\;\! ,$}

\noindent\hspace{6.64 cm}that is

\noindent\hspace{6.64 cm}$x\in\mathfrak{A}_{\varphi}\text{ and }S_{\varphi} x_{\varphi}=
J_{\varphi} \sigma^{\varphi}_{-i/2}(x)_{\varphi}\;\! .$
\end{lemma}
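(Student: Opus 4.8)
The plan is to use the analytic characterization of $\sigma^{\varphi}_{z}$ via the modular operator, recorded in \eqref{implement}, together with Lemma \ref{integral}, to compute $S_{\varphi}x_{\varphi}$ directly. Suppose $x\in\mathfrak{N}_{\varphi}$, $x\in\mathcal{D}(\sigma^{\varphi}_{-i/2})$, and $\sigma^{\varphi}_{-i/2}(x)\in\mathfrak{N}_{\varphi}$. By \eqref{star} the hypothesis gives $x^{*}\in\mathcal{D}(\sigma^{\varphi}_{i/2})$ with $\sigma^{\varphi}_{i/2}(x^{*})=\sigma^{\varphi}_{-i/2}(x)^{*}$; combined with \eqref{implement} applied to $z=i/2$ we get $\pi_{\varphi}(x^{*})\Delta_{\varphi}^{1/2}\subset\Delta_{\varphi}^{1/2}\pi_{\varphi}\big(\sigma^{\varphi}_{-i/2}(x)^{*}\big)$. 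The first main step is to show $x_{\varphi}\in\mathcal{D}(\Delta_{\varphi}^{1/2})$, i.e. $x_{\varphi}\in\mathcal{D}(S_{\varphi})$, and that $S_{\varphi}x_{\varphi}=J_{\varphi}\Delta_{\varphi}^{1/2}x_{\varphi}=J_{\varphi}\sigma^{\varphi}_{-i/2}(x)_{\varphi}$; once this is known, the relation $S_{\varphi}x_{\varphi}=(x^{*})_{\varphi}$ holds \emph{by definition} of $S_{\varphi}$ provided $x\in\mathfrak{A}_{\varphi}$, but a priori we only know $x\in\mathfrak{N}_{\varphi}$, so membership $x^{*}\in\mathfrak{N}_{\varphi}$ must be extracted from the analysis rather than assumed.

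To make the first step concrete I would exploit the smoothing device already present in the section. Fix a sequence (or one-parameter family) of Gaussian mollifiers: for $n\in\mathbb{N}$ set
\[
x_{n}=\frac{n}{\sqrt{\pi}}\int_{-\infty}^{+\infty}e^{-n^{2}t^{2}}\sigma^{\varphi}_{t}(x)\,{\rm d}t .
\]
Each $x_{n}$ is $\sigma_{\varphi}$-entire; by Lemma \ref{integral}, $x_{n}\in\mathfrak{N}_{\varphi}$ with $(x_{n})_{\varphi}=\frac{n}{\sqrt{\pi}}\int e^{-n^{2}t^{2}}\Delta_{\varphi}^{it}x_{\varphi}\,{\rm d}t$, and $(x_{n})_{\varphi}\to x_{\varphi}$ in $H_{\varphi}$ because $t\mapsto\Delta_{\varphi}^{it}x_{\varphi}$ is norm-continuous and bounded and the Gaussians form an approximate identity. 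Since $x_{n}$ is entire, $x_{n}\in\mathfrak{A}_{\varphi}$ with $S_{\varphi}(x_{n})_{\varphi}=(x_{n}^{*})_{\varphi}=J_{\varphi}\Delta_{\varphi}^{1/2}(x_{n})_{\varphi}=J_{\varphi}\,\sigma^{\varphi}_{-i/2}(x_{n})_{\varphi}$, and $\sigma^{\varphi}_{-i/2}(x_{n})$ is obtained from $\sigma^{\varphi}_{-i/2}(x)$ by the same Gaussian convolution (shift the contour / use that $\sigma^{\varphi}_{t}$ commutes with $\sigma^{\varphi}_{-i/2}$ on $\mathcal{D}(\sigma^{\varphi}_{-i/2})$). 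Applying Lemma \ref{integral} to $\sigma^{\varphi}_{-i/2}(x)\in\mathfrak{N}_{\varphi}$ shows $\sigma^{\varphi}_{-i/2}(x_{n})_{\varphi}=\frac{n}{\sqrt{\pi}}\int e^{-n^{2}t^{2}}\Delta_{\varphi}^{it}\,\sigma^{\varphi}_{-i/2}(x)_{\varphi}\,{\rm d}t\to\sigma^{\varphi}_{-i/2}(x)_{\varphi}$ in $H_{\varphi}$. Hence $S_{\varphi}(x_{n})_{\varphi}=J_{\varphi}\sigma^{\varphi}_{-i/2}(x_{n})_{\varphi}\to J_{\varphi}\sigma^{\varphi}_{-i/2}(x)_{\varphi}$; since $S_{\varphi}$ is closed and $(x_{n})_{\varphi}\to x_{\varphi}$, we conclude $x_{\varphi}\in\mathcal{D}(S_{\varphi})$ and $S_{\varphi}x_{\varphi}=J_{\varphi}\sigma^{\varphi}_{-i/2}(x)_{\varphi}$, equivalently $\Delta_{\varphi}^{1/2}x_{\varphi}=\sigma^{\varphi}_{-i/2}(x)_{\varphi}$.

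It remains to deduce $x\in\mathfrak{A}_{\varphi}$, i.e. $x^{*}\in\mathfrak{N}_{\varphi}$. Here I would use that $(\mathfrak{A}_{\varphi})_{\varphi}$ is a core for $S_{\varphi}$ and that the canonical map $\mathfrak{N}_{\varphi}\to H_{\varphi}$ is $\sigma$-weak/weak closed: since $x_{n}\in\mathfrak{A}_{\varphi}$, $x_{n}\to x$ $\sigma$-strongly (standard property of the Gaussian regularization, as in the remarks of this section) and $(x_{n}^{*})_{\varphi}=S_{\varphi}(x_{n})_{\varphi}\to S_{\varphi}x_{\varphi}$ converges in $H_{\varphi}$; by the closedness of the graph $\{(y,y_{\varphi}):y\in\mathfrak{N}_{\varphi}\}$ under such limits we get $x^{*}\in\mathfrak{N}_{\varphi}$ with $(x^{*})_{\varphi}=S_{\varphi}x_{\varphi}$. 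Thus $x\in\mathfrak{A}_{\varphi}$ and $S_{\varphi}x_{\varphi}=(x^{*})_{\varphi}=J_{\varphi}\sigma^{\varphi}_{-i/2}(x)_{\varphi}$, which is the asserted conclusion. The main obstacle I anticipate is the bookkeeping around the contour shift, namely verifying carefully that $\sigma^{\varphi}_{-i/2}(x_{n})$ equals the Gaussian convolution of $\sigma^{\varphi}_{-i/2}(x)$ and that Lemma \ref{integral} legitimately applies to the latter; this uses the analyticity on the strip $0\le -{\rm Im}\,\zeta\le 1/2$ guaranteed by $x\in\mathcal{D}(\sigma^{\varphi}_{-i/2})$, plus uniform bounds on $\sigma^{\varphi}_{\zeta}(x)$ on that strip coming from the three-lines lemma. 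The closedness-of-the-GNS-map argument in the last step is routine but should be stated, since it is exactly what upgrades $\mathfrak{N}_{\varphi}$-membership to $\mathfrak{A}_{\varphi}$-membership.
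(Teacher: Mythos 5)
Your overall strategy (Gaussian mollification, then closedness of $S_{\varphi}$ and of the GNS map) is genuinely different from the paper's, but it contains a circularity at the decisive point. The step you need most is the assertion ``since $x_{n}$ is entire, $x_{n}\in\mathfrak{A}_{\varphi}$ with $S_{\varphi}(x_{n})_{\varphi}=(x_{n}^{*})_{\varphi}$.'' Being $\sigma_{\varphi}$-entire and lying in $\mathfrak{N}_{\varphi}$ (even with all $\sigma^{\varphi}_{z}(x_{n})\in\mathfrak{N}_{\varphi}$, which Lemma \ref{integral} does give you) does \emph{not} by itself yield $x_{n}^{*}\in\mathfrak{N}_{\varphi}$: that implication is exactly the content of the lemma you are proving, applied to $x_{n}$ in place of $x$. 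In the paper's logical order, the fact that the Gaussian mollification of an element of $\mathfrak{N}_{\varphi}$ lands in $\mathfrak{A}_{\varphi}$ (indeed in $\mathfrak{T}_{\varphi}$) is Lemma \ref{regularize}, whose proof explicitly invokes Lemma \ref{the-adjoint}; you cannot use it here. You also cannot reach $x_{n}^{*}\in\mathfrak{N}_{\varphi}$ by applying Lemma \ref{integral} to $x_{n}^{*}=\frac{n}{\sqrt{\pi}}\int e^{-n^{2}t^{2}}\sigma^{\varphi}_{t}(x^{*})\,{\rm d}t$, since that would require $x^{*}\in\mathfrak{N}_{\varphi}$ — the very thing to be proved. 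Consequently your final step (closedness of the graph of the GNS map applied to $(x_{n}^{*})_{\varphi}$) has no starting point, and the $\mathfrak{A}_{\varphi}$-membership claim is not established.

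The first half of your argument can be salvaged without that claim: $(x_{n})_{\varphi}$ is an entire analytic vector for $\Delta_{\varphi}^{it}$, hence lies in $\mathcal{D}(\Delta_{\varphi}^{1/2})=\mathcal{D}(S_{\varphi})$ with $S_{\varphi}(x_{n})_{\varphi}=J_{\varphi}\Delta_{\varphi}^{1/2}(x_{n})_{\varphi}$ directly from $S_{\varphi}=J_{\varphi}\Delta_{\varphi}^{1/2}$, and the contour shift (using the Phragm\'en--Lindel\"of bound on the strip coming from $x\in\mathcal{D}(\sigma^{\varphi}_{-i/2})$) together with Lemma \ref{integral} applied to $\sigma^{\varphi}_{-i/2}(x)\in\mathfrak{N}_{\varphi}$ identifies $\Delta_{\varphi}^{1/2}(x_{n})_{\varphi}$ with the Gaussian regularization of $\sigma^{\varphi}_{-i/2}(x)_{\varphi}$; closedness of $S_{\varphi}$ then gives $\Delta_{\varphi}^{1/2}x_{\varphi}=\sigma^{\varphi}_{-i/2}(x)_{\varphi}$. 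What is irreducibly missing is the passage from this to $x^{*}\in\mathfrak{N}_{\varphi}$. The paper gets it by a direct computation showing $\|\pi_{\varphi}(x^{*})J_{\varphi}y_{\varphi}\|\le\|\sigma^{\varphi}_{-i/2}(x)_{\varphi}\|\cdot\|y\|$ for all $y\in\mathfrak{M}_{\varphi}$ (using \eqref{star} and \eqref{implement}) and then invoking \cite{AZ}, Lemma 2.6 (1), which converts precisely such a boundedness estimate into membership in $\mathfrak{N}_{\varphi}{\!}^{*}$; some result of this type (or the fullness of the left Hilbert algebra) is unavoidable, and your proposal supplies no substitute for it. If you insert such an estimate in place of the circular step, your mollification route goes through; as written, it does not.
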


\begin{proof}
Let $y\in\mathfrak{M}_{\varphi}$ be arbitrary. Then
\begin{equation}\label{first}
\pi_{\varphi}(x^*)J_{\varphi} y_{\varphi}=\pi_{\varphi}(x^*)J_{\varphi} \big( S_{\varphi}(y^*)_{\varphi}\big)
=\pi_{\varphi}(x^*)\Delta_{\varphi}^{1/2} (y^*)_{\varphi}
\end{equation}

\noindent Application of (\ref{star}) with $\displaystyle z= -\;\! \frac{i}2$ yields
$x^*\in\mathcal{D}(\sigma^{\varphi}_{i/2})$ and $\sigma^{\varphi}_{i/2}(x^*)=\sigma^{\varphi}_{-i/2}(x)^*$,

\noindent so, applying (\ref{implement}) to $x^*$ and $\displaystyle z= \frac{i}2\;\!$, we deduce
\begin{equation}\label{second}
\pi_{\varphi}(x^*)\Delta_{\varphi}^{1/2}\subset \Delta_{\varphi}^{1/2}
\pi_{\varphi}\big( \sigma^{\varphi}_{i/2}(x^*)\big) =\Delta_{\varphi}^{1/2}
\pi_{\varphi}\big( \sigma^{\varphi}_{-i/2}(x)^*\big)\;\! .
\end{equation}
By (\ref{first}) and (\ref{second}) we conclude :
\begin{equation*}
\begin{split}
\pi_{\varphi}(x^*)J_{\varphi} y_{\varphi}=\;&\Delta_{\varphi}^{1/2}\pi_{\varphi}\big( \sigma^{\varphi}_{-i/2}(x)^*\big)
(y^*)_{\varphi} =\Delta_{\varphi}^{1/2}\big( \sigma^{\varphi}_{-i/2}(x)^*y^*\big)_{\varphi} \\
=\;&J_{\varphi}S_{\varphi} \Big(\big( y\;\! \sigma^{\varphi}_{-i/2}(x)\big)^*\Big)_{\varphi}
=J_{\varphi} \big( y\;\! \sigma^{\varphi}_{-i/2}(x)\big)_{\varphi} \\
=\;&J_{\varphi} \pi_{\varphi}(y)  \sigma^{\varphi}_{-i/2}(x)_{\varphi}\;\! .
\end{split}
\end{equation*}

By the aboves
\smallskip

\centerline{$\| \pi_{\varphi}(x^*)J_{\varphi} y_{\varphi}\|\leq\| \sigma^{\varphi}_{-i/2}(x)_{\varphi}\|\cdot \| y\|\;\! ,
\qquad y\in\mathfrak{M}_{\varphi}\;\! ,$}
\smallskip

\noindent so we can apply \cite{AZ}, Lemma 2.6 (1) to deduce that $x^*\in\mathfrak{N}_{\varphi}
\Longleftrightarrow x\in\mathfrak{N}_{\varphi}{\!\!\!}^*\;\!$.
\smallskip

Taking into account that $x\in\mathfrak{A}_{\varphi}$ and $y\in \mathfrak{M}_{\varphi}\subset
\mathfrak{A}_{\varphi}\;\!$, and using \cite{AZ}, (2.5), as well as the above
(\ref{implement}) with $\displaystyle z= -\;\! \frac{i}2\;\!$, we deduce :
\begin{equation*}
\begin{split}
\pi_{\varphi}(y) J_{\varphi} \sigma^{\varphi}_{-i/2}(x)_{\varphi} =\;&
J_{\varphi} \pi_{\varphi}\big(\sigma^{\varphi}_{-i/2}(x)\big) J_{\varphi} y_{\varphi} =
J_{\varphi} \pi_{\varphi}\big(\sigma^{\varphi}_{-i/2}(x)\big) J_{\varphi} S_{\varphi} (y^*)_{\varphi} \\
=\;&J_{\varphi} \pi_{\varphi}\big(\sigma^{\varphi}_{-i/2}(x)\big) \Delta_{\varphi}^{1/2} (y^*)_{\varphi}
=J_{\varphi} \Delta_{\varphi}^{1/2} \pi_{\varphi}(x) (y^*)_{\varphi} \\
=\;&S_{\varphi} (x\;\! y^*)_{\varphi} =(y\;\! x^*)_{\varphi} =\pi_{\varphi}(y) (x^*)_{\varphi}
=\pi_{\varphi}(y) S_{\varphi} x_{\varphi} \\
=\;&\pi_{\varphi}(y) J_{\varphi} \Delta_{\varphi}^{1/2} x_{\varphi}\;\! .
\end{split}
\end{equation*}
Since $\pi_{\varphi}(\mathfrak{M}_{\varphi})$ is $w$-dense in $M$, it follows the equality
$\sigma^{\varphi}_{-i/2}(x)_{\varphi} = \Delta_{\varphi}^{1/2} x_{\varphi}\;\! .$

\end{proof}

The above two lemmas can be used to produce elements of the Tomita algebra $\mathfrak{T}_{\varphi}$
by "regularizing" elements of $\mathfrak{N}_{\varphi}$ (not only elements of $\mathfrak{A}_{\varphi}\;\!$,
as customary : see in \cite{SZ1} the comments after the proof of Theorem 10.20 on page 347) :

\begin{lemma}\label{regularize}
Let $\varphi$ be a faithful, semi-finite, normal weight on a $W^*$-algebra $M$.
For each $x\in M$,

\centerline{$\displaystyle u =\frac 1{\;\!\sqrt{\pi}\;\!}\! \int\limits_{-\infty}^{+\infty}\!\!
e^{-t^2}\! {\sigma}^{\varphi}_t(x)\;\!{\rm d}t$}
\smallskip

\noindent  belongs to $M^{\varphi}_{\infty}$ and
\begin{equation}\label{formula}
\sigma^{\varphi}_z(u) =\frac 1{\;\!\sqrt{\pi}\;\!}\! \int\limits_{-\infty}^{+\infty}\!\!
e^{-(t-z)^2}\! {\sigma}^{\varphi}_{t}(x)\;\!{\rm d}t\;\! ,\qquad z\in\mathbb{C}\;\! .
\end{equation}
Assuming that $x\in\mathfrak{N}_{\varphi}\;\!$, we have $u\in \mathfrak{T}_{\varphi}\;\!$.
\end{lemma}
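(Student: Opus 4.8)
The plan is to prove the two assertions in turn: the first is the classical mollification construction, and the second rests on Lemmas~\ref{integral} and \ref{the-adjoint}.

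First I would fix $x\in M$ and set, for $z\in\mathbb{C}$, $F(z)=\tfrac1{\sqrt\pi}\int_{-\infty}^{+\infty}e^{-(t-z)^2}\sigma^\varphi_t(x)\,{\rm d}t$. This is a well-defined element of $M$ in the sense of (\ref{integration}): $\mathbb{R}\ni t\mapsto\sigma^\varphi_t(x)$ is $w$-continuous and norm-bounded by $\|x\|$, while $\lvert e^{-(t-z)^2}\rvert=e^{({\rm Im}\,z)^2}e^{-(t-{\rm Re}\,z)^2}$ is integrable in $t$, locally uniformly in $z$. Applying an arbitrary $\psi\in M_*$ to $F$ and using (\ref{integration}), Morera's theorem together with Fubini show $z\mapsto\psi(F(z))$ is entire, while dominated convergence shows $F$ is $w$-continuous; hence $F:\mathbb{C}\to M$ is entire (cf.\ \cite{SZ1}, \S\,10.20). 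For real $s$, the substitution $t\mapsto t-s$ and the $w$-continuity of $\sigma^\varphi_s$ give $\sigma^\varphi_s(u)=\tfrac1{\sqrt\pi}\int e^{-t^2}\sigma^\varphi_{s+t}(x)\,{\rm d}t=F(s)$, so $F$ is the entire extension of $t\mapsto\sigma^\varphi_t(u)$; thus $u\in M^\varphi_\infty$ and $\sigma^\varphi_z(u)=F(z)$, which is (\ref{formula}). The same substitution applied to (\ref{formula}) gives $\sigma^\varphi_w(\sigma^\varphi_z(u))=\sigma^\varphi_{w+z}(u)$ for real $w$, hence (both sides being entire in $w$) for all $w\in\mathbb{C}$; in particular each $\sigma^\varphi_z(u)$ again lies in $M^\varphi_\infty$, so in $\mathcal{D}(\sigma^\varphi_{-i/2})$.

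Now assume $x\in\mathfrak{N}_\varphi$; it suffices to show $\sigma^\varphi_z(u)\in\mathfrak{A}_\varphi$ for every $z\in\mathbb{C}$. For any $w\in\mathbb{C}$ the function $t\mapsto\tfrac1{\sqrt\pi}e^{-(t-w)^2}$ belongs to $L^1(\mathbb{R})$, so by (\ref{formula}) and Lemma~\ref{integral}, $\sigma^\varphi_w(u)=\int\tfrac1{\sqrt\pi}e^{-(t-w)^2}\sigma^\varphi_t(x)\,{\rm d}t\in\mathfrak{N}_\varphi$. Taking $w=z$ gives $\sigma^\varphi_z(u)\in\mathfrak{N}_\varphi$; taking $w=z-\tfrac i2$, together with the identity $\sigma^\varphi_{-i/2}(\sigma^\varphi_z(u))=\sigma^\varphi_{z-i/2}(u)$ noted above, gives $\sigma^\varphi_{-i/2}(\sigma^\varphi_z(u))\in\mathfrak{N}_\varphi$. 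Since also $\sigma^\varphi_z(u)\in\mathcal{D}(\sigma^\varphi_{-i/2})$, Lemma~\ref{the-adjoint} applies to the element $\sigma^\varphi_z(u)$ and yields $\sigma^\varphi_z(u)\in\mathfrak{N}_\varphi^{\,*}$, hence $\sigma^\varphi_z(u)\in\mathfrak{A}_\varphi$. As $z$ was arbitrary and $u\in M^\varphi_\infty$, this is exactly $u\in\mathfrak{T}_\varphi$.

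The routine facts — existence of the $M$-valued integrals, holomorphy of $F$ via Morera, the additivity $\sigma^\varphi_w\circ\sigma^\varphi_z=\sigma^\varphi_{w+z}$ on entire elements (which here is read straight off (\ref{formula}) rather than quoted) — I would dispatch briefly; the only bookkeeping is checking that each shifted Gaussian $t\mapsto e^{-(t-w)^2}$ lies in $L^1(\mathbb{R})$, which it does. I expect the genuinely substantial point, and the one for which the preliminary lemmas of this section were set up, to be the inference $\sigma^\varphi_z(u)\in\mathfrak{N}_\varphi^{\,*}$: the customary proof that regularized elements lie in $\mathfrak{T}_\varphi$ starts from $x\in\mathfrak{A}_\varphi$, and Lemma~\ref{the-adjoint} is precisely what lets one relax this to $x\in\mathfrak{N}_\varphi$, since $\sigma^\varphi_{-i/2}$ carries the Gaussian integral representing $\sigma^\varphi_z(u)$ to another such integral, which Lemma~\ref{integral} keeps inside $\mathfrak{N}_\varphi$.
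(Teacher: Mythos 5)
Your proposal is correct and follows essentially the same route as the paper's proof: establish (\ref{formula}) as the entire extension of $s\mapsto\sigma^{\varphi}_s(u)$ via the shift $t\mapsto t-s$ in the Gaussian, deduce the semigroup identity $\sigma^{\varphi}_{\zeta}(\sigma^{\varphi}_z(u))=\sigma^{\varphi}_{z+\zeta}(u)$, and then combine Lemma~\ref{integral} (applied to the shifted Gaussians, including $w=z-\tfrac i2$) with Lemma~\ref{the-adjoint} to get $\sigma^{\varphi}_z(u)\in\mathfrak{A}_{\varphi}$. The only difference is that you spell out the holomorphy of the $M$-valued integral (Morera, Fubini, dominated convergence) which the paper leaves implicit.
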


\begin{proof}
Since
\smallskip

\centerline{$\displaystyle \mathbb{R}\ni s\longmapsto \sigma^{\varphi}_s(u) =
\frac 1{\;\!\sqrt{\pi}\;\!}\! \int\limits_{-\infty}^{+\infty}\!\!
e^{-t^2}\! {\sigma}^{\varphi}_{s+t}(x)\;\!{\rm d}t =
\frac 1{\;\!\sqrt{\pi}\;\!}\! \int\limits_{-\infty}^{+\infty}\!\!
e^{-(t-s)^2}\! {\sigma}^{\varphi}_{t}(x)\;\!{\rm d}t$}
\smallskip

\noindent allows the entire extension
\smallskip

\centerline{$\displaystyle \mathbb{C}\ni z\longmapsto
\frac 1{\;\!\sqrt{\pi}\;\!}\! \int\limits_{-\infty}^{+\infty}\!\!
e^{-(t-z)^2}\! {\sigma}^{\varphi}_{t}(x)\;\!{\rm d}t\;\! ,$}
\smallskip

\noindent we have $u\in M^{\varphi}_{\infty}$ and (\ref{formula}) holds true.

It remains to show that, assuming $x\in\mathfrak{N}_{\varphi}\;\!$, we have
\smallskip

\centerline{$\sigma^{\varphi}_z(u)\in \mathfrak{A}_{\varphi}\;\! ,\qquad z\in\mathbb{C}\;\! .$}
\smallskip

Using (\ref{formula}) it is easy to see that
\medskip

\centerline{$\sigma^{\varphi}_t\big( \sigma^{\varphi}_z(u)\big) =\sigma^{\varphi}_{z+t}(u)\;\! ,
\qquad z\in\mathbb{C}\;\! ,t\in\mathbb{R}\;\! ,$}

\noindent so
\begin{equation}\label{semigroup}
\sigma^{\varphi}_z(u)\in M^{\varphi}_{\infty}\text{ and }
\sigma^{\varphi}_{\zeta}\big( \sigma^{\varphi}_z(u)\big) =\sigma^{\varphi}_{z+\zeta}(u)\;\! ,
\qquad z\;\! ,\zeta\in\mathbb{C}\;\! ,
\end{equation}

For each $z\in\mathbb{C}\;\!$, applying Lemma \ref{integral} with
$\displaystyle f(t)=\frac 1{\;\!\sqrt{\pi}\;\!}\;\! e^{-(t-z)^2}$, we deduce that

\noindent $\sigma^{\varphi}_z(u)\in  \mathfrak{N}_{\varphi}\;\!$.
Since $z\in\mathbb{C}$ is here arbitrary, also $\sigma^{\varphi}_{z-i/2}(u)\in  \mathfrak{N}_{\varphi}$
holds true. But by

\noindent (\ref{semigroup}) we have $\sigma^{\varphi}_{-i/2}\big( \sigma^{\varphi}_z(u)\big) =
\sigma^{\varphi}_{z-i/2}(u)\;\!$, so $\sigma^{\varphi}_{-i/2}\big( \sigma^{\varphi}_z(u)\big)\in
\mathfrak{N}_{\varphi}\;\!$. Applying now

\noindent Lemma \ref{the-adjoint}, we conclude that
$\sigma^{\varphi}_z(u)$ belongs also to $\mathfrak{N}_{\varphi}{\!\!\!}^*$, hence
$\sigma^{\varphi}_z(u)\in\mathfrak{A}_{\varphi}\;\!$.

\end{proof}

Next we prove a dominated convergence theorem for integrals of the form (\ref{formula})
and nets of arbitrary cardinality :

\begin{lemma}\label{dom.conv.}
Let $\varphi$ be a faithful, semi-finite, normal weight on a $W^*$-algebra $M$,
and $(x_{\iota})_{\iota}$ a net in the closed unit ball of $M$ such that
$x_\iota \xrightarrow{\; \iota\;}1_M$ in the $s^*$-topology.
Let the net $(u_{\iota})_{\iota}$ be defined by the formula
\smallskip

\centerline{$\displaystyle u_{\iota} =\frac 1{\;\!\sqrt{\pi}\;\!}\! \int\limits_{-\infty}^{+\infty}\!\!
e^{-t^2}\! {\sigma}^{\varphi}_t(x_{\iota})\;\!{\rm d}t\;\! .$}
\smallskip

\noindent Then
\begin{itemize}
\item[(i)] $u_{\iota}\in M^{\varphi}_{\infty}$ for all $\iota\;\!$;
\item[(ii)] $\|\sigma^{\varphi}_z(u_\iota)\|\leq e^{({\rm Im}z)^2}$ for all $\iota$ and $z\in\mathbb{C}\;\!$;
\item[(iii)] $\sigma^{\varphi}_z(u_\iota) \xrightarrow{\;\; \iota\;\,}1_M$ in the $s^*$-topology
for all $z\in\mathbb{C}\;\!$.
\end{itemize}
\end{lemma}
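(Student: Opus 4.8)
The plan is to establish (i), (ii), (iii) in that order, the first two being essentially immediate from Lemma~\ref{regularize} and its formula (\ref{formula}), and the third being the real content.

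\textbf{Steps (i) and (ii).} For each $\iota$, the element $x_\iota\in M$ so Lemma~\ref{regularize} gives $u_\iota\in M^\varphi_\infty$ together with the formula $\sigma^\varphi_z(u_\iota)=\frac1{\sqrt\pi}\int_{-\infty}^{+\infty}e^{-(t-z)^2}\sigma^\varphi_t(x_\iota)\,{\rm d}t$. Writing $z=s+ir$ and substituting $t\mapsto t+s$, the kernel becomes $e^{-(t-ir)^2}=e^{r^2}e^{-t^2}e^{-2irt}$, so $\|\sigma^\varphi_z(u_\iota)\|\le\frac1{\sqrt\pi}\int_{-\infty}^{+\infty}e^{r^2}e^{-t^2}\|\sigma^\varphi_t(x_\iota)\|\,{\rm d}t\le e^{r^2}=e^{({\rm Im}z)^2}$, using $\|\sigma^\varphi_t(x_\iota)\|=\|x_\iota\|\le1$. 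This proves (ii), and (i) is already contained in the citation of Lemma~\ref{regularize}.

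\textbf{Step (iii).} Fix $z=s+ir\in\mathbb{C}$. After the substitution above we may write
\begin{equation*}
\sigma^\varphi_z(u_\iota)=\frac1{\sqrt\pi}\int_{-\infty}^{+\infty}e^{-t^2}g_z(t)\,\sigma^\varphi_{t+s}(x_\iota)\,{\rm d}t,
\qquad g_z(t)=e^{r^2}e^{-2irt},
\end{equation*}
so that, since $\frac1{\sqrt\pi}\int e^{-t^2}e^{-2irt}\,{\rm d}t=e^{-r^2}$, we get $\frac1{\sqrt\pi}\int e^{-t^2}g_z(t)\,{\rm d}t=1$ and hence
\begin{equation*}
\sigma^\varphi_z(u_\iota)-1_M=\frac1{\sqrt\pi}\int_{-\infty}^{+\infty}e^{-t^2}g_z(t)\bigl(\sigma^\varphi_{t+s}(x_\iota)-1_M\bigr)\,{\rm d}t.
\end{equation*}
The idea is now to estimate $p_\psi$ for an arbitrary normal positive form $\psi$ on $M$, using inequality (\ref{semi-norm-ineq}): $p_\psi\bigl(\sigma^\varphi_z(u_\iota)-1_M\bigr)\le\frac1{\sqrt\pi}\int_{-\infty}^{+\infty}e^{-t^2}|g_z(t)|\,p_\psi\bigl(\sigma^\varphi_{t+s}(x_\iota)-1_M\bigr)\,{\rm d}t$, where $|g_z(t)|=e^{r^2}$ is constant. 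The integrand is dominated by the $\mu$-integrable function $t\mapsto e^{-t^2}e^{r^2}\bigl(p_\psi(\sigma^\varphi_{t+s}(x_\iota))+p_\psi(1_M)\bigr)$, and here the first term is bounded uniformly in $\iota$ and locally uniformly in $t$: indeed $p_\psi(\sigma^\varphi_{t+s}(x_\iota))^2\le\psi\bigl(\sigma^\varphi_{t+s}(x_\iota^*x_\iota)\bigr)+\psi\bigl(\sigma^\varphi_{t+s}(x_\iota x_\iota^*)\bigr)=(\psi\circ\sigma^\varphi_{t+s})(x_\iota^*x_\iota+x_\iota x_\iota^*)\le\|x_\iota^*x_\iota+x_\iota x_\iota^*\|\,\|\psi\circ\sigma^\varphi_{t+s}\|\le2\|\psi\|$, since $\sigma^\varphi_{t+s}$ is a $*$-automorphism and $\psi\circ\sigma^\varphi_{t+s}$ is again a normal positive form of the same norm. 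Thus the integrand is dominated, uniformly in $\iota$, by $C\,e^{-t^2}$ for a constant $C=C(\psi,z)$.

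\textbf{The main obstacle --- passing the limit under the integral for an arbitrary net.} Since $(x_\iota)_\iota$ is a net and not a sequence, Lebesgue's dominated convergence theorem does not directly apply to $\int e^{-t^2}|g_z(t)|\,p_\psi(\sigma^\varphi_{t+s}(x_\iota)-1_M)\,{\rm d}t$. The way around this is to argue by contradiction together with a pointwise-a.e.\ argument along a subnet, or, cleanly, to invoke the fact that the $s^*$-convergence $x_\iota\to1_M$ forces $\sigma^\varphi_{t+s}(x_\iota)\to1_M$ in $s^*$ for each fixed $t$ (as $\sigma^\varphi_{t+s}$ is $w$-- and hence $s^*$--continuous on bounded sets), so that the functions $h_\iota(t):=e^{-t^2}|g_z(t)|\,p_\psi(\sigma^\varphi_{t+s}(x_\iota)-1_M)$ converge to $0$ pointwise and are dominated by the fixed integrable function $C\,e^{-t^2}$ with $0\le h_\iota(t)\le C\,e^{-t^2}$. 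For such a uniformly dominated net of measurable functions converging pointwise to $0$, one still has $\int h_\iota\,{\rm d}t\to0$: given $\varepsilon>0$ choose a compact $K$ with $\int_{\mathbb{R}\setminus K}C\,e^{-t^2}\,{\rm d}t<\varepsilon$, and on $K$ apply the bounded convergence / Egorov-type argument --- more precisely, if it failed there would be $\varepsilon_0>0$ and a subnet with $\int_K h_\iota\ge\varepsilon_0$, yet $\int_K h_\iota\le\int_K C e^{-t^2}$ and $h_\iota\to0$ pointwise with bound $Ce^{-t^2}$ contradicts Fatou applied to $Ce^{-t^2}\mathbf 1_K-h_\iota\ge0$. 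Hence $p_\psi\bigl(\sigma^\varphi_z(u_\iota)-1_M\bigr)\to0$ for every normal positive form $\psi$, which is exactly $\sigma^\varphi_z(u_\iota)\to1_M$ in the $s^*$-topology, proving (iii). I expect the delicate point to be precisely this justification that dominated convergence survives for uncountable nets; everything else is bookkeeping with (\ref{formula}), (\ref{semi-norm-ineq}) and the continuity of the $\sigma^\varphi_t$.
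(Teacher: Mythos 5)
Your steps (i) and (ii) are fine and coincide with the paper's argument, and your reduction of (iii) to showing $\int_{-\infty}^{+\infty}e^{-t^2}\,|g_z(t)|\,p_\psi\bigl(\sigma^\varphi_{t+s}(x_\iota)-1_M\bigr)\,{\rm d}t\to0$ via (\ref{semi-norm-ineq}) is also the right first move. But the step you yourself flag as delicate is a genuine gap: it is \emph{false} that a net $(h_\iota)_\iota$ of nonnegative measurable functions with $0\le h_\iota\le C e^{-t^2}$ and $h_\iota\to0$ pointwise must satisfy $\int h_\iota\,{\rm d}t\to0$. Standard counterexample: index by the finite subsets $F\subset[0,1]$ ordered by inclusion and put $h_F=\mathbf 1_{[0,1]\setminus F}$; then $h_F\to 0$ pointwise (for each $t$, eventually $t\in F$), $0\le h_F\le 1$, yet $\int_0^1 h_F\,{\rm d}t=1$ for every $F$. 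Your proposed rescue via ``Fatou applied to $Ce^{-t^2}\mathbf 1_K-h_\iota\ge0$'' does not work because Fatou's lemma itself fails for nets: in the same example $g_F:=1-h_F=\mathbf 1_F$ has $\liminf_F g_F=1$ pointwise but $\int g_F=0$ for all $F$, so $\int\liminf g_F\le\liminf\int g_F$ is violated. (Continuity of $t\mapsto p_\psi(\sigma^\varphi_{t+s}(x_\iota)-1_M)$ does not repair this; one can make the counterexample with continuous functions.) This is exactly the obstruction the paper is at pains to point out: dominated convergence is available for sequences, and Dini for increasing nets, but not for general nets.

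What actually closes the gap in the paper is a uniformity statement in $t$ over compact intervals, obtained from Akemann's theorem: on bounded subsets of $M$ the $s^*$-topology agrees with the Mackey topology $\tau_w$, i.e.\ with uniform convergence on weakly compact subsets of $M_*$. Since $K_{t_0}=\{\psi\circ\sigma^\varphi_t\,;\,|t|\le t_0\}$ is weakly compact in $M_*$, the bounded $s^*$-null net $(x_\iota-1_M)^*(x_\iota-1_M)+(x_\iota-1_M)(x_\iota-1_M)^*$ converges to $0$ \emph{uniformly} over the forms in $K_{t_0}$; combining this with a Gaussian tail estimate for $|t|>t_0$ gives the convergence of the integrals. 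To repair your proof you would need to replace the pointwise-plus-domination argument by this (or an equivalent) uniform estimate on compact $t$-sets; without some such input the limit cannot be passed under the integral for an arbitrary net.
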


\begin{proof}
(i) is an immediate consequence of Lemma \ref{regularize}.

For (ii), let $\iota$ and $z\in\mathbb{C}$ be arbitrary. By Lemma \ref{regularize} we have
\begin{equation}\label{int.repr.}
\sigma^{\varphi}_z(u_{\iota}) =\frac 1{\;\!\sqrt{\pi}\;\!}\! \int\limits_{-\infty}^{+\infty}\!\!
e^{-(t-z)^2}\! {\sigma}^{\varphi}_{t}(x_{\iota})\;\!{\rm d}t\;\! .
\end{equation}
Since $\| {\sigma}^{\varphi}_{t}(x_{\iota})\| =\| x_{\iota}\|\leq 1$ for all $t\in\mathbb{R}\;\!$,
it follows
\begin{equation*}
\| \sigma^{\varphi}_z(u_{\iota})\|\leq \frac 1{\;\!\sqrt{\pi}\;\!}\! \int\limits_{-\infty}^{+\infty}\!\!
|\;\! e^{-(t-z)^2}|\;\!{\rm d}t =\frac 1{\;\!\sqrt{\pi}\;\!}\! \int\limits_{-\infty}^{+\infty}\!\!
e^{-(t-{\rm Re}z)^2+({\rm Im}z)^2}{\rm d}t =e^{({\rm Im}z)^2}\;\! .
\end{equation*}

The more involved issue is (iii). For fixed $z\in\mathbb{C}\;\!$, we have to show that
\smallskip

\centerline{$\displaystyle \sigma^{\varphi}_z(u_\iota)-1_M
\overset{(\ref{int.repr.})}{=}\frac 1{\;\!\sqrt{\pi}\;\!}\! \int\limits_{-\infty}^{+\infty}\!\!
e^{-(t-z)^2}\! {\sigma}^{\varphi}_{t}(x_{\iota}-1_M)\;\!{\rm d}t \xrightarrow{\;\; \iota\;\,} 0$}
\smallskip

\noindent in the $s^*$-topology. Since the $s^*$-topology is defined by the semi-norms
\medskip

\centerline{$p_{\psi} : M\ni x\longmapsto \psi (x^*x)^{1/2}+ \psi (x\;\! x^*)^{1/2}\;\!$, $\psi$ a
normal positive form on $M\;\! ,$}
\medskip

\noindent this means that

\centerline{$\displaystyle p_{\psi}\bigg( \frac 1{\;\!\sqrt{\pi}\;\!}\! \int\limits_{-\infty}^{+\infty}\!\!
e^{-(t-z)^2}\! {\sigma}^{\varphi}_{t}(x_{\iota}-1_M)\;\!{\rm d}t\bigg) \xrightarrow{\;\; \iota\;\,} 0$}
\smallskip

\noindent for every normal positive form $\psi$ on $M\;\!$.
\smallskip

For let $\psi$ be any normal positive form on $M\;\!$. Since, according to (\ref{semi-norm-ineq}),
\begin{equation*}
\begin{split}
p_{\psi}\bigg( \frac 1{\;\!\sqrt{\pi}\;\!}\! \int\limits_{-\infty}^{+\infty}\!\!
e^{-(t-z)^2}\! {\sigma}^{\varphi}_{t}(x_{\iota}-1_M)\;\!{\rm d}t\bigg) \leq\;&
\frac 1{\;\!\sqrt{\pi}\;\!}\! \int\limits_{-\infty}^{+\infty}\!\!
p_{\psi}\Big( e^{-(t-z)^2}\! {\sigma}^{\varphi}_{t}(x_{\iota}-1_M) \Big)\;\!{\rm d}t \\
=\;&\frac 1{\;\!\sqrt{\pi}\;\!}\! \int\limits_{-\infty}^{+\infty}\!
\Big|\;\! e^{-(t-z)^2}\Big|\;\! p_{\psi}\big( {\sigma}^{\varphi}_{t}(x_{\iota}-1_M) \big)\;\!{\rm d}t\;\! ,
\end{split}
\end{equation*}
the proof is done if we prove the convergence
\smallskip

\centerline{$\displaystyle \int\limits_{-\infty}^{+\infty}\!
\Big|\;\! e^{-(t-z)^2}\Big|\;\! p_{\psi}\big( {\sigma}^{\varphi}_{t}(x_{\iota}-1_M) \big)\;\!{\rm d}t
\xrightarrow{\;\; \iota\;\,} 0\;\! ,$}
\smallskip

\noindent which of course is consequence of
\begin{equation}\label{convergence}
\int\limits_{-\infty}^{+\infty}\!\!
e^{-(t-{\rm Re}z)^2}\! (\psi\circ {\sigma}^{\varphi}_{t}) \Big( (x_{\iota}-1_M)^*(x_{\iota}-1_M)+
(x_{\iota}-1_M)\;\! (x_{\iota}-1_M)^*\Big)^{1/2}\! {\rm d}t
\xrightarrow{\;\; \iota\;\,} 0\;\! ,
\end{equation}
because $\displaystyle \Big|\;\! e^{-(t-z)^2}\Big| =e^{-(t-{\rm Re}z)^2+({\rm Im}z)^2}$ and
\begin{equation*}
\begin{split}
&p_{\psi}\big( {\sigma}^{\varphi}_{t}(x_{\iota}-1_M) \big) \\
=\;&(\psi\circ {\sigma}^{\varphi}_{t})\big( (x_{\iota}-1_M)^*(x_{\iota}-1_M)\big)^{1/2}
+(\psi\circ {\sigma}^{\varphi}_{t})\big( (x_{\iota}-1_M)\;\! (x_{\iota}-1_M)^*\big)^{1/2} \\
\leq\;& \sqrt{2}\;\! (\psi\circ {\sigma}^{\varphi}_{t}) \Big( (x_{\iota}-1_M)^*(x_{\iota}-1_M)
+(x_{\iota}-1_M)\;\! (x_{\iota}-1_M)^*\Big)^{1/2}
\end{split}
\end{equation*}
We go to complete the proof by verifying (\ref{convergence}).
\smallskip

Since $x_\iota \xrightarrow{\; \iota\;}1_M$ in the $s^*$-topology and $\| x_{\iota}\|\leq 1$
for all $\iota\;\!$, we have that
\smallskip

\centerline{$\Big( (x_{\iota}-1_M)^*(x_{\iota}-1_M)
+(x_{\iota}-1_M)\;\! (x_{\iota}-1_M)^*\Big)_{\iota}$}
\smallskip

\noindent is a bounded net, convergent to $0$ in the $s^*$-topology.
According to a theorem due to C. A. Akemann (see \cite{Ak}, Theorem II.7 or
\cite{SZ2}, Corollary 8.17), on bounded subsets of $M$ the $s^*$-topology coincides with
the Mackey topology $\tau_w$ associated to the $w$-topology, that is with the topology of the
uniform convergence on the weakly compact absolutely convex subsets of the predual $M_*\;\!$.
Since, by the classical Krein-\u Smulian theorem (see e.g. \cite{DS}, Theorem V.6.4), the closed
absolutely convex hull of every weakly compact set in a Banach space is still weakly compact,
$\tau_w$ is actually the topology of the uniform convergence on the weakly compact subsets of
$M_*\;\!$. Therefore
\begin{equation}\label{Mackey}
\sup\limits_{\theta\in K} \Big|\;\! \theta \Big( (x_{\iota}-1_M)^*(x_{\iota}-1_M)+
(x_{\iota}-1_M)\;\! (x_{\iota}-1_M)^*\Big) \Big| \xrightarrow{\; \iota\;} 0
\end{equation}
for every weakly compact $K\subset M_*\;\!$.
\smallskip

Now let $\varepsilon >0$ be arbitrary. Choose some $t_0>0$ such that
\begin{equation}\label{big-t}
\int\limits_{|t|>t_0}\!\!\! e^{-(t-{\rm Re}z)^2}{\rm d}t\leq\frac{\varepsilon}{\;\! 4 \sqrt{2\;\! \|\psi\|}\;\!}\, .
\end{equation}
Since $K_{t_0}=\{ \psi\circ {\sigma}^{\varphi}_{t} ; |t|\leq t_0\}$ is a weakly compact
subset of $M_*\;\!$, (\ref{Mackey}) holds true with $K=K_{t_0}\;\!$. Thus there exists
some $\iota_0$ such that
\begin{equation}\label{small-t}
\sup\limits_{|t|\leq t_0} \Big|\;\! (\psi\circ {\sigma}^{\varphi}_{t}) \Big( (x_{\iota}-1_M)^*(x_{\iota}-1_M)+
(x_{\iota}-1_M)\;\! (x_{\iota}-1_M)^*\Big) \Big| \leq \frac{\varepsilon}{\;\! 2\;\! \sqrt{\pi}\;\!}
\end{equation}
for all $\iota\geq\iota_0\;\!$. (\ref{big-t}) implies
\begin{equation*}
\begin{split}
&\int\limits_{|t|>t_0}\!\!\!
e^{-(t-{\rm Re}z)^2}\! (\psi\circ {\sigma}^{\varphi}_{t}) \Big( (x_{\iota}-1_M)^*(x_{\iota}-1_M)+
(x_{\iota}-1_M)\;\! (x_{\iota}-1_M)^*\Big)^{1/2}\! {\rm d}t \\
\leq\;&\int\limits_{|t|>t_0}\!\!\! e^{-(t-{\rm Re}z)^2}\! \big( 8\;\! \|\psi\|\big)^{1/2}{\rm d}t
\leq \frac{\varepsilon}{\;\! 4 \sqrt{2\;\! \|\psi\|}\;\!} \big( 8\;\! \|\psi\|\big)^{1/2} =\frac{\varepsilon}{\;\! 2\;\!}\;\! ,
\end{split}
\end{equation*}
while using (\ref{small-t}) we deduce for every $\iota\geq\iota_0\;\!$:
\begin{equation*}
\begin{split}
&\int\limits_{|t|\leq t_0}\!\!\!
e^{-(t-{\rm Re}z)^2}\! (\psi\circ {\sigma}^{\varphi}_{t}) \Big( (x_{\iota}-1_M)^*(x_{\iota}-1_M)+
(x_{\iota}-1_M)\;\! (x_{\iota}-1_M)^*\Big)^{1/2}\! {\rm d}t \\
\leq\;&\int\limits_{|t|\leq t_0}\!\!\! e^{-(t-{\rm Re}z)^2}\! \frac{\varepsilon}{\;\! 2\;\! \sqrt{\pi}\;\!}\;\!{\rm d}t
\leq \frac{\varepsilon}{\;\! 2\;\! \sqrt{\pi}\;\!} \int\limits_{-\infty}^{+\infty}\!\! e^{-(t-{\rm Re}z)^2}{\rm d}t
=\frac{\varepsilon}{\;\! 2\;\! \sqrt{\pi}\;\!} \int\limits_{-\infty}^{+\infty}\!\! e^{-\;\! t^2}{\rm d}t
=\frac{\varepsilon}{\;\! 2\;\!}\;\! .
\end{split}
\end{equation*}
Consequently, for every $\iota\geq\iota_0\;\!$,
\begin{equation*}
\begin{split}
&\int\limits_{-\infty}^{+\infty}\!\!
e^{-(t-{\rm Re}z)^2}\! (\psi\circ {\sigma}^{\varphi}_{t}) \Big( (x_{\iota}-1_M)^*(x_{\iota}-1_M)+
(x_{\iota}-1_M)\;\! (x_{\iota}-1_M)^*\Big)^{1/2}\! {\rm d}t \\
=\;&\int\limits_{|t|>t_0}\!\!\! ...\; +\int\limits_{|t|\leq t_0}\!\!\! ...\; \leq \frac{\varepsilon}{\;\! 2\;\!}
+\frac{\varepsilon}{\;\! 2\;\!} =\varepsilon\;\! .
\end{split}
\end{equation*}
\end{proof}

\begin{remark}
The proof of statement (iii) in Lemma \ref{dom.conv.} was rather cumbersome, we could not avoid to use the deep
result of Akemann concerning $s^*$-topology.

As we noticed at the beginning of this section,
if $(x_{\iota})_{\iota}$ would be an increasing net of positive elements, then we
could make use of Dini's theorem, as it was done in the proof of \cite{PT}, Lemma 5.2.
On the other hand, if the net $(x_{\iota})_{\iota}$ would be a

\noindent sequence, then we could use the Lebesgue dominated convergence theorem,
\end{remark}

Lemmas \ref{dom.conv.} and \ref{regularize} yield immediately :

\begin{theorem}\label{reg.net}
Let $\varphi$ be a faithful, semi-finite, normal weight on a $W^*$-algebra $M$,
and $(x_{\iota})_{\iota}$ a net in the closed unit ball of $M$ such that
$x_\iota \xrightarrow{\; \iota\;}1_M$ in the $s^*$-topology.
Let the net $(u_{\iota})_{\iota}$ be defined by the formula
\smallskip

\centerline{$\displaystyle u_{\iota} =\frac 1{\;\!\sqrt{\pi}\;\!}\! \int\limits_{-\infty}^{+\infty}\!\!
e^{-t^2}\! {\sigma}^{\varphi}_t(x_{\iota})\;\!{\rm d}t\;\! .$}
\smallskip

\noindent Then
\smallskip

\begin{itemize}
\item[(i)] $u_{\iota}\in M^{\varphi}_{\infty}$ for all $\iota\;\!$;
\item[(ii)] $\|\sigma^{\varphi}_z(u_\iota)\|\leq e^{({\rm Im}z)^2}$ for all $\iota$ and $z\in\mathbb{C}\;\!$;
\item[(iii)] $\sigma^{\varphi}_z(u_\iota) \xrightarrow{\;\; \iota\;\,}1_M$ in the $s^*$-topology
for all $z\in\mathbb{C}\;\!$.
\end{itemize}
\smallskip

\noindent Moreover, if $x_{\iota}\in\mathfrak{N}_{\varphi}$ for all $\iota\;\!$, then $u_{\iota}$
belongs to $\mathfrak{T}_{\iota}$ for every $\iota$ and therefore $(u_{\iota})_{\iota}$
is a regularizing net for $\varphi\;\!$.
\end{theorem}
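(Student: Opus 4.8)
\textbf{Proof plan for Theorem \ref{reg.net}.}
The plan is to read off (i), (ii), (iii) directly from Lemma \ref{dom.conv.}, since the net $(u_\iota)_\iota$ here is defined by exactly the same formula and under exactly the same hypotheses on $(x_\iota)_\iota$. There is genuinely nothing to do for these three items beyond citing Lemma \ref{dom.conv.}. So the only content of the theorem is the last sentence: under the extra assumption $x_\iota\in\mathfrak N_\varphi$ for all $\iota$, one must show $u_\iota\in\mathfrak T_\varphi$ for every $\iota$, which combined with (i)--(iii) is precisely the definition of a regularizing net for $\varphi$ recalled at the start of this section.

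First I would fix $\iota$ and apply Lemma \ref{regularize} to $x=x_\iota$. Since $x_\iota\in\mathfrak N_\varphi$, the second assertion of Lemma \ref{regularize} gives at once $u_\iota\in\mathfrak T_\varphi$. That is the whole argument for the membership claim; it is not an obstacle at all, because Lemma \ref{regularize} was stated in exactly the generality needed (its hypothesis is $x\in\mathfrak N_\varphi$, not $x\in\mathfrak A_\varphi$), which is the point of having proved Lemmas \ref{the-adjoint} and \ref{regularize} in that form.

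Finally I would assemble the pieces: (i) and (ii) and (iii) come verbatim from Lemma \ref{dom.conv.}; if moreover each $x_\iota\in\mathfrak N_\varphi$ then each $u_\iota\in\mathfrak T_\varphi$ by Lemma \ref{regularize}; and then (ii) implies in particular that for every compact $K\subset\mathbb C$ one has $\sup_{z\in K,\,\iota}\|\sigma^\varphi_z(u_\iota)\|\le\sup_{z\in K}e^{({\rm Im}z)^2}<+\infty$, while the companion bound $\sup_{z\in K,\,\iota}\|\sigma^\varphi_z(u_\iota)_\varphi\|<+\infty$ follows from the integral representation \eqref{formula} together with Lemma \ref{integral}: writing $\sigma^\varphi_z(u_\iota)_\varphi=\frac1{\sqrt\pi}\int e^{-(t-z)^2}\Delta_\varphi^{it}(x_\iota)_\varphi\,{\rm d}t$ one estimates the norm by $\frac1{\sqrt\pi}\int|e^{-(t-z)^2}|\,\|(x_\iota)_\varphi\|\,{\rm d}t=e^{({\rm Im}z)^2}\|(x_\iota)_\varphi\|$, and $\|(x_\iota)_\varphi\|=\varphi(x_\iota^*x_\iota)^{1/2}$ is bounded in $\iota$ since the net $(x_\iota)_\iota$ is bounded in $\mathfrak N_\varphi$; hence condition (i) of the definition of regularizing net holds, condition (ii) of that definition is (iii) above, and $(u_\iota)_\iota$ is a regularizing net for $\varphi$. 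If one does not wish to assume the $(x_\iota)_\iota$ are norm-bounded in $\mathfrak N_\varphi$, the only subtlety is supplying that $\varphi(x_\iota^*x_\iota)$-boundedness; I expect that in the intended applications it is part of the hypotheses, so this is at most a bookkeeping remark rather than a real obstacle.
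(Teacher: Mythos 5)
Your first two paragraphs coincide with the paper's entire proof, which is literally the one-line remark that Lemmas \ref{dom.conv.} and \ref{regularize} yield (i)--(iii) and the membership $u_\iota\in\mathfrak{T}_{\varphi}$; there is nothing to add there.

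The issue you flag in your last paragraph, however, deserves more than the label ``bookkeeping'': it is a genuine gap in the final assertion of the theorem as stated, and the paper does not close it either. The definition of a regularizing net at the head of Section 2 requires $\sup_{z\in K,\,\iota}\|\sigma^{\varphi}_z(u_{\iota})_{\varphi}\|<+\infty$ with the supremum taken over $\iota$ as well as over $z\in K$, and your estimate $\|\sigma^{\varphi}_z(u_{\iota})_{\varphi}\|\leq e^{({\rm Im}z)^2}\|(x_{\iota})_{\varphi}\|$ is essentially the only one available; the quantity $\|(x_{\iota})_{\varphi}\|=\varphi(x_{\iota}^*x_{\iota})^{1/2}$ is controlled neither by $\|x_{\iota}\|\leq 1$ nor by the mere membership $x_{\iota}\in\mathfrak{N}_{\varphi}$. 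A concrete failure: take $M=L^{\infty}(\mathbb{R})$ with $\varphi$ the integral against Lebesgue measure (a trace, so $\sigma^{\varphi}_t={\rm id}$ and $u_n=x_n$) and $x_n=\chi_{[-n,n]}$; then $\|x_n\|\leq 1$, $x_n\in\mathfrak{N}_{\varphi}$ and $x_n\to 1_M$ in the $s^*$-topology, yet $\|(u_n)_{\varphi}\|=\sqrt{2n}\to+\infty$. So either the hypothesis $\sup_{\iota}\varphi(x_{\iota}^*x_{\iota})<+\infty$ must be added (after which your one-line computation via Lemma \ref{integral} and \eqref{formula} finishes the argument), or the second supremum in condition (i) of the definition must be dropped; the latter is harmless for the applications in Section 3, where only the operator-norm bound and the $s^*$-convergence (iii) of $\sigma^{\varphi}_z(u_{\iota})$ are actually used. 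You were right to isolate exactly this missing bound; you were wrong only in guessing that it follows from the stated hypotheses.
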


\hfill $\square$
\smallskip

\section{Inequalities and equality between weights}


The main tool by proving criteria for inequalities and equalities between weights is
the following generalization of \cite{Z2}, Lemma 2.1.

We recall that a ${}^*$-subalgebra $\mathfrak{M}$ of a $W^*$-algebra $M$ is called
{\it facial subalgebra} or {\it hereditary subalgebra} whenever $\mathfrak{M}\cap M^+$%
is a face, that is a convex cone satisfying
\smallskip

\centerline{$M^+\ni b\leq a\in \mathfrak{M}\cap M^+\Longrightarrow b\in\mathfrak{M}\cap M^+\;\! ,$}
\smallskip

\noindent and $\mathfrak{M}$ is the linear span of it (see e.g. \cite{SZ1}, Section 3.21).

\begin{theorem}\label{ineq}
Let $M$ be a $W^*$-algebra, $\varphi$ a faithful, semi-finite, normal weight on $M$, $a\in (M^{\varphi})^+$,
and $\psi$ a normal weight on $M$. Assume that there exists a $w$-dense,

\noindent $\sigma^{\varphi}$-invariant ${}^*$-subalgebra $\mathfrak{M}$ of $\mathfrak{M}_{{\varphi}_a}$
such that
\begin{equation*}
\psi (x^*x)={\varphi}_a(x^*x)\;\! ,\qquad x\in\mathfrak{M}\;\! .
\end{equation*}
Then
\begin{equation}\label{ineq-phi_a}
\psi\leq{\varphi}_a\;\! .
\end{equation}
Additionally, there exists a $\sigma^{\varphi}$-invariant, hereditary ${}^*$-subalgebra $\mathfrak{M}_0$
of $\mathfrak{M}_{{\varphi}_a}$ such

\noindent that
\smallskip

\noindent\hspace{3.8 cm}$\mathfrak{M}\cap M^+\subset \mathfrak{M}_0\cap M^+\;\! ,$
\smallskip

\centerline{$\psi (b)={\varphi}_a (b)\;\! ,\qquad b\in \mathfrak{M}_0\cap M^+\;\! .$}
\end{theorem}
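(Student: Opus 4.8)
The plan is to reduce the inequality $\psi\le\varphi_a$ to a statement about vectors in the GNS space of $\varphi$, and to exploit the regularizing nets built in Section 2 to transport the hypothesis from $\mathfrak{M}$ to all of $M^+$. First I would recall that since $\mathfrak M$ is $w$-dense, $\sigma^\varphi$-invariant, and contained in $\mathfrak M_{\varphi_a}$, every $x\in\mathfrak M$ has $\sigma^\varphi_t(x)\in\mathfrak M\subset\mathfrak N_{\varphi_a}$, so $\mathfrak M$ consists of $\sigma_\varphi$-analytic-friendly elements; applying Theorem \ref{reg.net} to a bounded net $(x_\iota)_\iota$ drawn from (a left ideal contained in) $\mathfrak M$ with $x_\iota\to 1_M$ in $s^*$, one obtains a regularizing net $(u_\iota)_\iota\subset\mathfrak T_{\varphi_a}$ still controlled by the hypothesis in the sense that $\psi$ and $\varphi_a$ agree on products built from the $u_\iota$. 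The point of mollifying is that $u_\iota\in\mathfrak T_{\varphi_a}$ makes $(u_\iota)_{\varphi_a}$ analytic vectors for $\Delta_{\varphi_a}$, so that the map $x\mapsto\psi(u_\iota^* x u_\iota)$ can be compared with a vector functional on $H_{\varphi_a}$.

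Next I would fix $b\in M^+$ and estimate $\psi(b)$ from below and above by the "sandwiched" quantities $\psi(u_\iota^* b u_\iota)$. On the $\varphi_a$ side, normality and the $s^*$-convergence $\sigma^\varphi_z(u_\iota)\to 1_M$ give $\varphi_a(u_\iota^* b u_\iota)\to\varphi_a(b)$ (lower semicontinuity handles the $+\infty$ case, and the uniform bound (ii) of Theorem \ref{reg.net} together with $a\in(M^\varphi)^+$ lets one move the $u_\iota$ past $a^{1/2}$ up to controlled error). The subtlety is that $\psi$ is only assumed normal, not $\sigma^\varphi$-invariant, so one cannot directly say $\psi(u_\iota^* b u_\iota)\to\psi(b)$; instead I would use that $u_\iota^* b u_\iota\in\mathfrak M_{\varphi_a}$-type elements can be approximated in a way compatible with $\psi$, and that $\liminf_\iota\psi(u_\iota^* b u_\iota)\ge\psi(b)$ by a Fatou/lower-semicontinuity argument applied to the net $u_\iota^* b u_\iota\to b$ in an appropriate topology on the positive cone, using normality of $\psi$. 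Combining, $\psi(b)\le\liminf_\iota\psi(u_\iota^* b u_\iota)$ while each term equals (or is dominated by) $\varphi_a(u_\iota^* b u_\iota)\to\varphi_a(b)$, yielding $\psi(b)\le\varphi_a(b)$, which is (\ref{ineq-phi_a}).

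For the refined final assertion I would take $\mathfrak M_0$ to be the hereditary $*$-subalgebra generated inside $\mathfrak M_{\varphi_a}$ by $\mathfrak M$ together with its $\sigma^\varphi$-translates and the order-ideal they generate — concretely, $\mathfrak M_0\cap M^+=\{\,b\in\mathfrak M_{\varphi_a}\cap M^+ : b\le c\text{ for some }c\in(\text{convex cone generated by }\mathfrak M\cap M^+)\,\}$ — and check it is $\sigma^\varphi$-invariant (immediate, since $\mathfrak M$ and $\mathfrak M_{\varphi_a}$ are) and hereditary by construction. On such $b$ one has $b\le c$ with $c$ a finite sum of elements $x^*x$, $x\in\mathfrak M$, hence $\psi(b)\le\psi(c)=\varphi_a(c)<\infty$ and symmetrically, so $b\in\mathfrak M_\psi$; then the equality $\psi(b)=\varphi_a(b)$ for $b\in\mathfrak M_0\cap M^+$ follows by combining $\psi\le\varphi_a$ (already proved) with the reverse inequality $\varphi_a(b)\le\psi(b)$ obtained by running the same regularizing-net sandwich argument with the roles adapted — using that on the dominated elements $b$ the mollified approximants $u_\iota^* b u_\iota$ lie in $\mathfrak M_0$ where the two weights are linked through the hypothesis on $\mathfrak M$.

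The main obstacle I expect is precisely the absence of any commutation hypothesis on $\psi$: the clean continuity statement $\psi(u_\iota^* b u_\iota)\to\psi(b)$ is false in general, so the estimate must be one-sided and carefully arranged so that the side on which we lose information ($\psi$) is the side where lower semicontinuity suffices, while the side where we need the limit ($\varphi_a$) is handled by the good analytic properties of $\mathfrak T_{\varphi_a}$ and the uniform norm bound (ii) of Theorem \ref{reg.net}. Making this asymmetry rigorous — in particular verifying that $\liminf_\iota\psi(u_\iota^* b u_\iota)\ge\psi(b)$ genuinely holds for an arbitrary net (not a sequence), which is where Akemann's theorem on the $s^*$-topology enters through Lemma \ref{dom.conv.} — is the crux of the argument.
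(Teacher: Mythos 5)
Your overall architecture (mollify a Kaplansky net from $\mathfrak{M}$, sandwich $b$ between $u_\iota b u_\iota$'s, use lower semicontinuity on the $\psi$ side) matches the paper's, and you correctly identify the asymmetry as the crux. But there are two genuine gaps at the heart of the argument. First, you never establish the identity $\psi(u_\iota\, y^*y\, u_\iota)=\varphi_a(u_\iota\, y^*y\, u_\iota)$ for arbitrary $y\in M$; you only assert that ``$\psi$ and $\varphi_a$ agree on products built from the $u_\iota$,'' and your hedge ``equals (or is dominated by)'' suggests you do not see how to get it. The hypothesis only gives equality on $x^*x$ for $x\in\mathfrak{M}$. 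The missing chain is: for fixed $x\in\mathfrak{M}$ the two \emph{bounded} normal positive forms $\psi(x^*\cdot\, x)$ and $\varphi_a(x^*\cdot\, x)$ agree on the $w$-dense subalgebra $\mathfrak{M}$ (because $yx\in\mathfrak{M}$), hence on all of $M$; then a polarization in two real variables $t,s$ combined with the vector-valued integration formula (\ref{integration}) transports this to the mollified elements $u_\iota\,$. Second, your claim $\varphi_a(u_\iota^* b u_\iota)\to\varphi_a(b)$ is false for a general normal weight: lower semicontinuity gives $\varphi_a(b)\leq\varliminf_\iota\varphi_a(u_\iota b u_\iota)\,$, which is the \emph{wrong} direction, and there is no upper bound for free. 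The paper circumvents this by computing $\varphi_a(u_\iota\, y^*y\, u_\iota)$ \emph{exactly} as a vector norm, $\|J_{\varphi}\pi_{\varphi}(a^{1/2})\pi_{\varphi}\big(\sigma^{\varphi}_{-i/2}(u_\iota)\big)J_{\varphi}y_{\varphi}\|^2$ for $y\in\mathfrak{N}_{\varphi}\,$, so that $\sigma^{\varphi}_{-i/2}(u_\iota)\to 1_M$ in $s^*$ yields genuine convergence of these norms; this forces a two-step structure (first $y\in\mathfrak{N}_{\varphi}$, then an extension to $y\in\mathfrak{N}_{\varphi_a}$ via the spectral projections $\chi_{[1/n,\infty)}(a)$) that your sketch does not contain. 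Relatedly, you regularize with respect to $\varphi_a\,$, whose modular group is $a^{it}\sigma^{\varphi}_t(\cdot)a^{-it}$ and which need not be faithful; the hypothesis is $\sigma^{\varphi}$-invariance of $\mathfrak{M}\,$, so the mollification must be done with $\sigma^{\varphi}$ and the operator $a^{1/2}$ carried along by hand (using $a^{1/2}\in M^{\varphi}$ and $a_\iota a^{1/2}\in\mathfrak{N}_{\varphi}$).

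For the final assertion, your $\mathfrak{M}_0$ is essentially the paper's, but the equality $\psi(b)=\varphi_a(b)$ on $\mathfrak{M}_0\cap M^+$ does not need a second run of the sandwich argument: once $\psi\leq\varphi_a$ is known, $\{b\in\mathfrak{M}_{\varphi_a}\cap M^+:\psi(b)=\varphi_a(b)\}$ is a face by a three-line additivity argument. What does need care, and what you skip, is that the hypothesis gives equality only on $\{x^*x: x\in\mathfrak{M}\}\,$, while a general $b\in\mathfrak{M}\cap M^+$ has $b^{1/2}\notin\mathfrak{M}$ in general; the paper bridges this with the increasing sequence $b_n=1_M-(1_M-b)^n\in\mathfrak{M}\,$, the domination $b_n b\, b_n\leq b_n^2\,$, the face property, and normality. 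Without that step your $\mathfrak{M}_0$ is not known to contain $\mathfrak{M}\cap M^+$ with the required equality.
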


The difference between the above Theorem \ref{ineq} and \cite{Z2}, Lemma 2.1 consists in the fact that in
\cite{Z2}, Lemma 2.1 is additionally assumed that
\begin{itemize}
\item[(i)] $\psi$ is semi-finite and $\sigma^{\varphi}$-invariant and
\item[(ii)] $\mathfrak{M}$ is contained already in $\mathfrak{M}_{\varphi}$ (which of course, according to
\cite{PT}, Theorem 3.6, is a subset of $\mathfrak{M}_{{\varphi}_a}$).
\end{itemize}
However the proof of \cite{Z2}, Lemma 2.1 does not use assumption (i) and, on the other hand, we can
adapt it to work with the assumption $\mathfrak{M}\subset\mathfrak{M}_{{\varphi}_a}$ instead of
$\mathfrak{M}\subset\mathfrak{M}_{\varphi}\;\!$.

\begin{proof}
Let $x\in\mathfrak{M}\subset\mathfrak{M}_{{\varphi}_a}$ be arbitrary. Since $\psi (x^*x)={\varphi}_a(x^*x)<+\infty\;\!$,
we have

\noindent $x\in\mathfrak{N}_{\psi}\cap\mathfrak{N}_{{\varphi}_a}$ and therefore $\psi (x^*\cdot\;\! x)$ and
${\varphi}_a(x^*\cdot\;\! x)$ are normal positive forms on $M$. Taking into account that
\medskip

\centerline{$\psi (x^*y^*yx)={\varphi}_a(x^*y^*yx)\;\! ,\qquad y\in\mathfrak{M}$}
\medskip

\noindent and $\mathfrak{M}$ is $w$-dense in $M$, we deduce that
\begin{equation}\label{left-right}
\psi (x^*\cdot\;\! x)={\varphi}_a(x^*\cdot\;\! x)\;\! .
\end{equation}

Now, by the Kaplansky density theorem there exists a net $(a_\iota )_{\iota}$ in $\mathfrak{M}$ such that
$0\leq a_\iota \leq 1_M$ for all $\iota$ and $a_\iota \xrightarrow{\; s^*\,}1_M\;\!$. Set, for each $\iota\;\!$,
\begin{equation}\label{u}
u_\iota =\frac 1{\;\!\sqrt{\pi}\;\!}\! \int\limits_{-\infty}^{+\infty}\!\!
e^{-t^2}\! {\sigma}^{\varphi}_t(a_\iota )\;\!{\rm d}t\in M^+\;\! .
\end{equation}
Clearly, $0\leq u_\iota \leq 1_M$ for all $\iota\;\!$. According to Lemma \ref{dom.conv.},
$u_{\iota}\in M^{\varphi}_{\infty}$ for all $\iota$ and
\begin{equation}\label{conv.u}
\sigma^{\varphi}_z(u_\iota) \xrightarrow{\;\; \iota\;\,}1_M\text{ in the }s^*\text{-topology for all }z\in\mathbb{C}\;\! .
\end{equation}

Since $a^{1/2}\in M^{\varphi}$, also
\begin{equation}\label{ua}
u_\iota a^{1/2} =\frac 1{\;\!\sqrt{\pi}\;\!}\! \int\limits_{-\infty}^{+\infty}\!\!
e^{-t^2}\! {\sigma}^{\varphi}_t(a_\iota )\;\! a^{1/2}\;\!{\rm d}t =
\frac 1{\;\!\sqrt{\pi}\;\!}\! \int\limits_{-\infty}^{+\infty}\!\!
e^{-t^2}\! {\sigma}^{\varphi}_t(a_\iota a^{1/2})\;\!{\rm d}t
\end{equation}
belongs to $M^{\varphi}_{\infty}$ for each $\iota\;\!$.
Furthermore, $a_{\iota}\in\mathfrak{M}\subset\mathfrak{M}_{{\varphi}_a}$ yields
\smallskip

\centerline{$\varphi \big( (a_{\iota}a^{1/2})^*(a_{\iota}a^{1/2})\big) ={\varphi}_a(a_{\iota}^2)<+\infty\;\! ,$}
\smallskip

\noindent hence $a_{\iota}a^{1/2}\in \mathfrak{N}_{\varphi}\;\!$. Taking into account (\ref{ua})
and applying Lemma \ref{regularize},
we deduce that $u_\iota a^{1/2}\in \mathfrak{T}_{\varphi}$ for all $\iota\;\!$.
\smallskip

Let $y\in M$ and $\iota$ be arbitrary. Since $a_{\iota}\in\mathfrak{M}$ and $\mathfrak{M}$
is $\sigma^{\varphi}$-invariant, application of (\ref{left-right}) yields for every $t\;\! ,s\in\mathbb{R}$
and $k=0\;\! ,1\;\! ,2\;\! ,3\;\!$:
\begin{equation*}
\begin{split}
\psi &\Big( \big( \sigma^{\varphi}_t(a_{\iota})+i^k\sigma^{\varphi}_s(a_{\iota})\big)^* y^*y
\big( \sigma^{\varphi}_t(a_{\iota})+i^k\sigma^{\varphi}_s(a_{\iota})\big)\Big) \\
=\, {\varphi}_a &\Big( \big( \sigma^{\varphi}_t(a_{\iota})+i^k\sigma^{\varphi}_s(a_{\iota})\big)^* y^*y
\big( \sigma^{\varphi}_t(a_{\iota})+i^k\sigma^{\varphi}_s(a_{\iota})\big)\Big)\;\! .
\end{split}
\end{equation*}
Applying (\ref{integration}) with
\medskip

\centerline{$\displaystyle F(t\;\! ,s)=\frac 1{\;\!\pi\;\!}\;\! e^{-t^2-s^2}
\big( \sigma^{\varphi}_t(a_{\iota})+i^k\sigma^{\varphi}_s(a_{\iota})\big)^* y^*y
\big( \sigma^{\varphi}_t(a_{\iota})+i^k\sigma^{\varphi}_s(a_{\iota})\big)\;\! ,$}
\medskip

\noindent it follows for $k=0\;\! ,1\;\! ,2\;\! ,3\;\!$:
\begin{equation*}
\begin{split}
\psi &\bigg( \frac 1{\;\!\pi\;\!}\! \int\limits_{-\infty}^{+\infty} \int\limits_{-\infty}^{+\infty}\!
e^{-t^2-s^2} \big( \sigma^{\varphi}_t(a_{\iota})+i^k\sigma^{\varphi}_s(a_{\iota})\big)^* y^*y
\big( \sigma^{\varphi}_t(a_{\iota})+i^k\sigma^{\varphi}_s(a_{\iota})\big)\;\!{\rm d}t\;\!{\rm d}s \bigg) \\
=\, {\varphi}_a &\bigg( \frac 1{\;\!\pi\;\!}\! \int\limits_{-\infty}^{+\infty} \int\limits_{-\infty}^{+\infty}\!
e^{-t^2-s^2} \big( \sigma^{\varphi}_t(a_{\iota})+i^k\sigma^{\varphi}_s(a_{\iota})\big)^* y^*y
\big( \sigma^{\varphi}_t(a_{\iota})+i^k\sigma^{\varphi}_s(a_{\iota})\big)\;\!{\rm d}t\;\!{\rm d}s \bigg)\;\! .
\end{split}
\end{equation*}
Since, by (\ref{u}),
\begin{equation*}
\begin{split}
&u_{\iota}y^*yu_{\iota}=\frac 1{\;\!\pi\;\!}\! \int\limits_{-\infty}^{+\infty} \int\limits_{-\infty}^{+\infty}\!
e^{-t^2-s^2} \sigma^{\varphi}_s(a_{\iota}) y^*y  \sigma^{\varphi}_t(a_{\iota})\;\!{\rm d}t\;\!{\rm d}s \\
=\;&\frac 1{\;\! 4\;\!}\sum\limits_{k=0}^3 \frac{\;\! i^k\;\!}{\pi}
\int\limits_{-\infty}^{+\infty} \int\limits_{-\infty}^{+\infty}\!
e^{-t^2-s^2} \big( \sigma^{\varphi}_t(a_{\iota})+i^k\sigma^{\varphi}_s(a_{\iota})\big)^* y^*y
\big( \sigma^{\varphi}_t(a_{\iota})+i^k\sigma^{\varphi}_s(a_{\iota})\big)\;\!{\rm d}t\;\!{\rm d}s\;\! ,
\end{split}
\end{equation*}
we conclude that
\begin{equation}\label{left-right-u}
\psi (u_{\iota}y^*yu_{\iota})=\varphi_a(u_{\iota}y^*yu_{\iota})\;\! .
\end{equation}

Next let $y\in\mathfrak{N}_{\varphi}$ be arbitrary. Using (\ref{left-right-u}) and applying \cite{Co2},
Lemme 7 (b) or \cite{Z2}, Proposition 1.1, we deduce for every $\iota$
\begin{equation*}
\begin{split}
\psi (u_{\iota}y^*yu_{\iota})=\;&\varphi_a(u_{\iota}y^*yu_{\iota})
=\varphi( a^{1/2}u_{\iota}y^*yu_{\iota}a^{1/2})
=\| (yu_{\iota}a^{1/2})_{\varphi}\|^2 \\
=\;&\| J_{\varphi} \pi_{\varphi}\big( \sigma^{\varphi}_{- i/2}(a^{1/2}u_{\iota})\big) J_{\varphi} y_{\varphi}\|^2
=\| J_{\varphi} \pi_{\varphi}(a^{1/2}) \pi_{\varphi}\big(\sigma^{\varphi}_{- i/2}(u_{\iota})\big)
J_{\varphi} y_{\varphi}\|^2
\end{split}
\end{equation*}
Since $u_{\iota}y^*yu_{\iota} \xrightarrow{\;\; \iota\;\,} y^*y$ and
$\sigma^{\varphi}_{-i/2}(u_\iota) \xrightarrow{\;\; \iota\;\,}1_M$ in the $s^*$-topology,
and $\psi$ is lower

\noindent semicontinuous in the $s^*$-topology, we get
\medskip

\centerline{$\displaystyle
\psi (y^*y)\leq \varliminf\limits_{\iota} \psi (u_{\iota}y^*yu_{\iota}) =\varliminf\limits_{\iota}
\| J_{\varphi} \pi_{\varphi}(a^{1/2}) \pi_{\varphi}\big(\sigma^{\varphi}_{- i/2}(u_{\iota})\big)
J_{\varphi} y_{\varphi}\|^2$}

\noindent\hspace{2.53 cm}$\displaystyle
=\| J_{\varphi}  \pi_{\varphi}(a^{1/2}) J_{\varphi} y_{\varphi}\|^2\;\! .$
\medskip

\noindent Applying now \cite{Z2}, Corollary 1.2, we conclude :
\begin{equation}\label{ineq-phi}
\psi (y^*y)\leq \| (ya^{1/2})_{\varphi}\|^2 =\varphi (a^{1/2}y^*ya^{1/2})
=\varphi_a (y^*y)\;\! .
\end{equation}

In order to have (\ref{ineq-phi_a}) proved, we should show that (\ref{ineq-phi}) actually holds for every
$y\in\mathfrak{N}_{\varphi_a}\;\!$. This follows by repeating word for word the
corresponding part of the proof of \cite{Z2}, Lemma 2.1. We report it for sake of completeness.
\smallskip

For every $y\in\mathfrak{N}_{\varphi}\;\!$, since $s(a)\in M^{\varphi}$ and
$\mathfrak{N}_{\varphi} M^{\varphi}\subset \mathfrak{N}_{\varphi}\;\!$,
(\ref{ineq-phi}) yields
\smallskip

\centerline{$\psi \big( (1_M-s(a))y^*y(1_M-s(a))\big)\leq
\varphi \big( a^{1/2}(1_M-s(a))y^*y(1_M-s(a))a^{1/2}\big) =0\;\! .$}
\smallskip

\noindent $\mathfrak{N}_{\varphi}$ being $w$-dense in $M$, it follows
$\psi \big( 1_M-s(a)\big)=0\;\!$, what means $s(\psi )\leq s(a)\;\!$.
\smallskip

For $n\geq 1$ we consider the projection $e_n=\chi_{[1/n\;\! ,+\infty)}(a)\in
M^{\varphi}$, where $\chi_{\substack{ {} \\ [1/n\;\! ,+\infty)}}$ stands for the characteristic
function of $[1/n\;\! ,+\infty )\;\!$. Then $e_n \nearrow s(a)\;\!$.
We consider also the inverse $b_n$ of $a^{1/2}e_n$ in the reduced algebra
$e_nM^{\varphi}e_n\;\!$: $b_n=f_n(a)\in M^{\varphi}$ with
$\displaystyle f_n(t)=\frac 1{\;\! \sqrt{t}\;\!}\;\! \chi_{\substack{ {} \\ [1/n\;\! ,+\infty)}}(t)\;\!$.

Now let $y\in\mathfrak{N}_{\varphi_a}$ be arbitrary. Then $y\;\! a^{1/2}\in\mathfrak{N}_{\varphi}\;\!$, so
\medskip

\centerline{$y\;\! e_n =(y\;\! a^{1/2})\;\! b_n\in \mathfrak{N}_{\varphi}M^{\varphi}\subset
\mathfrak{N}_{\varphi}\;\! ,\qquad n\geq 1\;\! .$}
\medskip

\noindent Applying (\ref{ineq-phi}) and \cite{Z2}, Corollary 1.2, we obtain for every $n\geq 1$
\begin{equation*}
\begin{split}
\psi (e_ny^*y\;\! e_n)\leq\;& \varphi (a^{1/2}e_ny^*y\;\! e_na^{1/2}) =\| (y\;\! e_na^{1/2})_{\varphi}\|^2
=\| (y\;\! a^{1/2}e_n)_{\varphi}\|^2 \\
=\;&\| J_{\varphi} \pi_{\varphi}(e_n) J_{\varphi} (y\;\! a^{1/2})_{\varphi} \|^2
\end{split}
\end{equation*}
Since $s(\psi )\leq s(a)\;\! , e_n \nearrow s(a)$ and $\psi$ is lower semicontinuous in the
$s^*$-topology, it follows
\begin{equation*}
\begin{split}
\psi (y^*y)=\psi \big( s(a)\;\! y^*y\;\! s(a)\big) \leq\;& \varliminf\limits_{n\to\infty}\psi (e_ny^*y\;\! e_n) \\
\leq\;& \varliminf\limits_{n\to\infty} \| J_{\varphi} \pi_{\varphi}(e_n) J_{\varphi} (y\;\! a^{1/2})_{\varphi} \|^2 \\
=\;&  \| J_{\varphi} \pi_{\varphi}\big( s(a)\big) J_{\varphi} (y\;\! a^{1/2})_{\varphi} \|^2\;\! .
\end{split}
\end{equation*}
Applying \cite{Z2}, Corollary 1.2 again, we conclude :
\begin{equation*}
\psi (y^*y)\leq \|  \big( y\;\! a^{1/2}s(a)\big)_{\varphi} \|^2=\| (y\;\! a^{1/2})_{\varphi} \|^2
=\varphi (a^{1/2}y^*y\;\! a^{1/2}) =\varphi_a(y^*y)\;\! .
\end{equation*}

To complete the proof of the theorem, we have to find a $\sigma^{\varphi}$-invariant, hereditary
${}^*$-subalgebra $\mathfrak{M}_0$ of $\mathfrak{M}_{{\varphi}_a}$ such that
\medskip

\noindent\hspace{3.8 cm}$\mathfrak{M}\cap M^+\subset \mathfrak{M}_0\cap M^+\;\! ,$
\smallskip

\centerline{$\psi (b)={\varphi}_a (b)\;\! ,\qquad b\in \mathfrak{M}_0\cap M^+\;\! .$}
\smallskip

For we first notice :
\begin{itemize}
\item[(i)] $\{ b\in \mathfrak{M}_{\varphi_a}\cap M^+ ; \psi (b)=\varphi_a(b)\}\subset M$ is a face.
\item[(ii)] $\psi (b)={\varphi}_a (b)$ for all $b\in\mathfrak{M}\cap M^+$.
\end{itemize}

Since $\{ b\in \mathfrak{M}_{\varphi_a}\cap M^+ ; \psi (b)=\varphi_a(b)\}$ is clearly a convex cone,
for (i) we have only to verify the implication
\begin{equation}\label{eq-face}
M^+\ni b\leq c\in \mathfrak{M}_{\varphi_a}\cap M^+,\;\! \psi (c)=\varphi_a(c)
\Longrightarrow \psi (b) =\varphi_a(b)\;\! .
\end{equation}
It follows immediately by using
\medskip

\centerline{$\displaystyle \begin{array}{c}
\psi (b)\leq \varphi_a(b)\;\! ,\quad \psi (c-b)\leq \varphi_a(c-b)\;\! , \\
\psi (b)+\psi (c-b)=\psi (c)=\varphi_a(c)=\varphi_a(b)+\varphi_a(c-b)<+\infty\;\! .
\end{array}$}
\medskip

For (ii) let $b\in\mathfrak{M}\cap M^+$ be arbitrary. Without loss of generality we can assume
that $\| b\|\leq 1\;\!$.
Denoting $b_n:=1_M-(1_M-b)^n \in\mathfrak{M}\cap M^+,n\geq 1\;\!\;\! ,$ we obtain an

\noindent increasing sequence $(b_n)_{n\geq 1}$ which is $s^*$-convergent to the support $s(b)$
of $b$ (see e.g.

\noindent \cite{SZ1}, Section 2.22). Since all $b_n$ belong to the commutative $C^*$-subalgebra
of $M$

\noindent generated by $b\;\!$, the sequence $(b_nb\;\! b_n)_{n\geq 1}$ is still increasing and it is
$s^*$-convergent to $b\;\!$. Now we deduce successively :
\begin{itemize}
\item $\psi (b_n b_n)=\varphi_a(b_n b_n)$ for all $n\geq 1$ by the assumption on $\mathfrak{M}\;\!$;
\item $\psi (b_nb\;\! b_n)=\varphi_a(b_nb\;\! b_n)$ for all $n\geq 1$ by applying (\ref{eq-face}) with
$b=b_nb\;\! b_n$ and $c=b_nb_n\;\!$;
\item $\displaystyle \psi (b)=\lim\limits_{n\to\infty} \psi (b_nb\;\! b_n)=
\lim\limits_{n\to\infty} \varphi_a(b_nb\;\! b_n) =\varphi_a(b)$ by the normality of $\psi$ and $\varphi_a\;\!$.
\end{itemize}

Now we set
\smallskip

\centerline{$\displaystyle \mathfrak{F}_0:=\big\{ b\in \mathfrak{M}_{{\varphi}_a}\;\! ; 0\leq b
\leq c\text{ for some }c\in \mathfrak{M}\cap M^+\big\}\;\! .$}
\smallskip

\noindent\hspace{2.31 cm}$\mathfrak{N}_0:=\{ x\in M\;\! ; x^*x\in \mathfrak{F}_0\}\;\! ,$
\smallskip

\noindent\hspace{2.24 cm}$\mathfrak{M}_0:=\text{ linear span of }\mathfrak{N}_0^*\mathfrak{N}_0\;\! .$
\medskip

\noindent Then $\mathfrak{F}_0$ is a face, $\mathfrak{M}_0$ is a ${}^*$-subalgebra of $M$,
$\mathfrak{M}_0\cap M^+=\mathfrak{F}_0\;\!$, and $\mathfrak{M}_0$ is the linear span of
$\mathfrak{F}_0$ (see e.g. \cite{SZ1}, Proposition 3.21).
Thus $\mathfrak{M}_0$ is a hereditary ${}^*$-subalgebra of $\mathfrak{M}_{{\varphi}_a}$ and
$\mathfrak{M}\cap M^+\subset \mathfrak{F}_0=\mathfrak{M}_0\cap M^+\;\!$.
Since $\mathfrak{M}\cap M^+$ is $\sigma^{\varphi}$-invariant, also $\mathfrak{F}_0\;\!$,

\noindent and therefore $\mathfrak{M}_0$ is $\sigma^{\varphi}$-invariant.
Finally, the above (ii) and (i) imply that we have $\psi (b)={\varphi}_a (b)$ for all
$b\in \mathfrak{F}_0\;\!$.

\end{proof}

\begin{remark}
If $a$ is assumed only affiliated to $M^{\varphi}$ and not necessarily bounded, the

\noindent statement of Theorem \ref{ineq} is not more true. Counterexamples can be obtained using
\cite{PT}, Proposition 7.8 or \cite{Co2}, Exemple 8.

In both papers two faithful, semi-finite, normal weights $\psi_0\;\! ,\psi$ are constructed
on $B(\ell^2)$ such that
\smallskip

\centerline{$\psi_0\leq \psi\text{ and }\psi_0\neq \psi\;\! , \text{but }\psi_0(x)=\psi (x)\text{ for }x\in
\mathfrak{M}\cap M^+\;\! ,$}
\smallskip

\noindent where $\mathfrak{M}$ is a $w$-dense ${}^*$-subalgebra of ${\mathfrak{M}}_{\psi}$
(in \cite{Co2}, Exemple 8, the construction

\noindent delivers $\mathfrak{M}={\mathfrak{M}}_{\psi}$).

Now let $\varphi$ be a faithful, semi-finite, normal trace on $B(\ell^2)\;\!$.
By \cite{PT}, Theorem

\noindent 5.12 there exists a positive, self-adjoint operator $A$ on $\ell^2$,
necessarily affiliated to

\noindent $B(\ell^2)^{\varphi}=B(\ell^2)\;\!$, such that $\psi_0=\varphi_{\! \substack{ {} \\ A}}\;\!$.
Then
\begin{itemize}
\item $\varphi$ is a faithful, semi-finite, normal trace on $M=B(\ell^2)\;\!$,
\item $A$ is a positive, self-adjoint operator affiliated to $M^{\varphi}=B(\ell^2)\;\!$,
\item $\psi$ is a $\sigma^{\varphi}$-invariant, faithful, semi-finite, normal weight on $M$,
\item $\psi (x^*x)=\varphi_{\! \substack{ {} \\ A}}(x^*x)$ for $x\in\mathfrak{M}\;\!$, where $\mathfrak{M}$
is a $w$-dense ${}^*$-subalgebra of

\noindent $\mathfrak{M}_{\psi}\subset \mathfrak{M}_{{\psi}_0}=\mathfrak{M}_{\varphi_{\! \substack{ {} \\ A}}}\;\!$,
\end{itemize}
but $\psi \nleq \varphi_{\! \substack{ {} \\ A}}\;\!$, because otherwise it would follow $\psi\leq
\varphi_{\! \substack{ {} \\ A}} =\psi_0\;\!$, hence $\psi =\psi_0\;\!$, in

\noindent contradiction to $\psi\neq\psi_0\;\!$.

\end{remark}

\begin{remark}
If in Theorem \ref{ineq} it is additionally assumed that $1_M-s(\psi )$ belongs to the $w$-closure of
$\{ y\in \mathfrak{M}_{\varphi}\;\! ; y\;\! s(\psi )=0\}$ (that happens, for example, if $s(\psi )\in M^{\varphi}_{\infty}\;\!$,
because $\mathfrak{M}_{\varphi} \mathfrak{M}^{\varphi}_{\infty}\subset\mathfrak{M}_{\varphi}$), then it
follows also the equality $s(\psi )=s(a)\;\!$.

Since $s(\psi )\leq s(a)$ trivially, we have to verify that for any $y\in\mathfrak{M}_{\varphi}$ with
$y\;\! s(\psi )=0\;\!$, that is with $\psi (y^*y)=0\;\!$, we have $y\;\! s(a)=0\;\!$. We can argue as in
the proof of \cite{PT}, Lemma 5.2 :

By (\ref{conv.u}), by the lower semicontinuity of ${\varphi}_a$ in the $s^*$-topology, and by (\ref{left-right-u}),
we have

\centerline{$\displaystyle {\varphi}_a(y^*y)\leq \varliminf\limits_{\iota} {\varphi}_a (u_{\iota}y^*yu_{\iota}) =
\varliminf\limits_{\iota}\psi (u_{\iota}y^*yu_{\iota})\;\! .$}
\smallskip

\noindent Using now the inequalities
\begin{equation*}
\begin{split}
u_{\iota}y^*yu_{\iota}\leq\;& \big( 2\cdot 1_M-u_{\iota}\big) y^*y \big( 2\cdot 1_M-u_{\iota}\big) +
u_{\iota}y^*yu_{\iota} \\
=\;&2 \Big( (1_M-u_{\iota}) y^*y (1_M-u_{\iota}) +y^*y\Big)\;\! ,
\end{split}
\end{equation*}
and $\psi\leq {\varphi}_a\;\!$, as well as \cite{Co2}, Lemme 7 (b) or \cite{Z2}, Proposition 1.1, we obtain
\begin{equation*}
\begin{split}
{\varphi}_a(y^*y)\leq\;& 2 \varliminf\limits_{\iota}\psi \Big( (1_M-u_{\iota}) y^*y (1_M-u_{\iota}) \Big) \\
\leq\;& 2 \varliminf\limits_{\iota} {\varphi}_a \Big( (1_M-u_{\iota}) y^*y (1_M-u_{\iota}) \Big)
=2 \varliminf\limits_{\iota}\big\| \big(y (1_M-u_{\iota}) a^{1/2}\big)_{\varphi}\big\|^2 \\
=\;&2 \varliminf\limits_{\iota}
\big\| J_{\varphi} \pi_{\varphi}\big( \sigma^{\varphi}_{- i/2}\big( a^{1/2}(1_M-u_{\iota})\big)\big)
J_{\varphi} y_{\varphi}\big\|^2 \\
=\;&2 \varliminf\limits_{\iota}
\big\| J_{\varphi} \pi_{\varphi}(a^{1/2}) \pi_{\varphi}\big( 1_M-\sigma^{\varphi}_{- i/2}(u_{\iota})\big)
J_{\varphi} y_{\varphi} \big\|^2
\end{split}
\end{equation*}
Since, by (\ref{conv.u}), $\sigma^{\varphi}_{- i/2}(u_{\iota}) \xrightarrow{\;\; \iota\;\,}1_M$ in the
$s^*$-topology, we conclude that ${\varphi}_a(y^*y)=0\;\!$, what is equivalent to $y\;\! a^{1/2}=0
\Longleftrightarrow y\;\! s(a)=0\;\!$.
\end{remark}

The next theorem is a slight extension of \cite{Z2}, Theorem 2.3 :

\begin{theorem}\label{eq-1}
Let $M$ be a $W^*$-algebra, $\varphi$ a faithful, semi-finite, normal weight on $M$, $a\in (M^{\varphi})^+$,
and $\psi$ a $\sigma^{\varphi}$-invariant, normal weight on $M$. If there exists a $w$-dense,
$\sigma^{\varphi}$-invariant ${}^*$-subalgebra $\mathfrak{M}$ of $\mathfrak{M}_{{\varphi}_a}$ such that
\medskip

\centerline{$\psi (x^*x)={\varphi}_a(x^*x)\;\! ,\qquad x\in\mathfrak{M}\;\! ,$}
\smallskip

\noindent then

\centerline{$\psi ={\varphi}_a\;\! .$}
\end{theorem}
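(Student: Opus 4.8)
The plan is to derive Theorem~\ref{eq-1} from Theorem~\ref{ineq} by proving the reverse inequality $\varphi_a\leq\psi$. Theorem~\ref{ineq} already gives $\psi\leq\varphi_a$, so it suffices to show $\varphi_a(y^*y)\leq\psi(y^*y)$ for all $y\in M$; since $\varphi_a$ is $\sigma^\varphi$-invariant and, by hypothesis, $\psi$ is $\sigma^\varphi$-invariant, both weights commute with the same modular flow, and one expects the two inequalities together to force equality.

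\medskip

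First I would record from Theorem~\ref{ineq} not only $\psi\leq\varphi_a$ but also the existence of a $\sigma^\varphi$-invariant hereditary $*$-subalgebra $\mathfrak M_0\subset\mathfrak M_{\varphi_a}$ with $\mathfrak M\cap M^+\subset\mathfrak M_0\cap M^+$ and $\psi(b)=\varphi_a(b)$ for $b\in\mathfrak M_0\cap M^+$; the linear span of $\mathfrak M_0\cap M^+$ is $\mathfrak M_0$, which is $w$-dense (it contains the $w$-dense $\mathfrak M$). Next, reversing the roles: I want to apply an argument symmetric to the proof of Theorem~\ref{ineq}, but now with $\psi$ playing the role of the "faithful reference weight." The obstacle is that $\psi$ need not be faithful; however, since $s(\psi)\le s(a)$ from the proof of Theorem~\ref{ineq} (and in fact one can arrange to work on $s(\psi)Ms(\psi)$, or compare supports directly), one reduces to the faithful case on the reduced algebra. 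Because $\psi$ is $\sigma^\varphi$-invariant and $\varphi_a$ is $\sigma^\varphi$-invariant with $a\in(M^\varphi)^+$, the Connes cocycle $[D\varphi_a:D\psi]_t$ lies in $M^\varphi$, and one can use the regularizing net machinery of Section~2 adapted to $\psi$: take $a_\iota\in\mathfrak M$ with $0\le a_\iota\le 1_M$, $a_\iota\xrightarrow{s^*}1_M$, mollify to get $u_\iota\in M^\varphi_\infty$, and exploit $\psi(u_\iota y^*yu_\iota)=\varphi_a(u_\iota y^*yu_\iota)$ together with lower semicontinuity of $\varphi_a$ to obtain $\varphi_a(y^*y)\le\varliminf_\iota\varphi_a(u_\iota y^*yu_\iota)=\varliminf_\iota\psi(u_\iota y^*yu_\iota)$, and then bound the latter by $\psi(y^*y)$ using the $\sigma^\varphi$-invariance of $\psi$ and a GNS-norm estimate exactly as in the proof of Theorem~\ref{ineq} but with $\varphi$ replaced by $\psi$.

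\medskip

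More concretely, I expect the cleanest route is: by Theorem~\ref{ineq}, $\psi\le\varphi_a$, hence $\mathfrak N_{\varphi_a}\subset\mathfrak N_\psi$ and there is a natural contraction $J:H_{\varphi_a}\to H_\psi$ intertwining the canonical images. Since $\psi$ is $\sigma^\varphi$-invariant, $\psi=\varphi_b$ for some positive self-adjoint $b$ affiliated to $M^\varphi$ with $s(b)=s(\psi)\le s(a)$ (this is the key structural input, available from the theory of $\sigma^\varphi$-invariant weights dominated by $\varphi_a$, cf.\ \cite{PT}, Section~4). Then $\psi(x^*x)=\varphi_a(x^*x)$ for $x\in\mathfrak M$ reads $\varphi(b^{1/2}x^*xb^{1/2})=\varphi(a^{1/2}x^*xa^{1/2})$; using that $a,b$ are affiliated to the abelian-relative-commutant situation inside $M^\varphi$ and the $w$-density of $\mathfrak M$, one deduces $a=b$ on the relevant support, hence $\varphi_a=\varphi_b=\psi$.

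\medskip

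The main obstacle will be handling the non-faithfulness and non-semifiniteness of $\psi$ a priori: a priori $\psi$ is only a normal weight, so one must first deploy the part of Theorem~\ref{ineq} giving $s(\psi)\le s(a)$ and the hereditary subalgebra $\mathfrak M_0$ to see that $\psi$ restricted to $s(\psi)Ms(\psi)$ is faithful and that $\mathfrak M_0$ witnesses semi-finiteness, so that the structure theorem for $\sigma^\varphi$-invariant weights (the identification $\psi=\varphi_b$) can be invoked. Once $\psi=\varphi_b$ with $b$ affiliated to $M^\varphi$ is in hand, the remaining step — upgrading $\varphi(b^{1/2}x^*xb^{1/2})=\varphi(a^{1/2}x^*xa^{1/2})$ on a $w$-dense $\mathfrak M$ to $a=b$ — is a routine density and functional-calculus argument inside $M^\varphi$, exactly parallel to the final paragraphs of the proof of Theorem~\ref{ineq}.
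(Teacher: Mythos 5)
Your overall skeleton agrees with the paper's for the first half: Theorem \ref{ineq} gives $\psi\leq\varphi_a$, hence $\psi$ is semi-finite, and then the Pedersen--Takesaki Radon--Nikodym theorem (\cite{PT}, Theorem 5.12) represents the $\sigma^{\varphi}$-invariant weight $\psi$ as $\varphi_{A}$ for some positive self-adjoint $A$ affiliated to $M^{\varphi}$. The gap is in your closing step. You claim that upgrading $\varphi(A^{1/2}x^*xA^{1/2})=\varphi(a^{1/2}x^*x\,a^{1/2})$ for $x$ in the $w$-dense $\mathfrak{M}$ to $A=a$ (on the relevant support) is ``a routine density and functional-calculus argument inside $M^{\varphi}$.'' It is not: the maps $x\mapsto\varphi_{A}(x^*x)$ and $x\mapsto\varphi_a(x^*x)$ are not $w$-continuous, so no density argument passes the equality from $\mathfrak{M}$ to all of $M$, and the Remark following Theorem \ref{ineq} in the paper exhibits exactly this configuration ($\varphi$ a trace, $\psi=\varphi$-invariant, agreement on a $w$-dense ${}^*$-subalgebra of $\mathfrak{M}_{\psi}$) with $\varphi_{A}\neq\psi$ when $A$ is unbounded. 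In other words, the step you call routine is precisely the content of the theorem being proved, and it is false without further input.

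The repair — which is what the paper does — has two ingredients you omit. First, from $\varphi_{A}=\psi\leq\varphi_a$ one deduces $A\leq a$ (\cite{Z2}, Lemma 2.2), so in particular $A$ is \emph{bounded}, i.e. $A\in(M^{\varphi})^+$; this is essential, since Theorem \ref{ineq} is only available for bounded elements of the centralizer. Second, $\varphi_{A}\leq\varphi_a$ gives $\mathfrak{M}\subset\mathfrak{M}_{\varphi_a}\subset\mathfrak{M}_{\varphi_{A}}$, so Theorem \ref{ineq} applies a \emph{second} time with the roles exchanged — $\varphi$ still the faithful reference weight, $A$ in place of $a$, and $\varphi_a$ in place of $\psi$ — yielding $\varphi_a\leq\varphi_{A}=\psi$ and hence equality. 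Note that in this second application the faithful reference weight remains $\varphi$; your second paragraph's plan of rerunning the regularization with $\psi$ as the reference weight (reducing to $s(\psi)Ms(\psi)$, Connes cocycles, etc.) is unnecessary and would require justifying $\sigma^{\psi}$-invariance of the subalgebra, which is not among the hypotheses.
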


\begin{proof}
By Theorem \ref{ineq} we have $\psi\leq\varphi_a\;\!$. In particular, $\psi$ is semi-finite.

On the other hand, by \cite{PT}, Theorem 5.12 there exists a positive, self-adjoint operator
$A\;\!$, affiliated to $M^{\varphi}$, such that $\psi =\varphi_{\! \substack{ {} \\ A}}\;\!$.
Since $\varphi_{\! \substack{ {} \\ A}} =\psi\leq\varphi_a\;\!$,
Lemma \cite{Z2}, 2.2 yields $A\leq a\;\!$. In particular, $A$ is bounded.

Since $\mathfrak{M}_{\varphi_{\! \substack{ {} \\ A}}}$ is the linear span of
$\{ b\in M^+ : \varphi_{\! \substack{ {} \\ A}}(b)<+\infty\}\;\!$, $\mathfrak{M}_{\varphi_a}$ is
the linear span of $\{ b\in M^+ : \varphi_a(b)<+\infty\}\;\!$, and $\varphi_{\! \substack{ {} \\ A}}
\leq\varphi_a\;\!$, we have $\mathfrak{M}\subset \mathfrak{M}_{\varphi_a}\subset
\mathfrak{M}_{\varphi_{\! \substack{ {} \\ A}}}\;\!$. Therefore we can apply Theorem \ref{ineq}
again and deduce that $\varphi_a\leq \varphi_{\! \substack{ {} \\ A}}=\psi\;\!$.

\end{proof}

An equivalent, but slightly more symmetric form of Theorem \ref{eq-1} is

\begin{theorem}\label{eq-2}
Let $M$ be a $W^*$-algebra, $\varphi$ a faithful, semi-finite, normal weight on $M$, $a\;\! ,b\in (M^{\varphi})^+$,
and $\psi$ a $\sigma^{\varphi}$-invariant, normal weight on $M$. If there exists a $w$-dense,
$\sigma^{\varphi}$-invariant ${}^*$-subalgebra $\mathfrak{M}$ of $\mathfrak{M}_{{\varphi}_a}$ such that
\medskip

\centerline{$\psi_b (x^*x)={\varphi}_a(x^*x)\;\! ,\qquad x\in\mathfrak{M}\;\! ,$}
\smallskip

\noindent then

\centerline{$\psi_b ={\varphi}_a\;\! .$}
\end{theorem}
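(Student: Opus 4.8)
The plan is to derive Theorem~\ref{eq-2} directly from Theorem~\ref{eq-1} by a change of perspective: since $\psi$ is $\sigma^{\varphi}$-invariant and $b\in (M^{\varphi})^+$, the weight $\psi_b=\psi(b^{1/2}\,\cdot\,b^{1/2})$ is again normal; the point is to rewrite it as a ``$\varphi$-derived'' weight with a positive element of $M^{\varphi}$ so that Theorem~\ref{eq-1} applies verbatim. The natural route is to work on the reduced algebra $e M e$, where $e=s(b)=s(\psi_b)$ together with the reduction of $\psi$, and to observe that on that reduced algebra $b$ is invertible, so one can conjugate it away.

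\medskip\noindent\textbf{Step 1.} First I would record that $b^{1/2}\in M^{\varphi}$, hence the map $x\mapsto x b^{1/2}$ intertwines the relevant structures: $\sigma^{\varphi}$-invariance of $\mathfrak{M}$ is preserved under left and right multiplication by $b^{1/2}$, and $\psi_b(x^*x)=\psi\big((x b^{1/2})^*(x b^{1/2})\big)$. Thus the hypothesis $\psi_b(x^*x)=\varphi_a(x^*x)$ for $x\in\mathfrak M$ says precisely that $\psi$ and $\varphi_a$ agree, after the substitution $y=x b^{1/2}$, on the $*$-subalgebra generated by $\mathfrak M b^{1/2}$. The difficulty is that $\mathfrak M b^{1/2}$ need not be $w$-dense in $M$ (it lives inside $\overline{M b^{1/2}}=M s(b)$), so Theorem~\ref{eq-1} cannot be invoked on all of $M$; one must pass to $s(b)M s(b)$.

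\medskip\noindent\textbf{Step 2.} On the reduced algebra $N:=s(b)M s(b)$ the restriction $\varphi|_N$ is still a faithful, semi-finite, normal weight (since $s(b)\in M^{\varphi}$), with $\sigma^{\varphi|_N}_t=\sigma^{\varphi}_t|_N$, and $b$ becomes an invertible positive element of $N^{\varphi|_N}$ with bounded inverse $b^{-1}\in N^{\varphi|_N}$ (interpreted via functional calculus restricted to $s(b)$). The key identity to verify is
\begin{equation*}
\psi(y)=\psi_b\big(b^{-1/2}\,y\,b^{-1/2}\big)=\varphi_a\big(b^{-1/2}\,y\,b^{-1/2}\big)=\varphi_{c}(y),\qquad y\in N^+,
\end{equation*}
where, using that $a,b^{-1/2}\in M^{\varphi}$ commute appropriately and $\varphi_a(b^{-1/2}\,\cdot\,b^{-1/2})=\varphi(a^{1/2}b^{-1/2}\,\cdot\,b^{-1/2}a^{1/2})=\varphi_{c}$ with $c:=b^{-1/2}a b^{-1/2}\in (N^{\varphi|_N})^+$ (this is where one uses $\overline{AB}$-type composition rules for derived weights, Pedersen–Takesaki Proposition 4.3, together with boundedness of $b^{-1/2}$). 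So on $N$ the statement becomes: $\psi|_N$ is $\sigma^{\varphi}$-invariant and normal, and agrees with $(\varphi|_N)_c$ on $\sigma^{\varphi}$-invariant $w$-dense $\mathfrak M_0:=b^{1/2}\mathfrak M b^{1/2}\subset\mathfrak M_{(\varphi|_N)_c}$ — which is exactly the hypothesis of Theorem~\ref{eq-1} applied inside $N$. Hence $\psi|_N=(\varphi|_N)_c$ on $N$.

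\medskip\noindent\textbf{Step 3.} Finally I would translate the equality back to $M$. From $\psi(1_M-s(b))$: since $\varphi_a(x^*x)=\psi_b(x^*x)$ and $b$ annihilates $1_M-s(b)$, one first shows $s(\psi)=s(\psi_b)=s(b)$ is not automatic — rather, what is automatic is $s(\psi_b)\le s(b)$, so $\psi_b$ and $\varphi_a$ both live on $N$, and the equality $\psi|_N=(\varphi|_N)_c$ on $N$ yields, conjugating back by $b^{1/2}$, exactly $\psi_b(y)=\varphi_a(y)$ for $y\in N^+$; for $y\in M^+$ with $y\not\le$ supported in $N$ one uses $\psi_b(y)=\psi_b(s(b)y s(b))$ and $\varphi_a(y)=\varphi_a(s(b)y s(b))$ (because $s(b)=s(a)$ on the relevant support, or directly because $b^{1/2}y b^{1/2}=b^{1/2}\,s(b)y s(b)\,b^{1/2}$ and similarly for $a$), giving $\psi_b=\varphi_a$ on all of $M^+$.

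\medskip
I expect the main obstacle to be the bookkeeping in Step~2: verifying that $b^{-1/2}$ (defined only on $s(b)$, but there bounded) can legitimately be pulled through the derived-weight construction so that $\varphi_a(b^{-1/2}\,\cdot\,b^{-1/2})=(\varphi|_N)_{c}$ with $c=b^{-1/2}ab^{-1/2}$, and checking that $\mathfrak M_0=b^{1/2}\mathfrak M b^{1/2}$ is genuinely $w$-dense in $N$ (this uses $w$-density of $\mathfrak M$ in $M$ together with the fact that multiplication by the fixed elements $b^{1/2},b^{1/2}$ is $w$-continuous and $b^{1/2}Mb^{1/2}$ is $w$-dense in $N=s(b)Ms(b)$). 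Everything else is a formal consequence of Theorem~\ref{eq-1} and the elementary identities for $\varphi_a$.
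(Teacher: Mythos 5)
You have missed the one--line argument that the paper intends here: since $b^{1/2}\in M^{\varphi}$ one has $\sigma^{\varphi}_t(b^{1/2}x\;\!b^{1/2})=b^{1/2}\sigma^{\varphi}_t(x)\;\!b^{1/2}$, hence $\psi_b$ is itself a $\sigma^{\varphi}$-invariant normal weight on $M$; the hypothesis of Theorem \ref{eq-2} is then \emph{literally} the hypothesis of Theorem \ref{eq-1} with $\psi$ replaced by $\psi_b\;\!$, and the conclusion $\psi_b=\varphi_a$ follows with no reduction whatsoever. Nothing in Theorem \ref{eq-1} requires the invariant weight to be presented as a ``plain'' $\psi$ rather than as $\psi_b\;\!$; the whole apparatus of passing to $s(b)Ms(b)$ and conjugating $b$ away is unnecessary.

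Beyond being unnecessary, your reduction has genuine gaps. First, $\mathfrak{M}_0=b^{1/2}\mathfrak{M}\;\!b^{1/2}$ is in general not a ${}^*$-subalgebra: a product $(b^{1/2}x\;\!b^{1/2})(b^{1/2}y\;\!b^{1/2})=b^{1/2}(x\;\!b\;\!y)\;\!b^{1/2}$ requires $x\;\!b\;\!y\in\mathfrak{M}\;\!$, which is not guaranteed since $b\notin\mathfrak{M}$ and $\mathfrak{M}$ is not an ideal; so Theorem \ref{eq-1} cannot be applied to $\mathfrak{M}_0\;\!$. Second, $a$ and $b$ are \emph{not} assumed to commute, so the identification $\varphi_a(b^{-1/2}\,\cdot\,b^{-1/2})=\varphi_c$ with $c=b^{-1/2}a\;\!b^{-1/2}$ is not an instance of the Pedersen--Takesaki composition rule $(\varphi_A)_B=\varphi_{\overline{AB}}$ (which requires commuting operators); one would instead need the identity $\varphi(d^*\!\cdot d)=\varphi_{\,dd^*}(\,\cdot\,)$ for a non-normal $d=b^{-1/2}a^{1/2}\in M^{\varphi}$, which you neither state nor prove, and which in any case only matches $\varphi_a$ when $s(a)\leq s(b)\;\!$. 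Third, and most seriously, restricting everything to $N=s(b)Ms(b)$ silently discards the part of $\varphi_a$ supported on $1_M-s(b)\;\!$: the equality $\varphi_a\big((1_M-s(b))\,\cdot\,(1_M-s(b))\big)=0\;\!$, i.e.\ $s(a)\leq s(b)\;\!$, is essentially equivalent to the conclusion $\psi_b=\varphi_a$ (since $s(\psi_b)\leq s(b)$ automatically) and cannot be read off the hypothesis before the theorem is proved; your Step 3 asserts ``$s(b)=s(a)$ on the relevant support'' without justification, which makes the argument circular at exactly the delicate point. The displayed ``key identity'' of Step 2, stated for all $y\in N^+$, is likewise the conclusion rather than a consequence of the hypothesis, which only controls $\psi_b-\varphi_a$ on $\mathfrak{M}\;\!$.
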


\begin{proof}
Since $\psi$ is $\sigma^{\varphi}$-invariant and $b\in (M^{\varphi})^+$, the normal weight $\psi_b$ is still

\noindent $\sigma^{\varphi}$-invariant : we have for every $t\in\mathbb{R}$ and every $x\in M^+$
\medskip

\centerline{$\psi_b\big( \sigma^{\varphi}_t(x)\big) =\psi\big( b^{1/2}\sigma^{\varphi}_t(x)b^{1/2}\big)
=\psi \big( \sigma^{\varphi}_t( b^{1/2}xb^{1/2})\big) =\psi \big( b^{1/2}xb^{1/2}\big) =\psi_b (x)\;\! .$}
\smallskip

\noindent Thus we can apply Theorem \ref{eq-1} with $\psi$ replaced by $\psi_b\;\!$.

\end{proof}

An immediate consequence of Theorems \ref{eq-1} and \ref{eq-2} is \cite{PT}, Proposition 5.9 :

\begin{corollary}\label{PedTak}
Let $M$ be a $W^*$-algebra, $\varphi$ a faithful, semi-finite, normal weight on $M$,
and $\psi$ a $\sigma^{\varphi}$-invariant, normal weight on $M$. If there exists a $w$-dense,

\noindent $\sigma^{\varphi}$-invariant ${}^*$-subalgebra $\mathfrak{M}$ of $\mathfrak{M}_{\varphi}$
such that
\medskip

\centerline{$\psi (x^*x)={\varphi}(x^*x)\;\! ,\qquad x\in\mathfrak{M}\;\! ,$}
\smallskip

\noindent then

\centerline{$\psi =\varphi\;\! .$}
\end{corollary}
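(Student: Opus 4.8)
The plan is to obtain Corollary~\ref{PedTak} as the special case $a=1_M$ of Theorem~\ref{eq-1}. Indeed, since $1_M\in(M^\varphi)^+$, we may form $\varphi_{1_M}$, and by the remark following \eqref{derived} the notation is consistent: for bounded $a$ one has $\varphi_a(x)=\varphi(a^{1/2}xa^{1/2})$ for all $x\in M^+$, so $\varphi_{1_M}(x)=\varphi(x)$, i.e. $\varphi_{1_M}=\varphi$. In particular $\mathfrak{M}_{\varphi_{1_M}}=\mathfrak{M}_\varphi$.

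Given this identification, the hypotheses of Corollary~\ref{PedTak} translate verbatim into the hypotheses of Theorem~\ref{eq-1} with $a=1_M$: $\varphi$ is a faithful, semi-finite, normal weight on $M$; $\psi$ is a $\sigma^\varphi$-invariant normal weight on $M$; and $\mathfrak{M}$ is a $w$-dense, $\sigma^\varphi$-invariant ${}^*$-subalgebra of $\mathfrak{M}_{\varphi_{1_M}}=\mathfrak{M}_\varphi$ on which $\psi(x^*x)=\varphi(x^*x)=\varphi_{1_M}(x^*x)$. Theorem~\ref{eq-1} then yields $\psi=\varphi_{1_M}=\varphi$, which is the desired conclusion.

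There is essentially no obstacle here; the only point requiring a word of care is the trivial verification that $\varphi_{1_M}=\varphi$ and hence $\mathfrak{M}_{\varphi_{1_M}}=\mathfrak{M}_\varphi$, so that the $w$-density and $\sigma^\varphi$-invariance hypotheses on $\mathfrak{M}$ are literally those demanded by Theorem~\ref{eq-1}. Alternatively, one may equally well invoke Theorem~\ref{eq-2} with $a=b=1_M$, which gives $\psi=\psi_{1_M}=\varphi_{1_M}=\varphi$; the two routes are identical in substance.

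\begin{proof}
Apply Theorem~\ref{eq-1} with $a=1_M\in (M^\varphi)^+$. As remarked after \eqref{derived}, for bounded $a$ the weight $\varphi_a$ is given by $\varphi_a(x)=\varphi(a^{1/2}xa^{1/2})$ for all $x\in M^+$; hence $\varphi_{1_M}=\varphi$, and in particular $\mathfrak{M}_{\varphi_{1_M}}=\mathfrak{M}_\varphi$. Thus $\mathfrak{M}$ is a $w$-dense, $\sigma^\varphi$-invariant ${}^*$-subalgebra of $\mathfrak{M}_{\varphi_{1_M}}$ with $\psi(x^*x)=\varphi(x^*x)=\varphi_{1_M}(x^*x)$ for all $x\in\mathfrak{M}$. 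Since $\psi$ is $\sigma^\varphi$-invariant, Theorem~\ref{eq-1} gives $\psi=\varphi_{1_M}=\varphi$.
\end{proof}
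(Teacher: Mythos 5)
Your derivation is correct and is exactly what the paper intends: the corollary is the special case $a=1_M$ of Theorem~\ref{eq-1}, using that $\varphi_{1_M}=\varphi$ and hence $\mathfrak{M}_{\varphi_{1_M}}=\mathfrak{M}_{\varphi}$. The paper gives no further argument, so your proposal matches its proof.
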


\hfill $\square$
\smallskip

The next result is a balanced counterpart of Theorem \ref{eq-2} :

\begin{theorem}\label{eq-3}
Let $M$ be a $W^*$-algebra, $\varphi\;\! ,\psi$ faithful, semi-finite, normal weights on $M$,
and $a \in (M^{\varphi})^+\;\! ,b\in (M^{\psi})^+$.
Assume that there are a $w$-dense, $\sigma^{\varphi}$-invariant ${}^*$-subalgebra $\mathfrak{M}_1$
of $\mathfrak{M}_{{\varphi}_a}$ and a $w$-dense, $\sigma^{\varphi}$-invariant ${}^*$-subalgebra
$\mathfrak{M}_2$ of $\mathfrak{M}_{{\psi}_b}$ such that
\smallskip

\centerline{$\psi_b (x^*x)={\varphi}_a(x^*x)\;\! ,\qquad x\in\mathfrak{M}_1\cup\mathfrak{M}_2\;\! .$}
\smallskip

\noindent Then

\centerline{$\psi_b ={\varphi}_a\;\! .$}
\end{theorem}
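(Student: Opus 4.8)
The plan is to obtain $\psi_b = \varphi_a$ from two applications of Theorem \ref{ineq}, one producing each of the inequalities $\psi_b \le \varphi_a$ and $\varphi_a \le \psi_b$; the symmetric shape of the hypotheses makes this the natural route, and no idea beyond Theorem \ref{ineq} should be required. First I would record that $\varphi_a = \varphi(a^{1/2}\,\cdot\,a^{1/2})$ is a semi-finite, normal weight on $M$, since $\varphi$ is faithful, semi-finite and normal and $a \in (M^{\varphi})^+$, and likewise $\psi_b = \psi(b^{1/2}\,\cdot\,b^{1/2})$ is a semi-finite, normal weight on $M$. In particular each of $\varphi_a$ and $\psi_b$ is a normal weight of the kind admitted as the ``$\psi$'' in the statement of Theorem \ref{ineq}.

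For the first inequality I would apply Theorem \ref{ineq} with the data $\varphi$, $a$, with $\psi_b$ in the role of $\psi$, and with $\mathfrak{M}_1$ in the role of $\mathfrak{M}$. Indeed $\mathfrak{M}_1$ is a $w$-dense, $\sigma^{\varphi}$-invariant ${}^*$-subalgebra of $\mathfrak{M}_{\varphi_a}$, and $\psi_b(x^*x) = \varphi_a(x^*x)$ holds for every $x \in \mathfrak{M}_1 \subset \mathfrak{M}_1 \cup \mathfrak{M}_2$, so all hypotheses of Theorem \ref{ineq} are met and we conclude $\psi_b \le \varphi_a$.

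For the reverse inequality I would run exactly the same argument with the pairs $(\varphi,a)$ and $(\psi,b)$ interchanged: Theorem \ref{ineq} is applied with $\psi$ as the faithful, semi-finite, normal weight, with $b \in (M^{\psi})^+$ in the role of $a$, with $\varphi_a$ in the role of $\psi$, and with $\mathfrak{M}_2$ in the role of $\mathfrak{M}$. Here $\mathfrak{M}_2$ is $w$-dense, $\sigma^{\psi}$-invariant and contained in $\mathfrak{M}_{\psi_b}$, and $\varphi_a(x^*x) = \psi_b(x^*x)$ for every $x \in \mathfrak{M}_2 \subset \mathfrak{M}_1 \cup \mathfrak{M}_2$; Theorem \ref{ineq} then yields $\varphi_a \le \psi_b$. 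Combining the two inequalities gives $\psi_b = \varphi_a$, as desired.

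I expect no genuine obstacle in this argument: all of the analytic content sits in Theorem \ref{ineq}, and what is left is only the bookkeeping of verifying, for each of the two applications, that the weight playing the role of ``$\psi$'' is normal and that the subalgebra in play carries the modular invariance appropriate to its role. The one point to keep straight is precisely that shift of modular group — in the second application the relevant invariance of $\mathfrak{M}_2$ is with respect to $\sigma^{\psi}$, the modular automorphism group of the weight that now plays the part of ``$\varphi$''. As in Theorem \ref{ineq} itself, note that no $\sigma^{\varphi}$- or $\sigma^{\psi}$-invariance of the \emph{other} weight is invoked at any stage.
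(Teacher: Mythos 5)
Your proposal is correct and is exactly the paper's argument: the paper's entire proof of this theorem reads ``We have just to apply twice Theorem \ref{ineq}'', once with $(\varphi,a,\mathfrak{M}_1)$ and $\psi_b$ in the role of $\psi$, and once with $(\psi,b,\mathfrak{M}_2)$ and $\varphi_a$ in that role. You are also right to insist that the invariance needed of $\mathfrak{M}_2$ in the second application is $\sigma^{\psi}$-invariance; the statement as printed says ``$\sigma^{\varphi}$-invariant'' for $\mathfrak{M}_2$, which is evidently a typo (compare Theorem \ref{eq-4}), and your reading is the one under which the two-sided application of Theorem \ref{ineq} goes through.
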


\begin{proof}
We have just to apply twice Theorem \ref{ineq}.

\end{proof}

Immediate consequences of Theorem \ref{eq-3} are :

\begin{theorem}\label{eq-4}
Let $M$ be a $W^*$-algebra, $\varphi\;\! ,\psi$ faithful, semi-finite, normal weights on $M$,
and $a \in (M^{\varphi})^+\;\! ,b\in (M^{\psi})^+$.
Assume that there exists a $w$-dense, both $\sigma^{\varphi}$- and $\sigma^{\psi}$-invariant 
${}^*$-subalgebra $\mathfrak{M}$ of $\mathfrak{M}_{{\varphi}_a}\cap\mathfrak{M}_{{\psi}_b}$
such that
\smallskip

\centerline{$\psi_b (x^*x)={\varphi}_a(x^*x)\;\! ,\qquad x\in\mathfrak{M}\;\! ,$}
\smallskip

\noindent Then

\centerline{$\psi_b ={\varphi}_a\;\! .$}
\end{theorem}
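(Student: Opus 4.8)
The plan is to deduce the statement directly from the inequality criterion, Theorem \ref{ineq}, applied twice --- once with the roles of $\varphi$ and $\psi$ as given and once with them interchanged --- which is the same as invoking Theorem \ref{eq-3} with $\mathfrak{M}_1=\mathfrak{M}_2=\mathfrak{M}$.

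First I would record that $\psi_b$ and $\varphi_a$ are both normal weights on $M$: indeed, formula (\ref{derived}) yields a normal weight out of any normal weight and any positive element of $M$, and $a\in(M^{\varphi})^+\subset M^+$, $b\in(M^{\psi})^+\subset M^+$. Next, observe that $\mathfrak{M}$, being by hypothesis a $w$-dense, $\sigma^{\varphi}$-invariant $*$-subalgebra of $\mathfrak{M}_{{\varphi}_a}\cap\mathfrak{M}_{{\psi}_b}$, is in particular a $w$-dense, $\sigma^{\varphi}$-invariant $*$-subalgebra of $\mathfrak{M}_{{\varphi}_a}$, and that $\psi_b(x^*x)=\varphi_a(x^*x)$ for all $x\in\mathfrak{M}$. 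Theorem \ref{ineq}, applied with the faithful weight $\varphi$, the element $a\in(M^{\varphi})^+$ and the normal weight $\psi_b$, then gives $\psi_b\leq\varphi_a$.

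For the reverse inequality I would run the same argument with $\varphi$ and $\psi$, and $a$ and $b$, exchanged: $\psi$ is a faithful, semi-finite, normal weight, $b\in(M^{\psi})^+$, $\varphi_a$ is a normal weight, and $\mathfrak{M}$ is now a $w$-dense, $\sigma^{\psi}$-invariant $*$-subalgebra of $\mathfrak{M}_{{\psi}_b}$ on which $\varphi_a(x^*x)=\psi_b(x^*x)$. Hence Theorem \ref{ineq} gives $\varphi_a\leq\psi_b$, and combining the two inequalities yields $\psi_b=\varphi_a$.

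There is essentially no real obstacle: the content is entirely contained in Theorem \ref{ineq} (equivalently Theorem \ref{eq-3}). The only points needing a line of verification are that $\psi_b$ and $\varphi_a$ are indeed normal weights, so that Theorem \ref{ineq} is applicable, and that the single subalgebra $\mathfrak{M}$ can serve simultaneously on the $\varphi$-side (using its $\sigma^{\varphi}$-invariance and $\mathfrak{M}\subset\mathfrak{M}_{{\varphi}_a}$) and on the $\psi$-side (using its $\sigma^{\psi}$-invariance and $\mathfrak{M}\subset\mathfrak{M}_{{\psi}_b}$) --- which is precisely why both invariance hypotheses are imposed in the statement.
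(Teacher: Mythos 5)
Your proposal is correct and is exactly the paper's argument: the paper derives Theorem \ref{eq-4} as an immediate consequence of Theorem \ref{eq-3} (with $\mathfrak{M}_1=\mathfrak{M}_2=\mathfrak{M}$), which in turn is proved by applying Theorem \ref{ineq} twice with the roles of $(\varphi,a)$ and $(\psi,b)$ interchanged, just as you do.
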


\hfill $\square$
\smallskip

\begin{corollary}\label{PedTak-symm}
Let $M$ be a $W^*$-algebra and $\varphi\;\! ,\psi$ faithful, semi-finite, normal weights on $M$.
If there exists a $w$-dense, both $\sigma^{\varphi}$- and $\sigma^{\psi}$-invariant 
${}^*$-subalgebra

\noindent $\mathfrak{M}$ of $\mathfrak{M}_{{\varphi}_a}\cap\mathfrak{M}_{{\psi}_b}$
such that
\smallskip

\centerline{$\psi (x^*x)={\varphi}(x^*x)\;\! ,\qquad x\in\mathfrak{M}\;\! ,$}
\smallskip

\noindent then

\centerline{$\psi =\varphi\;\! .$}
\end{corollary}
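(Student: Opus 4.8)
The plan is to obtain Corollary \ref{PedTak-symm} as the special case $a=b=1_M$ of Theorem \ref{eq-4}. First I would record the trivial facts that $1_M\in (M^{\varphi})^+$ and $1_M\in (M^{\psi})^+$, since $\sigma^{\varphi}_t(1_M)=1_M=\sigma^{\psi}_t(1_M)$ for all $t\in\mathbb{R}$, and that, by the defining formula (\ref{derived}), $\varphi_{1_M}(x)=\varphi\big(1_M^{1/2}\,x\,1_M^{1/2}\big)=\varphi(x)$ for every $x\in M^+$, so $\varphi_{1_M}=\varphi$; likewise $\psi_{1_M}=\psi$. In particular $\mathfrak{M}_{\varphi_{1_M}}\cap\mathfrak{M}_{\psi_{1_M}}=\mathfrak{M}_{\varphi}\cap\mathfrak{M}_{\psi}$.

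With these identifications, the hypothesis of the corollary, namely the existence of a $w$-dense, both $\sigma^{\varphi}$- and $\sigma^{\psi}$-invariant $*$-subalgebra $\mathfrak{M}$ of $\mathfrak{M}_{\varphi_{1_M}}\cap\mathfrak{M}_{\psi_{1_M}}$ satisfying $\psi_{1_M}(x^*x)=\varphi_{1_M}(x^*x)$ for all $x\in\mathfrak{M}$, is precisely the hypothesis of Theorem \ref{eq-4} with $a=b=1_M$. Applying that theorem yields $\psi_{1_M}=\varphi_{1_M}$, that is, $\psi=\varphi$. There is no genuine obstacle here: the only point to verify is the bookkeeping identity $\varphi_{1_M}=\varphi$ and its analogue $\psi_{1_M}=\psi$, both immediate from (\ref{derived}); everything substantive is already contained in Theorem \ref{eq-4} (hence ultimately in the inequality criterion Theorem \ref{ineq}).
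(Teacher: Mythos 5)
Your proof is correct and is exactly the route the paper intends: the corollary is stated as an immediate consequence of Theorem \ref{eq-4} (via Theorem \ref{eq-3}), obtained by taking $a=b=1_M$ and noting $\varphi_{1_M}=\varphi$, $\psi_{1_M}=\psi$. (Incidentally, your reading silently corrects the typo in the corollary's statement, where $\mathfrak{M}_{\varphi_a}\cap\mathfrak{M}_{\psi_b}$ should read $\mathfrak{M}_{\varphi}\cap\mathfrak{M}_{\psi}$.)
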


\hfill $\square$
\smallskip

There exist also criteria of different kind for equality and inequalities between faithful,
semi-finite, normal weights, due to A. Connes. They are in terms of the {\it Connes cocycle}
(see \cite{Co1}, Section 1.2) or \cite{SZ1}, Theorem 10.28 and C.10.4) : if $\varphi$
and $\psi$ are faithful, semi-finite, normal weights on a $W^*$-algebra, the Connes
cosycle of $\psi$ with respect to $\varphi$ will be denoted by $(D\psi : D\varphi )_t\;\! ,
t\in \mathbb{R}\;\!$.

The next theorem frames up results from \cite{Co1} and \cite{Co2}. It will be used in the next
section.

\begin{theorem}\label{cocycle}
Let $M$ be a $W^*$-algebra, and $\varphi\;\! ,\psi$ faithful, semi-finite, normal weights on $M$.
\begin{itemize}
\item[(i)] $\psi =\varphi$ if and only if $(D\psi : D\varphi )_t=1_M$ for all $t\in\mathbb{R}\;\!$.
\item[(ii)] $\psi \leq\varphi$ if and only if $\mathbb{R}\ni t\longmapsto (D\psi : D\varphi )_t\in M$
allows a $w$-continuous

\noindent extension $\{\zeta\in\mathbb{C}\;\! ; -\frac 12\leq {\rm Im} \zeta\leq 1\}\ni \zeta\longmapsto
(D\psi : D\varphi )_{\zeta}\in M\;\!$, which is

\noindent analytic in the interior and satisfies $\| (D\psi : D\varphi )_{-\frac i2}\|\leq 1\;\!$.
\item[(iii)] $\psi (a)=\varphi (a)$ for all $a\in\mathfrak{F}_{\varphi}$ if and only if
$\mathbb{R}\ni t\longmapsto (D\psi : D\varphi )_t\in M$ has a $w$-continuous extension
$\{\zeta\in\mathbb{C}\;\! ; -\frac 12\leq {\rm Im} \zeta\leq 1\}\ni \zeta\longmapsto
(D\psi : D\varphi )_{\zeta}\in M\;\!$, which is analytic in the interior and such that
$(D\psi : D\varphi )_{-\frac i2}$ is isometric.
\end{itemize}
\end{theorem}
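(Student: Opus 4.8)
All three assertions essentially repackage Connes' results on the Radon--Nikodym cocycle (\cite{Co1}, \cite{Co2}), so the plan is simply to reduce each of them to the corresponding statement there. Part (i) is immediate in one direction, since $(D\varphi:D\varphi)_t=1_M$ for all $t$, and in the other direction it follows from Connes' Radon--Nikodym theorem (\cite{Co1}, Section~1.2): the assignment $\psi\mapsto\big((D\psi:D\varphi)_t\big)_{t\in\mathbb R}$ is a bijection from the faithful, semi-finite, normal weights on $M$ onto the $\sigma^{\varphi}$-cocycles with unitary values in $M$, so $(D\psi:D\varphi)_t\equiv1_M$ forces $\psi=\varphi$.

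For (ii) and (iii) I would pass to $\widetilde M=M\otimes M_2(\mathbb C)$ and to the faithful, semi-finite, normal weights $\theta,\theta'$ on $\widetilde M$ determined by $\theta(x)=\varphi(x_{11})+\psi(x_{22})$ and $\theta'(x)=\varphi(x_{11})+\varphi(x_{22})$ for a positive $x=(x_{ij})_{i,j=1,2}\in\widetilde M$. Since $\theta$ is the balanced weight of $\varphi$ and $\psi$, its modular automorphism group preserves the diagonal corners $e_{11}\widetilde M e_{11}$ and $e_{22}\widetilde M e_{22}$, and likewise for $\theta'=\varphi\otimes\mathrm{Tr}$; hence $e_{11},e_{22}\in\widetilde M^{\theta}\cap\widetilde M^{\theta'}$, the cocycle $(D\theta:D\theta')_t$ commutes with them, and restriction to the two corners --- on which $(\theta,\theta')$ restrict to $(\varphi,\varphi)$, resp. to $(\psi,\varphi)$ --- gives
\begin{equation*}
(D\theta:D\theta')_t=\begin{pmatrix}1_M&0\\0&(D\psi:D\varphi)_t\end{pmatrix}\;\! ,\qquad t\in\mathbb R\;\! .
\end{equation*}
An elementary check with positive $2\times2$ matrices over $M$ shows, moreover, that $\theta\leq\theta'$ on $\widetilde M^+$ if and only if $\psi\leq\varphi$ on $M^+$, and that $\theta=\theta'$ on $\mathfrak F_{\theta'}$ if and only if $\psi=\varphi$ on $\mathfrak F_{\varphi}$.

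It now remains to invoke, for the pair $(\theta,\theta')$, Connes' characterisations (\cite{Co1}, \cite{Co2}) of the domination $\theta\leq\theta'$ --- resp. of the coincidence of $\theta$ with $\theta'$ on the finite part --- by the bounded analytic continuability of $t\mapsto(D\theta:D\theta')_t$ to the strip displayed in (ii)--(iii), together with the bound, resp. the isometry, for its value at $-\frac i2$; reading these back through the block formula above, in which the constant $e_{11}$-block has norm $1$ and is automatically isometric and so imposes no restriction, one obtains exactly (ii) and (iii) for $(\psi,\varphi)$. I expect this appeal to Connes' analytic characterisation of domination to be the only genuine difficulty, the $2\times2$ reduction and the elementary equivalences above being mere bookkeeping: reproving it would require dealing with the fact that $e_{12}\in\widetilde M$ need not belong to $\mathfrak N_{\theta}$ when $\psi$ is not finite, so that the KMS-type analyticity of $t\mapsto\sigma^{\theta}_t(e_{12})$ --- whose off-diagonal entry carries $(D\psi:D\varphi)_t$ --- must be extracted by an approximation argument rather than directly from the modular theory of $\theta$. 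That is precisely what is done in \cite{Co1} and \cite{Co2}, where in fact (ii) and (iii) are established for an arbitrary pair of faithful, semi-finite, normal weights; the balanced-weight picture is included only because it makes transparent why the boundary value at $-\frac i2$ is the relevant quantity.
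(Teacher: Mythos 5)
Your proposal is correct and follows essentially the same route as the paper: the paper's proof is simply a citation of \cite{Co1}, Lemme 1.2.3 (b) for (i) and of \cite{Co2}, Th\'eor\`emes 3 and 4 for (ii) and (iii), which is exactly where you land after your (accurate) balanced-weight bookkeeping. The extra discussion of $(D\theta:D\theta')_t$ on $M\otimes M_2(\mathbb C)$ is sound but is background to Connes' proofs rather than new content.
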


\begin{proof}
(i) is consequence of \cite{Co1}, Lemme 1.2.3 (b) (see also \cite{St}, Corollary 3.6), (ii) is
\cite{Co2}, Th\'eor\`eme 3 (see also \cite{St}, Corollary 3.13), and (iii) is \cite{Co2}, Th\'eor\`eme 4
(see

\noindent also \cite{St}, Corollary 3.14).

\end{proof}
\smallskip

\section{Inequalities and equality between operator valued weights}

\cite{H1}, Proposition 2.3, \cite{H1}, Lemma 4.8 and \cite{H2},  Remark on page 360
indicate a way to reduce investigation of equality and inequalities between faithful,
semi-finite, normal operator valued weights to verification of equality and inequalities
between usual, scalar valued faithful, semi-finite, normal weight.
Actually, most part of the above results holds, with essentially the same
proof, also for not necessarily faithful weights and operator valued weights.
Let us single out those statements which will be used in the sequel :

\begin{proposition}\label{H-revisited}
Let $M$ be a $W^*$-algebra, $1_M\in N\subset M$ a $W^*$-subalgebra, and

\noindent $E\;\! ,E_1\;\! ,E_2 : M^+\longrightarrow \overline{N}\;\!^+$ semi-finite, normal
operator valued weights.
\begin{itemize}
\item [(i)] If $\varphi$ is a semi-finite, normal weight on $N$, then $\varphi\circ E$ is a
normal weight on $M$ such that $\mathfrak{M}_E\cap\mathfrak{M}_{\varphi\circ E}$
is a $w$-dense ${}^*$-subalgebra of $M$. In particular,

\noindent $\varphi\circ E$ is a semi-finite, normal weight on $M$. Assuming additionally
that

\noindent $E$ and $\varphi$ are faithful, also $\varphi\circ E$ is faithful.
\item [(ii)] If $\varphi\circ E_2\leq \varphi\circ E_1$ for every faithful, semi-finite, normal
weight $\varphi$ on $N$,

\noindent then $E_2\leq E_1\;\!$.
\end{itemize}
\end{proposition}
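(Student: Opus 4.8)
The plan is to treat (i) and (ii) separately; the genuine work in (i) is the density assertion, and the genuine difficulty in (ii) is that the hypothesis only feeds \emph{faithful} weights on $N$, whereas the order on $\overline N^+$ must be probed by arbitrary normal positive forms.

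\smallskip
For (i), that $\varphi\circ E$ is a normal weight on $M$ is immediate, since $E$ is additive, positively homogeneous and normal from $M^+$ into $\overline N^+$ and $\varphi$ is additive, positively homogeneous and lower semicontinuous on $\overline N^+$. For the density of $\mathfrak M_E\cap\mathfrak M_{\varphi\circ E}$ I would cut down as follows. As $E$ is semi-finite, $\mathfrak N_E$ is $w$-dense in $M$; moreover $\mathfrak N_E$ is a left ideal of $M$ (because $(c x)^*(c x)\le\|c\|^2 x^*x$ and $E$ is monotone) and a right $N$-module, with $E(b^* x b)=b^*E(x)b$ for $b\in N$. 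Since $\varphi$ is semi-finite, the hereditary cone $\{c\in N^+:\varphi(c)<+\infty\}$ admits an approximate unit $(u_\iota)_\iota$ with $0\le u_\iota\le 1_M$, $u_\iota\to 1_M$ in the $s^*$-topology and $\varphi(u_\iota)<+\infty$. For $x\in\mathfrak N_E$ one then has $x u_\iota\in\mathfrak N_E$ and
\[
E\big((x u_\iota)^*(x u_\iota)\big)=u_\iota\,E(x^*x)\,u_\iota\le\|E(x^*x)\|\,u_\iota ,
\]
whence $\varphi\big(E((x u_\iota)^*(x u_\iota))\big)\le\|E(x^*x)\|\,\varphi(u_\iota)<+\infty$, so $x u_\iota\in\mathfrak N_E\cap\mathfrak N_{\varphi\circ E}$ and $x u_\iota\to x$. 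Thus $\mathfrak N:=\mathfrak N_E\cap\mathfrak N_{\varphi\circ E}$ is a $w$-dense left ideal; the linear span $\mathfrak M_0$ of $\mathfrak N^*\mathfrak N$ is a $*$-subalgebra contained in $\mathfrak M_E\cap\mathfrak M_{\varphi\circ E}$, and $\mathfrak M_0$ is $w$-dense: choosing a bounded net $(w_\iota)_\iota$ in $\mathfrak N$ with $w_\iota\to 1_M$ in the $s^*$-topology (e.g.\ $v_k^{1/2}u_\iota$ with $(v_k)_k$ an approximate unit of $\mathfrak M_E\cap M^+$), one has $a=\lim_\iota w_\iota^* a w_\iota$ in the $s$-topology for every $a\in M$, with $w_\iota^* a w_\iota\in\mathfrak M_0$. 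In particular $\varphi\circ E$ is semi-finite, and if $E$ and $\varphi$ are faithful, $\varphi\big(E(x^*x)\big)=0$ forces $E(x^*x)=0$ (a faithful normal weight on $N$ extended to $\overline N^+$ vanishes only at $0$), hence $x=0$.

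\smallskip
For (ii), I would use the defining form of the order in $\overline N^+$: $E_2\le E_1$ means $\psi\big(E_2(a)\big)\le\psi\big(E_1(a)\big)$ for all $\psi\in N_*^+$ and $a\in M^+$. Fix $\psi\ne 0$ and $a$, and set $q:=s(\psi)\in N$. By $N$-bimodularity $E_i(q a q)=q\,E_i(a)\,q$, and, $\psi$ being supported by $q$, $\psi(q m q)=\psi(m)$ for all $m\in\overline N^+$; hence $\psi\big(E_i(a)\big)=\psi\big(E_i(q a q)\big)$. Now $\psi_q:=\|\psi\|^{-1}\psi|_{qNq}$ is a faithful normal state on $qNq$, so, picking a faithful, semi-finite, normal weight $\varphi_0$ on $(1_M-q)N(1_M-q)$, the formula $\varphi(x):=\psi_q(q x q)+\varphi_0\big((1_M-q)x(1_M-q)\big)$ defines a faithful, semi-finite, normal weight on $N$ (faithfulness is immediate; semi-finiteness is checked as in (i), noting $\mathfrak N_\varphi\supseteq N q$ together with the finite part of $\varphi_0$). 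Applying the hypothesis to $\varphi$ and evaluating at $q a q\in M^+$, and using that $E_i(q a q)=q E_i(a)q$ is supported by $q$, so $\varphi\big(E_i(q a q)\big)=\psi_q\big(E_i(q a q)\big)=\|\psi\|^{-1}\psi\big(E_i(a)\big)$, one obtains $\psi\big(E_2(a)\big)\le\psi\big(E_1(a)\big)$, as required.

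\smallskip
The step I expect to be the main obstacle is exactly this passage from faithful weights to arbitrary forms in (ii). The naive remedy, replacing $\psi$ by the faithful weight $\psi+\varepsilon\varphi_0$, collapses precisely in the genuinely occurring case $\varphi_0\big(E_1(a)\big)=+\infty$ (where the inequality it produces is vacuous), and no faithful weight on all of $N$ can avoid this when $E_1(a)$ carries a nonzero ``infinite part'' — the same sort of mismatch flagged in the introduction concerning $\mathfrak M_{E_1}$ and $\mathfrak M_{\theta\circ E_1}$. The localization to the corner $qNq$, where $\psi$ becomes faithful and $qNq$ is automatically countably decomposable, combined with the $N$-bimodularity of $E_1,E_2$ to transport everything back, is what circumvents it. In (i) the only delicate point is the density of $\mathfrak M_E\cap\mathfrak M_{\varphi\circ E}$, for which the left-ideal/right-$N$-module structure of $\mathfrak N_E$ and an approximate unit for $\varphi$ do all the work.
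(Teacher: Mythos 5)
Your proposal is correct and follows essentially the same route as the paper: in (i) you cut $\mathfrak{N}_E$ down by right multiplication with an approximate unit for $\varphi$ in $N$ (the paper conjugates $\mathfrak{M}_E\cap M^+$ by a bounded net in $\mathfrak{M}_\varphi$, which is the same device), and in (ii) you localize an arbitrary $\psi\in N_*^+$ to its support $q=s(\psi)$ and extend to a faithful, semi-finite, normal weight by adding a weight carried by $1_M-q$, exactly as the paper does with $\varphi:=\psi+\theta$, $s(\theta)=1_M-s(\psi)$. No gaps worth noting.
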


\begin{proof}
For (i) we have essentially to repeat the proof of \cite{H1}, Proposition 2.3 :

Since $\varphi$ is semi-finite, there exists a bounded net
$(x_{\iota})_{\iota}$ in $\mathfrak{M}_{\varphi}$ such that $x_{\iota}  \xrightarrow{\;\; \iota\;\,}1_M$
in the $s^*$-topology.
For every $a\in\mathfrak{M}_{E}\cap M^+$ and every $\iota\;\!$, we have
\begin{equation*}
E(x_{\iota}^*a\;\! x_{\iota})=x_{\iota}^*E(a)\;\! x_{\iota}\in N\;\! ,
\end{equation*}
so $x_{\iota}^*a\;\! x_{\iota}\in \mathfrak{M}_{E}\cap M^+$. On the other hand,
\begin{equation*}
(\varphi\circ E)(x_{\iota}^*a\;\! x_{\iota})=\varphi\big( x_{\iota}^*E(a)\;\! x_{\iota}\big)
\leq \| E(a)\| \varphi (x_{\iota}^*x_{\iota})<+\infty\;\! ,
\end{equation*}
yields $x_{\iota}^*a\;\! x_{\iota}\in \mathfrak{M}_{\varphi\circ E}\cap M^+\!$, hence
$x_{\iota}^*a\;\! x_{\iota}\in  \mathfrak{M}_{E}\cap \mathfrak{M}_{\varphi\circ E}\;\!$.
Since $x_{\iota}^*a\;\! x_{\iota}  \xrightarrow{\;\; \iota\;\,} a$ in the $s^*$-topology,
it follows that $a$ belongs to the $s^*$-closure of $ \mathfrak{M}_{E}\cap
\mathfrak{M}_{\varphi\circ E}\;\!$.

On the other hand, the faithfulness of $E$ and $\varphi$ implies the faithfulness
of $\varphi\circ E$ trivially.
\smallskip

For the proof of (ii) we use the argumentation of the proof of \cite{H1}, Lemma 4.8 :

Let $\psi$ be any normal positive form on $N$. Choosing some semi-finite, normal
weight $\theta$ on $N$ with support $s(\theta )=1_M-s(\psi )\;\!$, $\varphi :=\psi +\theta$
will be a faithful. semi-finite, normal weight on $N$ such that $\psi (b)=\varphi \big( s(\psi )\;\!
b\;\! s(\psi )\big)$ for all $b\in N^+$,

\noindent and consequently, for all $b\in \overline{N}\;\!^+$.
By the assumption on $E_1$ and $E_2\;\!$, we have

\noindent $\varphi\circ E_2\leq \varphi\circ E_1$
and it follows for each $a\in M^+$ :
\begin{equation*}
\begin{split}
\psi\big( E_2(a)\big) =\;&\varphi\big( s(\psi )\;\! E_2(a)\;\! s(\psi )\big) =(\varphi\circ E_2)\big( s(\psi )\;\!
a\;\! s(\psi )\big) \\
\leq\;&(\varphi\circ E_1)\big( s(\psi )\;\!a\;\! s(\psi )\big) =\varphi\big( s(\psi )\;\! E_1(a)\;\! s(\psi )\big)
=\psi\big( E_1(a)\big)\;\! .
\end{split}
\end{equation*}

In conclusion, for every $a\in M^+$ holds
\smallskip

\centerline{$\psi\big( E_2(a)\big)\leq \psi\big( E_1(a)\big)$ for all normal positive forms $\psi$ on $N ,$}
\smallskip

\noindent what means the inequality $E_2(a)\leq E_1(a)$ in $\overline{N}\;\!^+$.
In other words, $E_2\leq E_1\;\!$.

\end{proof}

For  two weights $\varphi\;\! ,\psi$ on a $W^*$-algebra $M$ holds trivially :
If $\mathfrak{M}_{\varphi}\subset\mathfrak{M}_{\psi}$ and the restrictions of $\varphi$
and $\psi$ to $\mathfrak{M}_{\varphi}$ coincide, then $\psi\leq \varphi\;\! .\!$
Indeed, at any element of $M^+$ which does not belong to $\mathfrak{M}_{\varphi}\;\!$,
$\varphi$ takes the value $+\infty$ which majorizes any extended real valiue.

Here we will prove that the analogous statement holds true also for semi-finite, normal
operator valued weights on $W^*$-algebras. The proof will use modular theory, it is
not such trivial as in the case of scalar weights.

Let us first examine the situation $E_2 \restriction (\mathfrak{M}_{E_1}\cap M^+) =
E_1 \restriction (\mathfrak{M}_{E_1}\cap M^+)$ for semi-finite, normal operator valued
weights $E_1\;\! ,E_2 : M\longrightarrow \overline{N}\;\!^+$, $M$ a $W^*$-algebra and
$1_M\in N\subset M$ a $W^*$-subalgebra, in the case of faithful $E_1\;\!$:

\begin{lemma}\label{faithful-ineq}
Let $M$ be a $W^*$-algebra, and $1_M\in N\subset M$ a $W^*$-subalgebra. If

\noindent $E_1\;\! ,E_2 : M^+\longrightarrow \overline{N}\;\!^+$ are semi-finite,
normal operator valued weights, $E_1$ faithful, such that
\smallskip

\centerline{$\mathfrak{M}_{E_1}\subset\mathfrak{M}_{E_2}\text{ and }\;\! E_2(a)=E_1(a)\text{ for
all }a\in \mathfrak{M}_{E_1}\cap M^+ ,$}
\medskip

\noindent then $E_2\leq E_1\;\! .\!$
Moreover, for each faithful, semi-finite, normal weight $\varphi$ in $N$,
\medskip

\centerline{$\mathfrak{M}_{\varphi\circ E_1}\subset \mathfrak{M}_{\varphi\circ E_2}\text{ and }\;\!
(\varphi\circ E_2)(a)=(\varphi\circ E_1)(a)\text{ for all }a\in \mathfrak{M}_{\varphi\circ E_1}\cap M^+ .$}
\end{lemma}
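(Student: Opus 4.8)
The plan is to reduce everything to the scalar criterion of Theorem~\ref{ineq} together with Proposition~\ref{H-revisited}. Fix a faithful, semi-finite, normal weight $\varphi$ on $N$ and put $\chi:=\varphi\circ E_1$, $\psi:=\varphi\circ E_2$; by Proposition~\ref{H-revisited}(i), $\chi$ is a faithful, semi-finite, normal weight, $\psi$ is a semi-finite, normal weight, and $\mathfrak M:=\mathfrak M_{E_1}\cap\mathfrak M_{\chi}$ is a $w$-dense ${}^*$-subalgebra of $M$, clearly contained in $\mathfrak M_{\chi}$. Moreover $\mathfrak M$ is $\sigma^{\chi}$-invariant, since $\sigma^{\chi}_t$ preserves $\mathfrak M_{\chi}$ and also preserves $\mathfrak M_{E_1}$, because $E_1\circ\sigma^{\varphi\circ E_1}_t=\sigma^{\varphi}_t\circ E_1$ (\cite{H1}, Proposition~4.9). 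Finally, for $x\in\mathfrak M$ one has $x^*x\in\mathfrak M_{E_1}\cap M^+$, so the hypothesis $E_2=E_1$ on $\mathfrak M_{E_1}\cap M^+$ gives $E_2(x^*x)=E_1(x^*x)$, whence $\psi(x^*x)=\chi(x^*x)$. Thus Theorem~\ref{ineq}, applied with $\chi$ in the role of $\varphi$, with $a=1_M$ (which lies in $(M^{\chi})^+$, and gives $\chi_a=\chi$) and with the normal weight $\psi$, yields $\psi\le\chi$, that is $\varphi\circ E_2\le\varphi\circ E_1$.

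Since this holds for every faithful, semi-finite, normal weight $\varphi$ on $N$, Proposition~\ref{H-revisited}(ii) gives $E_2\le E_1$. Consequently, for $a\in\mathfrak M_{\varphi\circ E_1}\cap M^+$ one has $\varphi\big(E_2(a)\big)\le\varphi\big(E_1(a)\big)<+\infty$, and passing to linear spans, $\mathfrak M_{\varphi\circ E_1}\subset\mathfrak M_{\varphi\circ E_2}$.

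It remains to establish $(\varphi\circ E_2)(a)=(\varphi\circ E_1)(a)$ for $a\in\mathfrak M_{\varphi\circ E_1}\cap M^+$, and this is the substantive point: as stressed in the Introduction, $\mathfrak M_{E_1}$ and $\mathfrak M_{\varphi\circ E_1}$ bear no generic relation, so the hypothesis cannot be invoked directly at $a$. Fix such an $a$. Since $\varphi\big(E_1(a)\big)<+\infty$ and $\varphi$ is faithful, the ``infinite part'' of $E_1(a)\in\overline{N}\;\!^+$ vanishes, so the spectral projections $e_n\in N$ of $E_1(a)$ for the intervals $[0,n]$ satisfy $E_1(a)e_n\nearrow E_1(a)$, whence $\varphi\big(E_1(a)e_n\big)\nearrow\varphi\big(E_1(a)\big)$ by normality of $\varphi$. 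Because $e_n\in N$ and operator valued weights are $N$-bimodular, $E_k(e_nae_n)=e_nE_k(a)e_n$ for $k=1,2$; as $e_nE_1(a)e_n=E_1(a)e_n$ is bounded, $e_nae_n\in\mathfrak M_{E_1}\cap M^+$, and the hypothesis gives
\begin{equation*}
e_nE_2(a)e_n=e_nE_1(a)e_n=E_1(a)e_n\;\!,\qquad n\ge 1\;\!.
\end{equation*}
Using $e_k\le e_n$ for $k\le n$ this forces $(e_n-e_k)E_2(a)(e_n-e_k)=E_1(a)(e_n-e_k)$. Hence, for every vector form $\omega_{\xi}$ on $N$ occurring in a decomposition of $\varphi$ into normal positive (vector) forms --- for which $\omega_{\xi}\big(E_1(a)\big)<+\infty$, as $\sum_{\xi}\omega_{\xi}\big(E_1(a)\big)=\varphi\big(E_1(a)\big)<+\infty$ --- the bounded operators $E_2(a)^{1/2}e_n$ are defined (because $e_nE_2(a)e_n=E_1(a)e_n$ is bounded) and the vectors $E_2(a)^{1/2}e_n\xi$ form a Cauchy sequence, since $\big\|E_2(a)^{1/2}(e_n-e_k)\xi\big\|^2=\big(E_1(a)(e_n-e_k)\xi\,\big|\,\xi\big)\to 0$; by closedness of $E_2(a)^{1/2}$ they converge to $E_2(a)^{1/2}\xi$, so $\omega_{\xi}\big(e_nE_2(a)e_n\big)\to\omega_{\xi}\big(E_2(a)\big)$, with the domination $\omega_{\xi}\big(e_nE_2(a)e_n\big)\le\omega_{\xi}\big(E_1(a)\big)$ valid for all $n$. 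Summing over the forms and applying dominated convergence, $\varphi\big(e_nE_2(a)e_n\big)\to\varphi\big(E_2(a)\big)$. On the other hand $\varphi\big(e_nE_2(a)e_n\big)=\varphi\big(E_1(a)e_n\big)\to\varphi\big(E_1(a)\big)$, and therefore $(\varphi\circ E_2)(a)=\varphi\big(E_2(a)\big)=\varphi\big(E_1(a)\big)=(\varphi\circ E_1)(a)$.

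The genuine obstacle is the last step: the two-sided truncations $e_n(\,\cdot\,)e_n$ do not respect the order on $E_2(a)$, so no monotone-convergence argument is available on the $E_2$-side, and one is forced to extract the Cauchy property by hand from the identity $e_nE_2(a)e_n=E_1(a)e_n$ and then transport it through $\varphi$ by means of its decomposition into forms. By contrast, the verifications in the first two paragraphs --- the $w$-density and $\sigma^{\chi}$-invariance of $\mathfrak M$, and the applicability of Theorem~\ref{ineq} and of Proposition~\ref{H-revisited} --- are routine.
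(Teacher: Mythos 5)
Your proof is correct and follows essentially the same route as the paper: the inequality is reduced, exactly as in the paper's proof, to Theorem \ref{ineq} applied to $\mathfrak{M}_{E_1}\cap\mathfrak{M}_{\varphi\circ E_1}$ together with Proposition \ref{H-revisited}, and the equality on $\mathfrak{M}_{\varphi\circ E_1}\cap M^+$ is likewise obtained by realizing $E_1(a)\geq E_2(a)$ as positive self-adjoint operators affiliated with $N$ (via \cite{H1}, Lemma 1.4), truncating by bounded spectral projections of $E_1(a)$ so as to land in $\mathfrak{M}_{E_1}\cap M^+$, and decomposing $\varphi$ into positive vector forms. The only divergence is the final limiting step --- the paper constructs a contraction $U$ with $U E_1(a)^{1/2}\subset E_2(a)^{1/2}$ and shows it is isometric on the range of $s\big( E_1(a)\big)$, whereas you obtain the same pointwise identity $\omega_{\xi}\big( E_2(a)\big) =\omega_{\xi}\big( E_1(a)\big)$ by a Cauchy-sequence and closedness argument followed by dominated convergence over the sum of vector forms --- and both versions are sound.
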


\begin{proof}
Let $\varphi$ be an arbitrary faithful, semi-finite, normal weight on $N$.

By Proposition \ref{H-revisited} (i) and by the assumptions of the lemma, $\varphi\circ E_1$ and $\varphi\circ E_2$ are semi-finite, normal
weights on $M$, $\varphi\circ E_1$ is faithful, and $\mathfrak{M}:=\mathfrak{M}_{E_1}\cap\mathfrak{M}_{\varphi\circ E_1}$ is a $w$-dense ${}^*$-subalgebra of $\mathfrak{M}_{\varphi\circ E_1}$ satisfying
\begin{equation*}
(\varphi\circ E_2) (x^*x)=(\varphi\circ E_1) (x^*x)\;\! ,\qquad x\in\mathfrak{M}\;\! .
\end{equation*}
We can apply Theorem \ref{ineq} to deduce the inequality $\varphi\circ E_2\leq\varphi\circ E_1$
once we verify the $\sigma^{\varphi\circ E_1}$-invariance of $\mathfrak{M}\;\!$. But this is
consequence of the $\sigma^{\varphi\circ E_1}$-invariance of both  $\mathfrak{M}_{E_1}\!$
(see \cite{H1}, Proposition 4.9) and $\mathfrak{M}_{\varphi\circ E_1}\;\!$.

Now, having $\varphi\circ E_2\leq\varphi\circ E_1$ for any faithful, semi-finite, normal weight $\varphi$
on $N$, the inequality $E_1\leq E_2$ follows by applying Proposition \ref{H-revisited} (ii).
\smallskip

For the second statement of the lemma, let the faithful, semi-finite, normal weight $\varphi$ on
$N$ and $a\in M^+$ verifying $\varphi\big( E_1(a)\big) <+\infty$ be arbitrary. We have to show
that then
\begin{equation}\label{scalar-eq}
\varphi\big( E_2(a)\big) =\varphi\big( E_1(a)\big) .
\end{equation}

Let us consider $M$ in a spatial representation $M\subset B(H)\;\!$.

Since each normal positive form on a von Neumann algebra is a countable sum of positive vector
functionals (see e.g. \cite{Dix}, Chap. I,  \S 4, Th\'eor\`eme 1 or \cite{SZ1}, E.7.8)

\noindent and each normal
weight on a von Neumann algebra is a sum of normal positive forms (see \cite{PT}, Theorem 7.2),
$\varphi$ is a sum of positive vector functionals on $N$.

Since $E_2(a)\leq E_1(a)$ belong to $\overline{N}\;\!^+\!$ and
$\varphi\big( E_2(a)\big) \leq\varphi\big( E_1(a)\big) <+\infty$ with $\varphi$

\noindent faithful, by \cite{H1}, Lemma 1.4 they correspond to positive self-adjoint
operators $A_1$

\noindent and $A_2$ on $H$, affiliated with $N$ and such that, for every $\xi\in H$,
\medskip

\centerline{$\displaystyle
\omega_{\xi}\big( E_j(a) \big) =
\begin{cases}
\; \| A_j^{1/2}\xi\|^2\!\! &\text{for $\xi\in \mathcal{D}(A_j^{1/2})$}\\
\;\;\,\quad \infty &\text{otherwise}
\end{cases}\;\! ,\qquad j=1\;\! ,2\;\! ,$}
\smallskip

\noindent where $\mathcal{D}(A_j^{1/2})$ stands for the domain of $A_j^{1/2}$.
Having $\omega_{\xi}\big( E_2(a) \big)\leq \omega_{\xi}\big( E_1(a) \big)$ for
all $\xi\in H$, it follows
\begin{equation*}
\mathcal{D}(A_1^{1/2})\subset \mathcal{D}(A_2^{1/2})\text{ and }\| A_2^{1/2}\xi\|
\leq \| A_1^{1/2}\xi\|\text{ for all }\xi\in \mathcal{D}(A_1^{1/2})\;\! .
\end{equation*}
Consequently the formulas
\medskip

\centerline{$U\big( A_1^{1/2}\xi\big) := A_2^{1/2}\xi$ for $\xi\in \mathcal{D}(A_1^{1/2})\;\! ,\quad
U\eta :=0$ for $\eta\in \big( 1_H-s(A_1)\big) H$}
\smallskip

\noindent define a linear contraction $U : H\longrightarrow H$ for which $U A_1^{1/2}
\subset A_2^{1/2}$.

Let us consider, for any $n\geq 1\;\!$, the spectral projection
$f_n:=\chi_{\substack{ {} \\ {[1/n,n]}}}(A_1)\in N$ of

\noindent $A_1\;\!$. We notice that $f_n\nearrow s(A_1)\;\!$.

The boundedness of the operator
$A_1f_n$ yields $E_1(f_n a\;\! f_n)=
f_n E_1(a)\;\! f_n\in N\;\!$, hence $f_n a\;\! f_n\in \mathfrak{M}_{E_1}\cap M^+\subset
\mathfrak{M}_{E_2}\cap M^+$ and $E_2(f_n a\;\! f_n)=E_1(f_n a\;\! f_n)\;\!$.
For every $\xi\in H$ we deduce
\begin{equation*}
\begin{split}
\| A_1^{1/2} f_n\xi\|^2=\;&\omega_{f_n\xi} \big( E_1(a)\big)\! =\omega_{\xi}\big( f_n E_1(a)\;\! f_n\big)\!
=\omega_{\xi}\big( E_1(f_n a\;\! f_n)\big) \\
=\;&\omega_{\xi}\big( E_2(f_n a\;\! f_n)\big)\! =\omega_{\xi}\big( f_n E_2(a)\;\! f_n\big)\!
=\omega_{f_n\xi}\big( E_2(a)\big)\! \\
=\;&\| A_2^{1/2} f_n\xi\|^2 =\| U A_1^{1/2} f_n\xi\|^2\;\! .
\end{split}
\end{equation*}
Since $ A_1^{1/2} f_nH=f_nH$, it follows that $U$ acts isometrically on $f_nH$.

Taking into account that $\bigcup\limits_{n\geq 1}f_nH$ is dense in $s(A_1)H$, we conclude
that $U$ acts isometrically on $s(A_1)H$, hence $U$ is a partial isometry with
initial projection $s(A_1)\;\!$. Therefore, for every $\xi\in \mathcal{D}(A_1^{1/2})\;\!$,
\begin{equation}\label{eq.vect.funct}
\omega_{\xi}\big( E_1(a)\big) =\| A_1^{1/2}\xi\|^2=\| UA_1^{1/2}\xi\|^2=\| A_2^{1/2}\xi\|^2
=\omega_{\xi}\big( E_2(a)\big)\;\! .
\end{equation}

Let $(\xi_{\iota})_{\iota}$ be a family of vectors in $H$ such that $\varphi$ is equal to the
sum $\sum\limits_{\iota}\omega_{\xi_{\iota}}\;\!$.

\noindent Since $\sum\limits_{\iota} \omega_{\xi_{\iota}}\big( E_1(a)\big) =\varphi
\big( E_1(a)\big) <+\infty\;\!$, each $\xi_{\iota}$ should belong to $\mathcal{D}(A_1^{1/2})\;\!$.
Indeed, $\xi_{\iota}\notin \mathcal{D}(A_1^{1/2})$ for some $\iota$ would imply $+\infty =
\omega_{\xi_{\iota}}\big( E_1(a)\big)\leq \varphi \big( E_1(a)\big)$.
Thus (\ref{eq.vect.funct})

\noindent holds for each $\xi_{\iota}$ and (\ref{scalar-eq}) folows :
\begin{equation*}
\varphi \big( E_1(a)\big) =\sum\limits_{\iota} \omega_{\xi_{\iota}}\big( E_1(a)\big)
=\sum\limits_{\iota} \omega_{\xi_{\iota}}\big( E_2(a)\big) =\varphi \big( E_2(a)\big) .
\end{equation*}

\end{proof}

The next lemma is an inverse to Lemma \ref{faithful-ineq} :

\begin{lemma}\label{inv-faithful-ineq}
Let $M$ be a $W^*$-algebra, and $1_M\in N\subset M$ a $W^*$-subalgebra. If

\noindent $E_1\;\! ,E_2 : M^+\longrightarrow \overline{N}\;\!^+$ are semi-finite,
normal operator valued weights, $E_1$ faithful, such that, for each faithful, semi-finite,
normal weight $\varphi$ in $N$,
\medskip

\centerline{$\mathfrak{M}_{\varphi\circ E_1}\subset \mathfrak{M}_{\varphi\circ E_2}\text{ and }\;\!
(\varphi\circ E_2)(a)=(\varphi\circ E_1)(a)\text{ for all }a\in \mathfrak{M}_{\varphi\circ E_1}\cap M^+ ,$}
\smallskip

\noindent then

\centerline{$\mathfrak{M}_{E_1}\subset \mathfrak{M}_{E_2}\text{ and }\;\! E_2(a)=E_1(a)\text{ for
all }a\in \mathfrak{M}_{E_1}\cap M^+ .$}
\end{lemma}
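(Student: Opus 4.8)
The plan is to deduce the lemma from the scalar-weight machinery of Section 3 together with Proposition \ref{H-revisited}(ii), after localizing $N$ by countably decomposable projections so that the hypothesis becomes applicable.

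First I would extract an a priori inequality $E_2\leq E_1$. Fix an arbitrary faithful, semi-finite, normal weight $\varphi$ on $N$. For $x\in M^+$ not lying in $\mathfrak{M}_{\varphi\circ E_1}$ we have $(\varphi\circ E_1)(x)=+\infty\geq(\varphi\circ E_2)(x)$, while for $x\in\mathfrak{M}_{\varphi\circ E_1}\cap M^+$ the hypothesis gives $(\varphi\circ E_2)(x)=(\varphi\circ E_1)(x)$; hence $\varphi\circ E_2\leq\varphi\circ E_1$ (this is the trivial comparison for scalar weights recalled just before Lemma \ref{faithful-ineq}). As $\varphi$ was arbitrary, Proposition \ref{H-revisited}(ii) yields $E_2\leq E_1$. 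In particular, for $a\in\mathfrak{M}_{E_1}\cap M^+$ we get $E_2(a)\leq E_1(a)$ in $\overline{N}^+$ with $E_1(a)\in N^+$; since an element of $\overline{N}^+$ majorized by a bounded one is itself bounded, $E_2(a)\in N^+$, i.e. $a\in\mathfrak{M}_{E_2}\cap M^+$. Applying this with $a=x^*x$, $x\in\mathfrak{N}_{E_1}$, shows $\mathfrak{N}_{E_1}\subset\mathfrak{N}_{E_2}$ and hence $\mathfrak{M}_{E_1}\subset\mathfrak{M}_{E_2}$.

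It then remains to prove $E_2(a)=E_1(a)$ for each $a\in\mathfrak{M}_{E_1}\cap M^+$; writing $c:=E_1(a)-E_2(a)\in N^+$, it suffices to show $e\,c\,e=0$ for every countably decomposable projection $e\in N$ (since any nonzero projection in $N$ dominates a nonzero cyclic, hence countably decomposable, subprojection, and $e\,c\,e=(c^{1/2}e)^*(c^{1/2}e)$, this forces $c=0$). The hard part is precisely this reduction: one cannot feed $a$ directly into the hypothesis because there is no generic relation between $\mathfrak{M}_{E_1}$ and $\mathfrak{M}_{\varphi\circ E_1}$, so a faithful weight $\varphi$ on $N$ need not be finite on $E_1(a)$ — one must first cut down by $e$. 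Fixing such an $e$: because $eNe$ is countably decomposable it carries a faithful normal state $\omega$, and, picking a faithful, semi-finite, normal weight $\theta$ on $(1_M-e)N(1_M-e)$, the formula $\varphi(x):=\omega(e\,x\,e)+\theta\big((1_M-e)\,x\,(1_M-e)\big)$ defines a faithful, semi-finite, normal weight on $N$ with $\varphi(e)<+\infty$. By the $N$-bimodularity of operator valued weights, $E_j(e\,a\,e)=e\,E_j(a)\,e$ for $j=1,2$, so $(\varphi\circ E_1)(e\,a\,e)=\varphi\big(e\,E_1(a)\,e\big)\leq\|E_1(a)\|\,\varphi(e)<+\infty$, i.e. $e\,a\,e\in\mathfrak{M}_{\varphi\circ E_1}\cap M^+$. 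The hypothesis now gives $\varphi\big(e\,E_2(a)\,e\big)=(\varphi\circ E_2)(e\,a\,e)=(\varphi\circ E_1)(e\,a\,e)=\varphi\big(e\,E_1(a)\,e\big)<+\infty$ with $e\,E_2(a)\,e\leq e\,E_1(a)\,e$ bounded, whence $\varphi(e\,c\,e)=0$; faithfulness of $\varphi$ gives $e\,c\,e=0$. Thus $c=0$ and $E_2(a)=E_1(a)$.

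Everything apart from the localization step is bookkeeping: the inequality $E_2\leq E_1$, the bimodularity identity $E_j(e\,a\,e)=e\,E_j(a)\,e$, and the faithfulness of the cut-down weight $\varphi$. The essential difficulty — and the reason the author flags it in the introduction — is the missing inclusion between $\mathfrak{M}_{E_1}$ and $\mathfrak{M}_{\varphi\circ E_1}$, which forces the passage through countably decomposable corners of $N$ and the choice of a faithful, semi-finite, normal weight on $N$ that is finite on a prescribed countably decomposable projection.
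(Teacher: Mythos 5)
Your proof is correct and follows essentially the same strategy as the paper: derive $E_2\leq E_1$ from Proposition \ref{H-revisited}(ii), then localize to countably decomposable projections $e$ of $N$ and build a faithful, semi-finite, normal weight $\varphi=\omega(e\cdot e)+\theta\big((1_M-e)\cdot(1_M-e)\big)$ that is finite on $e$, so that $e\,a\,e\in\mathfrak{M}_{\varphi\circ E_1}$ and the hypothesis applies. The only difference is organizational: the paper assembles (via Zorn) a maximal orthogonal family of countably decomposable projections with finite partial sums $f_F\nearrow 1_M$ and tests against normal forms supported on $f_F$, whereas you test each countably decomposable $e$ separately and conclude $c=0$ from $e\,c\,e=0$ for all such $e$ --- a slightly leaner reduction that buys the same conclusion.
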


\begin{proof}
First of all, by Proposition \ref{H-revisited} (ii) we have $E_2\leq E_1\;\!$, so
$\mathfrak{M}_{E_1}\subset \mathfrak{M}_{E_2}\;\!$.

Now let $a\in \mathfrak{M}_{E_1}\cap M^+$ be arbitrary. Then $E_2(a)\leq E_1(a)\in N$,
in particular also $E_2(a)\in N$. All we have now to show is the equality $E_2(a)=E_1(a)\;\!$.
\smallskip

Since $E_1(\mathfrak{M}_{E_1})$ is a $w$-dense two-sided ideal in $N$ (see \cite{H1},
Proposition 2.5), there exists a family $(f_{\iota})_{\iota\in I}$ of non-zero, countably decomposable
projections in $N$, belonging to $E_1(\mathfrak{M}_{E_1})\;\!$, such that
$\displaystyle \sum\limits_{\iota\in I} f_{\iota} =1_M\;\!$.

For let us consider, by the Zorn lemma, a maximal family $(f_{\iota})_{\iota\in I}$ of mutually

\noindent orthogonal, non-zero, countably decomposable projections in $N$, each one belonging
to $E_1(\mathfrak{M}_{E_1})\;\!$.

Assume that $\displaystyle f_0:=1_M-\sum\limits_{\iota\in I} f_{\iota}\neq 0\;\!$. 
Since $E_1(\mathfrak{M}_{E_1})$ is $w$-dense, there is some

\noindent $y\in E_1(\mathfrak{M}_{E_j})$ with $y f_0\neq 0\;\!$. Further, since
$E_1(\mathfrak{M}_{E_1})$ is a two-sided ideal in $N$,

\noindent $f_0y^*y f_0\neq 0$ belongs to $E_1(\mathfrak{M}_{E_1})$ and, choosing some
$0<\lambda <\| f_0y^*y f_0\|\;\!$, we have
$f_1:=\chi_{\substack{ {} \\ [\lambda\;\! ,+\infty )}}(f_0y^*y f_0)\neq 0\;\!$. Choose some
non-zero, countably decomposable

\noindent projection $f_2\leq f_1$ in $N$. Then $\displaystyle f_2\leq f_1\leq
\frac 1{\;\!\lambda\;\!}\;\! f_0y^*y f_0\in E_1(\mathfrak{M}_{E_1})\;\!$. Consequently

\noindent $0\neq f_2\leq f_0$ and, since $E_1(\mathfrak{M}_{E_1})$ is a two-sided ideal in $N$,
$f_2\in E_1(\mathfrak{M}_{E_1})\;\!$. But this contradicts the maximality of the family
$(f_{\iota})_{\iota\in I}\;\!$.
Consequently $f_0$ should vanish, that is $\displaystyle \sum\limits_{\iota\in I} f_{\iota} =1_M\;\!$.

Set $f_F:=\sum\limits_{\iota\in F} f_{\iota} $ for any finite subset $F\subset I$.
$f_F$ is then a countably decomposable projection belonging to $E_1(\mathfrak{M}_{E_1})\;\!$.
Since $f_F\nearrow 1_M\;\!$, for the equality  $E_2(a)=E_1(a)$ it is enough to show that
$f_F E_2(a)f_F =f_F E_1(a)f_F$ for every finite $F\subset I$.
\smallskip

For let $F$ be any finite subset of $I$. $f_F E_2(a)f_F =f_F E_1(a)f_F$ will follow if we

\noindent show that $\psi\big( f_F E_2(a)f_F\big) =\psi\big( f_F E_1(a)f_F\big)$ for every
normal positive form $\psi$ on $N$ with $s(\psi )=f_F\;\!$.

Choose for each $\iota\in I\setminus F$ a normal positive form $\psi_{\iota}$ on $N$ of
support $s(\psi_{\iota})=f_{\iota}$ and define the faithful, semi-finite, normal weight $\varphi$
on $N$ by taking $\varphi :=\psi +\sum\limits_{\iota\in I\setminus F}\psi_{\iota}$ on $N^+$.
Since
\smallskip

\centerline{$(\varphi\circ E_1) (f_F a f_F) =\varphi\big( f_F E_1(a) f_F\big) \leq \| E_1(a)\|\;\!
\varphi (f_F)=\| E_1(a)\|\;\! \psi (f_F)<+\infty\;\! ,$}
\smallskip

\noindent  $f_F a f_F$ belongs to $\mathfrak{M}_{E_1}\cap M^+$ and
by the assumption on $E_1$ and $E_2$ we deduce :
\begin{equation*}
\begin{split}
\psi\big( f_F E_2(a)f_F\big) =\;&\varphi \big( f_F E_2(a)f_F\big) =(\varphi\circ E_2) (f_F a f_F) \\
=\;&(\varphi\circ E_1) (f_F a f_F)=\varphi \big( f_F E_1(a)f_F\big) =\psi\big( f_F E_1(a)f_F\big)\;\! .
\end{split}
\end{equation*}

\end{proof}

The case of general semi-finite, normal operator valued weights will be reduced to
the above case using the next proposition :

\begin{proposition}\label{reduce}
Let $M$ be a $W^*$-algebra, $1_M\in N\subset M$ a $W^*$-subalgebra, and

\noindent $E_0 : M^+\longrightarrow \overline{N}\;\!^+$ a semi-finite, normal
operator valued weight. Let $p_0$ denote the

\noindent central support of $s(E_0)\in N'\cap M$ in $N$, and $\pi_{\substack{ {} \\ 0}}$ the
${}^*$-isomorphism
\smallskip

\centerline{$N p_0\ni y\longmapsto y s(E_0)\in N s(E_0)\;\! .$}
\smallskip

\noindent Then there exists an one-to-one correspondence between the semi-finite,
normal operator valued weights
\medskip

\centerline{$E : M^+\! \longrightarrow \overline{N}\;\!^+\!$ of support $\;\! s(E)\leq s(E_0)\;\! ,$}
\smallskip

\noindent and the semi-finite, normal operator valued weights
\medskip

\centerline{$\widetilde{E} : s(E_0)M^+\! s(E_0) \longrightarrow \overline{N s(E_0)}\;\!^+ ,$}
\smallskip

\noindent such that
\begin{equation*}
\begin{split}
\widetilde{E}(a)=\;&\pi_{\substack{ {} \\ 0}}\big( E(a)\big)\;\! ,\qquad a\in s(E_0)M^+\! s(E_0)\;\! , \\
E(a)=\;&\pi_0^{-1}\Big( \widetilde{E} \big( s(E_0)\;\! a\;\! s(E_0)\big) \Big)\;\! ,\qquad a\in M^+\;\! .
\end{split}
\end{equation*}
Moreover, by this correspondence
\begin{equation*}
\begin{split}
\mathfrak{M}_{\widetilde{E}}=\;&\mathfrak{M}_{E}\cap\big( s(E_0) M s(E_0)\big) , \\
\mathfrak{M}_{E}\cap M^+=\;&\{ a\in M^+ ; s(E_0)\;\! a\;\! s(E_0)\in \mathfrak{M}_{\widetilde{E}}\}\;\! , \\
s(\widetilde{E})=\;&s(E)\;\! ,\text{in particular, }\widetilde{E_0}\text{ is faithful.}
\end{split}
\end{equation*}
\end{proposition}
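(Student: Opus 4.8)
The plan is to make the correspondence completely explicit through $\pi_0$ and reduce everything to bookkeeping with the projection $s(E_0)$. Since $p_0$ is the central support of $s(E_0)\in N'\cap M$ in $N$, the normal $*$-homomorphism $N\ni y\mapsto y\,s(E_0)$ has kernel $N(1_M-p_0)$, so $\pi_0 : Np_0\to Ns(E_0)$ is a normal $*$-isomorphism; moreover $s(E_0)=\pi_0(p_0)$ is the unit of $Ns(E_0)$, $s(E_0)\le p_0$, and $Ns(E_0)\subset s(E_0)Ms(E_0)$ because $s(E_0)\in N'\cap M$. Being a normal $*$-isomorphism, $\pi_0$ extends canonically to an order isomorphism of the subcone $\overline{Np_0}^+=\{m\in\overline{N}^+\,;\,(1_M-p_0)\,m=0\}$ of $\overline{N}^+$ onto $\overline{Ns(E_0)}^+$. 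The preliminary fact I would establish first is: if $E : M^+\to\overline{N}^+$ is a semi-finite, normal operator valued weight with $s(E)\le s(E_0)$, then $E$ is $\overline{Np_0}^+$-valued and $E(a)=E\big(s(E_0)\,a\,s(E_0)\big)$ for every $a\in M^+$. Indeed $1_M-p_0\le 1_M-s(E)$ and $1_M-s(E_0)\le 1_M-s(E)$, so positivity of $E$ and the definition of the support give $E(1_M-p_0)=0=E(1_M-s(E_0))$; then the $N$-module property yields $(1_M-p_0)E(a)(1_M-p_0)=E\big((1_M-p_0)\,a\,(1_M-p_0)\big)\le\|a\|\,E(1_M-p_0)=0$, while $E(a)=E\big(s(E_0)\,a\,s(E_0)\big)$ is precisely (\ref{reduced-oper.val.}) with the projection $e=s(E_0)$. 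I would also record the triviality that an element of $\overline{N}^+$ majorized by an element of $N^+$ already lies in $N^+$.

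With this, the two assignments are well defined: for $E$ with $s(E)\le s(E_0)$ set $\widetilde E(a):=\pi_0\big(E(a)\big)$ for $a\in s(E_0)M^+s(E_0)$ (legitimate since $E(a)\in\overline{Np_0}^+$), and conversely, from a semi-finite, normal operator valued weight $\widetilde E : s(E_0)M^+s(E_0)\to\overline{Ns(E_0)}^+$ set $E(a):=\pi_0^{-1}\big(\widetilde E(s(E_0)\,a\,s(E_0))\big)$ for $a\in M^+$. Additivity, positive homogeneity and normality transfer at once, using that an increasing net $a_\lambda\nearrow a$ in $s(E_0)M^+s(E_0)$ is the same thing in $M^+$. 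For the $Ns(E_0)$-module property of $\widetilde E$ one writes an element of $Ns(E_0)$ as $y\,s(E_0)$ with $y\in N$, uses $s(E_0)\in N'\cap M$ to see that $(y\,s(E_0))^*a\,(y\,s(E_0))=y^*ay$ for $a\in s(E_0)M^+s(E_0)$, and transports the $N$-module property of $E$ through $\pi_0$ (with $p_0\,E(a)\,p_0=E(a)$); the module property of the $E$ built from $\widetilde E$ is the same computation in reverse. The preliminary fact then gives immediately $\pi_0^{-1}\big(\widetilde E(s(E_0)\,a\,s(E_0))\big)=E\big(s(E_0)\,a\,s(E_0)\big)=E(a)$ and $\pi_0\big(E(a)\big)=\widetilde E(a)$ for $a\in s(E_0)M^+s(E_0)$, so the two assignments are mutually inverse, and the $E$ produced from $\widetilde E$ satisfies $s(E)\le s(E_0)$ because $E(1_M-s(E_0))=\pi_0^{-1}(\widetilde E(0))=0$. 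Semi-finiteness of each constructed weight will then be obtained by exhibiting in its $\mathfrak N$ a net converging in the $s^*$-topology to the relevant unit --- for $\widetilde E$ the net $s(E_0)\,e_\iota\,s(E_0)$ with $(e_\iota)$ an approximate unit in $\mathfrak M_E\cap M^+$, and for the $E$ built from $\widetilde E$ the net $z_\iota+(1_M-s(E_0))$ with $z_\iota\in\mathfrak N_{\widetilde E}$, $z_\iota\to s(E_0)$ --- using (\ref{reduced-oper.val.}) and that $\mathfrak N_E$ is a left ideal.

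Finally, the three displayed identities. For $x\in s(E_0)Ms(E_0)$ one has $\widetilde E(x^*x)\in Ns(E_0)\Leftrightarrow E(x^*x)\in(Np_0)^+\Leftrightarrow E(x^*x)\in N^+$ (the last step by the triviality above, since $E(x^*x)\in\overline{Np_0}^+$), hence $\mathfrak N_{\widetilde E}=\mathfrak N_E\cap s(E_0)Ms(E_0)$ and so $\mathfrak M_{\widetilde E}\subset\mathfrak M_E\cap s(E_0)Ms(E_0)$. For the reverse inclusion, given $c\in\mathfrak M_E\cap M^+$ with $c=s(E_0)\,c\,s(E_0)$, the relation $c(1_M-s(E_0))=0$ forces $(1_M-s(E_0))c^{1/2}=0$, hence $c^{1/2}s(E_0)=s(E_0)\,c^{1/2}s(E_0)\in s(E_0)Ms(E_0)$, and $(c^{1/2}s(E_0))^*(c^{1/2}s(E_0))=c$ shows $c^{1/2}s(E_0)\in\mathfrak N_{\widetilde E}$, so $c\in\mathfrak M_{\widetilde E}$; since $\mathfrak M_E$ is spanned by $\mathfrak M_E\cap M^+$ and $s(E_0)\,b\,s(E_0)\in\mathfrak M_E\cap M^+$ whenever $b\in\mathfrak M_E\cap M^+$ (again by $E(s(E_0)\,b\,s(E_0))=E(b)$), every element of $\mathfrak M_E\cap s(E_0)Ms(E_0)$ is a linear combination of such $c$'s, whence $\mathfrak M_{\widetilde E}=\mathfrak M_E\cap s(E_0)Ms(E_0)$. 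Then $\mathfrak M_E\cap M^+=\{a\in M^+\,;\,s(E_0)\,a\,s(E_0)\in\mathfrak M_{\widetilde E}\}$ follows from $E(a)=E(s(E_0)\,a\,s(E_0))$. For the supports, a projection $p\le s(E_0)$ satisfies $\widetilde E(p)=0\Leftrightarrow E(p)=0$, so the largest such $p$ is $s(E_0)\wedge(1_M-s(E))=s(E_0)-s(E)$ (as $s(E)\le s(E_0)$), whence $s(\widetilde E)=s(E_0)-(s(E_0)-s(E))=s(E)$; with $E=E_0$ this reads $s(\widetilde{E_0})=s(E_0)=1_{s(E_0)Ms(E_0)}$, i.e.\ $\widetilde{E_0}$ is faithful. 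The step requiring real care is the reverse inclusion $\mathfrak M_E\cap s(E_0)Ms(E_0)\subset\mathfrak M_{\widetilde E}$: because $\mathfrak N_E$ is only a left ideal, the $s(E_0)$-compressions have to be routed through the identity $E=E(s(E_0)\cdot s(E_0))$ and through positive square roots rather than handled naively.
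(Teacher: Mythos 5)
Your proposal is correct and follows essentially the same route as the paper: establish $E(a)=p_0E(a)p_0\in\overline{Np_0}\,^+$ and $E(a)=E\big(s(E_0)\,a\,s(E_0)\big)$ from $s(E)\le s(E_0)$, transport everything through $\pi_0$ and its extension to the extended positive parts, and then read off mutual inverseness, the domain identities and $s(\widetilde E)=s(E)$. The only deviations are cosmetic --- you verify semi-finiteness via approximate units where the paper exhibits explicit elements of the left ideals ($\mathfrak N_E\,s(E_0)\subset\mathfrak N_{\widetilde E}$ and $x\big(y+1_M-s(E_0)\big)\in\mathfrak N_E$), and you obtain $\mathfrak M_{\widetilde E}=\mathfrak M_E\cap\big(s(E_0)Ms(E_0)\big)$ through square roots at the level of $\mathfrak N$ rather than directly from the characterization of $\mathfrak M\cap M^+$ --- and both variants are sound.
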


\begin{proof}
Let $E : M^+\! \longrightarrow \overline{N}\;\!^+\!$ be a semi-finite, normal operator valued
weight with

\noindent $\;\! s(E)\leq s(E_0)\;\!$.

Since $1_M-p_0\leq 1_M-s(E_0)\leq 1_M-s(E)\;\!$, we have
\medskip

\centerline{$E(1_M-p_0)\leq E \big( 1_M-s(E_0)\big)\leq E \big( 1_M-s(E)\big) =0$}
\smallskip

\noindent and (\ref{reduced-oper.val.}) yields
\begin{equation}\label{range}
E(a)=E( p_0 a p_0) =p_0 E(a) p_0 \in \overline{p_0Np_0}\;\!^+ =\overline{Np_0}\;\!^+\;\! ,\qquad a\in M^+ .
\end{equation}
and
\begin{equation}\label{support}
E(a)=E\big( s(E_0)\;\! a\;\! s(E_0)\big)\;\! ,\qquad a\in M^+ .
\end{equation}

According to (\ref{range}) the composition $\pi_{\substack{ {} \\ 0}}\circ E$ is well defined, so an
operator valued weight $\widetilde{E} : s(E_0)M^+\! s(E_0) \longrightarrow \overline{N s(E_0)}\;\!^+$
can be defined by the formula
\smallskip

\centerline{$\widetilde{E}(a):=\pi_{\substack{ {} \\ 0}}\big( E(a)\big)\;\! ,\qquad a\in s(E_0)M^+\! s(E_0)\;\! .$}

\noindent It is normal, semi-finite and satisfying $s(\widetilde{E})=s(E)\;\!$:
\begin{itemize}
\item The normality is obvious.
\item The semi-finiteness is consequence of the semi-finiteness of $E$ because, by
(\ref{support}), $x\in \mathfrak{N}_{E}\Longleftrightarrow x\;\! s(E_0)\in \mathfrak{N}_{E}\;\!$,
and therefore $\mathfrak{N}_E s(E_0)\subset \mathfrak{N}_{\widetilde{E}}\;\!$.
\item By the definition of $\widetilde{E}\;\!$,
$\widetilde{E}\big( s(E_0)-s(E)\big) =\pi_{\substack{ {} \\ 0}}\Big( E\big( s(E_0)-s(E)\big)\Big) =0\;\!$, so
\smallskip

\centerline{$\qquad\quad s(E_0)-s(E)\leq s(E_0)-s(\widetilde{E})\Longleftrightarrow
s(\widetilde{E})\leq s(E)\;\! .$}
\smallskip

\noindent On the other hand, $0=\widetilde{E}\big( s(E_0)-s(\widetilde{E})\big) =
\pi_{\substack{ {} \\ 0}}\Big( E\big( s(E_0)-s(\widetilde{E})\big)\Big)$ yields
\smallskip

\centerline{$\qquad\quad s(E)-s(\widetilde{E}) s(E)=\big( s(E_0)-s(\widetilde{E})\big) s(E)=0
\Longleftrightarrow s(E)\leq s(\widetilde{E})\;\! .$}
\end{itemize}
\smallskip

For $\mathfrak{M}_{\widetilde{E}}=\mathfrak{M}_{E}\cap\big( s(E_0) M s(E_0)\big)$ it is
enough to show show that
\begin{equation}\label{to-tilde}
\mathfrak{M}_{\widetilde{E}}\cap \big( s(E_0) M^+\! s(E_0)\big) =
\mathfrak{M}_{E}\cap\big( s(E_0) M^+\! s(E_0)\big)
\end{equation}
because $\mathfrak{M}_{E}\cap\big( s(E_0) M s(E_0)\big)$ is the linear span of
$\mathfrak{M}_{E}\cap\big( s(E_0) M^+ s(E_0)\big)$. Indeed, each
$x\in \mathfrak{M}_{E}\cap\big( s(E_0) M s(E_0)\big)$ is on the one hand
linear combination of certain elements $a_j\in \mathfrak{M}_{E}\cap M^+\! ,1\leq j\leq n\;\!$,
and on the other hand $x=s(E_0) x s(E_0)\;\!$, so $x$ is a linear combination of
the elements $s(E_0) a_j s(E_0) \in \mathfrak{M}_{E}\cap\big( s(E_0) M^+\! s(E_0)\big)$.

Now (\ref{to-tilde}) is consequence of the definition of $\widetilde{E}\;\!$. Indeed, for
$a\in s(E_0) M^+\! s(E_0)$ we have :

\centerline{$a\in \mathfrak{M}_{\widetilde{E}}\Longleftrightarrow \widetilde{E}(a)\in N s(E_0)
\Longleftrightarrow E(a)\in N p_0 \overset{(\ref{range})}{\Longleftrightarrow} E(a)\in N .$}
\smallskip

Let next $\widetilde{E} : s(E_0)M^+\! s(E_0) \longrightarrow \overline{N s(E_0)}\;\!^+$
be a semi-finite, normal operator

\noindent valued weight and define the operator valued weight
$E : M^+\! \longrightarrow \overline{N}\;\!^+\!$ by
\medskip

\centerline{$E(a)=\pi_0^{-1}\Big( \widetilde{E} \big( s(E_0)\;\! a\;\! s(E_0)\big) \Big)\;\! ,
\qquad a\in M^+\;\! .$}
\smallskip

\noindent Then $E$ is normal, semi-finite and of support $s(E)\leq s(E_0)\;\!$:
\begin{itemize}
\item The normality is obvious.
\item Since $\big\{ x\big( y+1_M-s(E_0)\big)\;\! ; x\in M ,y\in\mathfrak{N}_{\widetilde{E}}\big\}
\subset \mathfrak{N}_E$ and $s(E_0)$ belongs to the $w$-closure of $\mathfrak{N}_{\widetilde{E}}\;\!$,
it follows that every $x\in M$ belongs to the $w$-closure of $\mathfrak{N}_E\;\!$, that is $E$
is semi-finite.
\item Since $E\;\!$, $E\big( 1_M - s(E_0)\big) =
\pi_0^{-1}\Big( \widetilde{E} \big( s(E_0)\big( 1_M - s(E_0)\big)s(E_0)\big) \Big) =0\;\!$, we

\noindent have $1_M - s(E_0)\leq 1_M - s(E)\Longleftrightarrow s(E)\leq s(E_0)\;\!$.
\end{itemize}

The equality $\mathfrak{M}_{E}\cap M^+=\{ a\in M^+ ; s(E_0)\;\! a\;\! s(E_0)\in
\mathfrak{M}_{\widetilde{E}}\}$ follows by using
\begin{itemize}
\item $\mathfrak{M}_{E}\cap M^+=\{ a\in M^+ ; s(E_0)\;\! a\;\! s(E_0)\in \mathfrak{M}_{E}\}\;\!$,
consequence of (\ref{support}), and
\item the equivalence $s(E_0)\;\! a\;\! s(E_0)\in \mathfrak{M}_{E}\Longleftrightarrow
s(E_0)\;\! a\;\! s(E_0)\in \mathfrak{M}_{\widetilde{E}}$ for $a\in M^+\!$, consequence
of the above proved equality $\mathfrak{M}_{\widetilde{E}}=\mathfrak{M}_{E}\cap
\big( s(E_0) M s(E_0)\big)$.
\end{itemize}

Finally we show that the associations $E\longmapsto \widetilde{E}$ and $\widetilde{E}\longmapsto E$
are mutually inverse applications. Indeed, for each $a\in M^+$,
\smallskip

\centerline{$\pi_0^{-1}\bigg(\! \pi_{\substack{ {} \\ 0}}\Big( E\big( s(E_0)\;\! a\;\! s(E_0)\big) \Big)\! \bigg) =
E\big( s(E_0)\;\! a\;\! s(E_0)\big) \overset{(\ref{support})}{=} E(a)$}
\smallskip

\noindent and, for each $a\in s(E_0)M^+\! s(E_0)\;\!$,
\smallskip

\centerline{$\pi_{\substack{ {} \\ 0}}\bigg(\! \pi_0^{-1}\Big( \widetilde{E}\big( s(E_0)\;\! a\;\! s(E_0)\big)
\Big)\! \bigg) = \widetilde{E}\big( s(E_0)\;\! a\;\! s(E_0)\big) = \widetilde{E}(a)\;\! ,$}
\smallskip

\noindent where the last equality is trivial because $a\in s(E_0)M^+\! s(E_0)\;\!$,

\end{proof}

Now we are ready to describe the situation $E_2 \restriction (\mathfrak{M}_{E_1}\cap M^+) =
E_1 \restriction (\mathfrak{M}_{E_1}\cap M^+)$ for general semi-finite, normal operator valued
weights $E_1\;\! ,E_2 : M\longrightarrow \overline{N}\;\!^+$, $M$ a

\noindent $W^*$-algebra and $1_M\in N\subset M$ a $W^*$-subalgebra.
The proof consists in reduction, based on Proposition \ref{reduce}, to the case of
faithful $E_1\;\!$. We prove also a characterization of this situation in terms of scalar valued
 weights.

\begin{theorem}\label{general-ineq}
Let $M$ be a $W^*$-algebra, $1_M\in N\subset M$ a $W^*$-subalgebra, and

\noindent $E_1\;\! ,E_2 : M^+\longrightarrow \overline{N}\;\!^+\!$ semi-finite, normal
operator valued weights.
\medskip

\hspace{3 pt}$({\rm i})$ If $\mathfrak{M}_{E_1}\subset \mathfrak{M}_{E_2}\text{ and }\;\! E_2(a)=E_1(a)\text{ for
all }a\in \mathfrak{M}_{E_1}\cap M^+$, then $E_2\leq E_1\;\! .$
\medskip

$({\rm ii})$ The condition
\begin{equation}\label{vector}
\mathfrak{M}_{E_1}\! \subset \mathfrak{M}_{E_2}\text{ and }\;\! E_2(a)=E_1(a)\text{ for
all }a\in \mathfrak{M}_{E_1}\cap M^+
\end{equation}
is equivalent to the condition
\begin{equation}\label{scalar}
\begin{split}
&\text{for every faithful, semi-finite, normal weight }\varphi\text{ on }N , \\
&\mathfrak{M}_{\varphi\circ E_1}\! \subset
\mathfrak{M}_{\varphi\circ E_2}\text{ and }\;\! (\varphi\circ E_2)(a)
=(\varphi\circ E_1)(a)\text{ for all }a\in \mathfrak{M}_{\varphi\circ E_1}\cap M^+.
\end{split}
\end{equation}
\end{theorem}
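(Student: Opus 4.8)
The plan is to reduce both statements to the case of a faithful $E_1$, treated in Lemmas \ref{faithful-ineq} and \ref{inv-faithful-ineq}, the reduction being carried out by Proposition \ref{reduce} applied with $E_0:=E_1$. First I would settle the supports. If (\ref{vector}) holds, then since $E_1(1_M-s(E_1))=0$ the projection $1_M-s(E_1)$ lies in $\mathfrak{M}_{E_1}\cap M^+$, so $E_2(1_M-s(E_1))=E_1(1_M-s(E_1))=0$ and hence $s(E_2)\le s(E_1)$. If instead (\ref{scalar}) holds, then for each faithful, semi-finite, normal weight $\varphi$ on $N$ the trivial scalar fact recalled before Lemma \ref{faithful-ineq} gives $\varphi\circ E_2\le\varphi\circ E_1$, and Proposition \ref{H-revisited}(ii) then yields $E_2\le E_1$, in particular $s(E_2)\le s(E_1)$. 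Thus in every situation we may invoke Proposition \ref{reduce} with $E_0:=E_1$: writing $s:=s(E_1)$, $\widetilde M:=sMs$, $\widetilde N:=Ns$ (a $W^*$-subalgebra of $\widetilde M$ with unit $s$), $p_0$ the central support of $s$ in $N$ and $\pi_0:Np_0\to\widetilde N$ the $*$-isomorphism $y\mapsto ys$, we obtain reduced operator valued weights $\widetilde{E_1},\widetilde{E_2}:\widetilde M^+\to\overline{\widetilde N}\,^+$ with $\widetilde{E_1}$ faithful, satisfying $\widetilde{E_j}(a)=\pi_0(E_j(a))$ for $a\in\widetilde M^+$, $E_j(a)=\pi_0^{-1}(\widetilde{E_j}(sas))$ for $a\in M^+$, $\mathfrak{M}_{\widetilde{E_j}}=\mathfrak{M}_{E_j}\cap\widetilde M$ and $\mathfrak{M}_{E_j}\cap M^+=\{a\in M^+;sas\in\mathfrak{M}_{\widetilde{E_j}}\}$.

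From these identities, together with $E_j(a)=E_j(sas)$ (by (\ref{reduced-oper.val.}), since $E_j(1_M-s)=0$) and the fact that $\pi_0^{-1}$ extends to an order isomorphism $\overline{\widetilde N}\,^+\to\overline{Np_0}\,^+\subset\overline N\,^+$, it is routine to check that (\ref{vector}) is equivalent to
\[
\mathfrak{M}_{\widetilde{E_1}}\subset\mathfrak{M}_{\widetilde{E_2}}\ \text{ and }\ \widetilde{E_2}(a)=\widetilde{E_1}(a)\ \text{ for all }a\in\mathfrak{M}_{\widetilde{E_1}}\cap\widetilde M^+,
\]
and that $E_2\le E_1$ is equivalent to $\widetilde{E_2}\le\widetilde{E_1}$. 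Statement (i) now follows: assuming (\ref{vector}), the displayed condition holds with $\widetilde{E_1}$ faithful, so Lemma \ref{faithful-ineq} applied in $\widetilde M$ with subalgebra $\widetilde N$ gives $\widetilde{E_2}\le\widetilde{E_1}$, i.e. $E_2\le E_1$.

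For (ii) I would also translate the scalar side. For a normal weight $\rho$ on $M$ with $\rho(1_M-s)=0$, let $\rho_s$ be its reduction to $\widetilde M$; by (\ref{reduced}) one has $\mathfrak{M}_{\rho_s}=\mathfrak{M}_\rho\cap\widetilde M$ and $\mathfrak{M}_\rho\cap M^+=\{a\in M^+;sas\in\mathfrak{M}_{\rho_s}\}$. Moreover every faithful, semi-finite, normal weight $\varphi$ on $N$ restricts to such a weight on $Np_0$, and $\theta:=(\varphi\restriction(Np_0))\circ\pi_0^{-1}$ is a faithful, semi-finite, normal weight on $\widetilde N$ with $\theta\circ\widetilde{E_j}=(\varphi\circ E_j)_s$ (using $E_j(a)\in\overline{Np_0}\,^+$ and $E_j(a)=\pi_0^{-1}(\widetilde{E_j}(sas))$); conversely every faithful, semi-finite, normal weight $\theta$ on $\widetilde N$ arises this way, by extending $\theta\circ\pi_0$ from $Np_0$ to $N$ with the help of any faithful, semi-finite, normal weight on $N(1_M-p_0)$. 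Since $\varphi$ faithful gives $s(\varphi\circ E_j)=s(E_j)\le s$, the reduction fact shows that (\ref{scalar}) is equivalent to: for every faithful, semi-finite, normal weight $\theta$ on $\widetilde N$, $\mathfrak{M}_{\theta\circ\widetilde{E_1}}\subset\mathfrak{M}_{\theta\circ\widetilde{E_2}}$ and $\theta\circ\widetilde{E_2}=\theta\circ\widetilde{E_1}$ on $\mathfrak{M}_{\theta\circ\widetilde{E_1}}\cap\widetilde M^+$. By Lemma \ref{faithful-ineq} together with its converse Lemma \ref{inv-faithful-ineq}, applied in $\widetilde M$, $\widetilde N$ with $\widetilde{E_1}$ faithful, this last condition is equivalent to the displayed condition on $\widetilde{E_1},\widetilde{E_2}$, hence — by the previous paragraph — to (\ref{vector}), which proves (ii).

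I expect the main obstacle to be precisely this bookkeeping with supports and conditionals: one must establish $s(E_2)\le s(E_1)$ before Proposition \ref{reduce} becomes applicable, and one must match $\mathfrak{M}_{\varphi\circ E_j}$ with $\mathfrak{M}_{\theta\circ\widetilde{E_j}}$ through the scalar analogue of (\ref{reduced}), exactly because — as emphasized in the introduction — there is no generic relation between $\mathfrak{M}_{E_j}$ and $\mathfrak{M}_{\varphi\circ E_j}$.
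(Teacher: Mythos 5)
Your proposal is correct and follows essentially the same route as the paper: establish $s(E_2)\le s(E_1)$ from either hypothesis, reduce to faithful $\widetilde{E_1},\widetilde{E_2}$ on $s(E_1)Ms(E_1)$ via Proposition \ref{reduce} with $E_0=E_1$, and transfer both the operator-valued condition and the scalar condition (the latter through the correspondence $\varphi\leftrightarrow\theta$ built from $\pi_0$ and an auxiliary faithful weight on $N(1_M-p_0)$) so that Lemmas \ref{faithful-ineq} and \ref{inv-faithful-ineq} apply. The only cosmetic deviation is that you obtain $s(E_2)\le s(E_1)$ from (\ref{scalar}) via the trivial scalar inequality plus Proposition \ref{H-revisited}(ii), where the paper simply evaluates both sides at $1_M-s(E_1)$.
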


\begin{proof}
The assumptions in both (\ref{vector}) and (\ref{scalar}) imply the inequality $s(E_2)\leq s(E_1)\;\!$.

\noindent Indeed, assuming (\ref{vector}),
\begin{equation*}
\begin{split}
1_M-s(E_1)\in \mathfrak{M}_{E_1} \Longrightarrow\;&
E_2\big( 1_M-s(E_1)\big) =E_1\big( 1_M-s(E_1)\big) =0 \\
\Longrightarrow\;&1_M-s(E_1)\leq 1_M-s(E_2)\Longleftrightarrow s(E_2)\leq s(E_1)\;\! .
\end{split}
\end{equation*}
On the other hand, assuming (\ref{scalar}), we have for any faithful, semi-finite, normal

\noindent weight $\varphi\text{ on }N$:
\begin{equation*}
\begin{split}
1_M-s(E_1)\in \mathfrak{M}_{\varphi\circ E_1} \Longrightarrow\;&
(\varphi\circ E_2)\big( 1_M-s(E_1)\big) =(\varphi\circ E_1)\big( 1_M-s(E_1)\big) =0 \\
\Longrightarrow\;&E_2\big( 1_M-s(E_1)\big) =0 \\
\Longrightarrow\;&1_M-s(E_1)\leq 1_M-s(E_2)\Longleftrightarrow s(E_2)\leq s(E_1)\;\! .
\end{split}
\end{equation*}

Consequently, we can start the proof of both (i) and (ii) with the assumption $s(E_2)\leq s(E_1)\;\!$.
Let $p_1$ denote the central support of $s(E_1)\in N'\cap M$ in $N$.
Using the notations from Proposition \ref{reduce}, we can consider the semi-finite, normal
operator valued weights
$\widetilde{E_j} : s(E_1)M^+\! s(E_1) \longrightarrow \overline{N s(E_1)}\;\!^+\! , j=1\;\! ,2\;\!$,
defined by the formula
\begin{equation}\label{reduced-weight}
\widetilde{E_j}(a)=\pi_1\big( E_j(a)\big)\;\! ,\qquad a\in s(E_1)M^+\! s(E_1)\;\! ,
\end{equation}
where $\pi_1$ denotes the ${}^*$-isomorphism $N p_1\ni y\longmapsto y s(E_1)\in N s(E_1)\;\!$.
According to

\noindent Proposition \ref{reduce}, the operator valued weight $\widetilde{E_1}$ is also
faithful,
\begin{equation}\label{reduced-dom}
\mathfrak{M}_{\widetilde{E_j}}=\mathfrak{M}_{E_j}\cap \big( s(E_1) M s(E_1)\big) ,\qquad
 j=1\;\! ,2\;\! ,
\end{equation}
\begin{equation}\label{extended-dom}
\mathfrak{M}_{E_j}\cap M^+=\{ a\in M^+ ; s(E_1)\;\! a\;\! s(E_1)\in \mathfrak{M}_{\widetilde{E_j}}\}\;\! ,
\quad j=1\;\! ,2\;\! ,
\end{equation}
and the inversion formula
\begin{equation}\label{inversion}
E_j(a)=\pi_1^{-1}\Big( \widetilde{E_j} \big( s(E_1)\;\! a\;\! s(E_1)\big) \Big)\;\! ,\qquad a\in M^+,\;\!
j=1\;\! ,2
\end{equation}
holds true.
\smallskip

Now we pass to the proof of (i).

Assuming (\ref{vector}) and using (\ref{reduced-dom}) and (\ref{reduced-weight}), we deduce :
\medskip

\centerline{$\mathfrak{M}_{\widetilde{E_1}}=\mathfrak{M}_{E_1}\cap \big( s(E_1) M s(E_1)\big)
\subset \mathfrak{M}_{E_2}\cap \big( s(E_1) M s(E_1)\big) =\mathfrak{M}_{\widetilde{E_2}}$}
\smallskip

\noindent and, for each $a\in \mathfrak{M}_{\widetilde{E_1}}\cap \big( s(E_1) M^+\! s(E_1)\big)$,
\medskip

\centerline{$\widetilde{E_2}(a)=\pi_1\big( E_2(a)\big) =\pi_1\big( E_1(a)\big) =\widetilde{E_1}(a)\;\! .$}
\smallskip

\noindent Since $\widetilde{E_1}$ is faithful, Lemma \ref{faithful-ineq} entails that
$\widetilde{E_2}\leq \widetilde{E_1}$ and using (\ref{inversion}), we conclude that $E_2\leq E_1\;\!$.
\smallskip

Concerning (ii), it will be implied by Lemmas \ref{faithful-ineq} and  \ref{inv-faithful-ineq} once we prove that

\noindent (\ref{vector}) is equivalent to
\begin{equation}\label{vector-red}
\mathfrak{M}_{\widetilde{E_1}}\! \subset \mathfrak{M}_{\widetilde{E_2}}\text{ and }\;\! {\widetilde{E_2}}(a)=
{\widetilde{E}}_1(a)\text{ for all }a\in \mathfrak{M}_{\widetilde{E_1}}\cap \big( s(E_1) M^+\! s(E_1)\big)
\tag{\ref{vector}-bis}
\end{equation}
and (\ref{scalar}) is equivalent to
\begin{equation}\label{scalar-red}
\begin{split}
&\text{for every faithful, semi-finite, normal weight }\psi\text{ on }Ns(E_1)\;\! , \\
&\mathfrak{M}_{\psi\circ {\widetilde{E_1}}}\! \subset
\mathfrak{M}_{\psi\circ {\widetilde{E_2}}}\text{ and }\;\! (\psi\circ {\widetilde{E_2}})(a)
=(\psi\circ {\widetilde{E_1}})(a)\text{ holds true} \\
&\text{for all }a\in \mathfrak{M}_{\psi\circ {\widetilde{E_1}}}\cap \big( s(E_1) M^+\! s(E_1)\big)\;\! .
\end{split}
\tag{\ref{scalar}-bis}
\end{equation}

Implication (\ref{vector})$\;\!\Longrightarrow\;\!$(\ref{vector-red}) was already shown
and used in the proof of (i), it is immediate consequence of
(\ref{reduced-dom}) and (\ref{reduced-weight}). Also the proof of the inverse implication
(\ref{vector-red})$\;\!\Longrightarrow\;\!$(\ref{vector}) is easy, it follows immediately by
using (\ref{extended-dom}) and (\ref{inversion}).
\smallskip

Let us next prove (\ref{scalar})$\;\!\Longrightarrow\;\!$(\ref{scalar-red}).

For let $\psi$ be any faithful, semi-finite, normal weight on $Ns(E_1)\;\!$.
Choosing some faithful, semi-finite, normal weight $\theta$ on $N (1_M-p_1)\;\!$, let us consider
the faithful, semi-finite, normal weight $\varphi :=\psi\circ \pi_1 +\theta$ on $N$.
By (\ref{scalar}) we have
\begin{equation}\label{by-scalar}
\mathfrak{M}_{\varphi\circ E_1}\! \subset
\mathfrak{M}_{\varphi\circ E_2}\text{ and }\;\! (\varphi\circ E_2)(a)
=(\varphi\circ E_1)(a)\text{ for all }a\in \mathfrak{M}_{\varphi\circ E_1}\cap M^+.
\end{equation}

Now let $a\in \mathfrak{M}_{\psi\circ {\widetilde{E_1}}}\cap \big( s(E_1) M^+\! s(E_1)\big)$
be arbitrary. Since
\smallskip

\centerline{$a\in s(E_1) M^+\! s(E_1)\subset p_1M^+p_1\Longrightarrow
E_j(a)\in p_1\overline{N}\;\!^+p_1\Longrightarrow \theta\big( E_j(a)\big) =0\;\! ,\quad
j=1\;\! ,2\;\! ,$}
\smallskip

\noindent we have
\smallskip

\centerline{$(\varphi\circ E_1)(a)=\psi\Big( \pi_1\big( E_1(a)\big)\Big) +\theta\big( E_1(a)\big)
\overset{(\ref{reduced-weight})}{=}(\psi\circ \widetilde{E_1})(a)<+\infty\;\! .$}
\smallskip

\noindent Thus $a\in \mathfrak{M}_{\varphi\circ E_1}\! \subset \mathfrak{M}_{\varphi\circ E_2}$
and, using (\ref{by-scalar}), we conclude :
\begin{equation*}
\begin{split}
(\psi\circ \widetilde{E_1})(a)=\;&(\varphi\circ E_1)(a)=(\varphi\circ E_2)(a)=
\psi\Big( \pi_1\big( E_2(a)\big)\Big) +\theta\big( E_2(a)\big) \\
\overset{(\ref{reduced-weight})}{=}&(\psi\circ \widetilde{E_2})(a)\;\! .
\end{split}
\end{equation*}

Finally we prove also the inverse implication (\ref{scalar-red})$\;\!\Longrightarrow\;\!$(\ref{scalar}).

For let $\varphi$ be any faithful, semi-finite, normal weight on $N$.
We define the faithful, semi-finite, normal weight $\psi$ on $N s(E_1)$ by the formula
\smallskip

\centerline{$\psi (b):=\varphi \big( \pi_1^{-1}(b)\big)\;\! ,\qquad b\in \big( N s(E_1)\big)^+.$}
\smallskip

\noindent By (\ref{scalar-red}) we have
\begin{equation}\label{by-scalar-red}
\begin{split}
&\mathfrak{M}_{\psi\circ {\widetilde{E_1}}}\! \subset
\mathfrak{M}_{\psi\circ {\widetilde{E_2}}}\text{ and }\;\! (\psi\circ {\widetilde{E_2}})(a)
=(\psi\circ {\widetilde{E_1}})(a)\text{ holds true} \\
&\text{for all }a\in \mathfrak{M}_{\psi\circ {\widetilde{E_1}}}\cap \big( s(E_1) M^+\! s(E_1)\big)\;\! .
\end{split}
\end{equation}

Let $a\in \mathfrak{M}_{\varphi\circ E_1}\cap M^+$ be arbitrary. Then
\begin{equation*}
+\infty >(\varphi\circ E_1)(a) \overset{(\ref{inversion})}{=} \varphi \bigg(\! 
\pi_1^{-1}\Big( \widetilde{E_j} \big( s(E_1)\;\! a\;\! s(E_1)\big) \Big)\! \bigg)
=(\psi\circ \widetilde{E_1}) \big( s(E_1)\;\! a\;\! s(E_1)\big) .
\end{equation*}
Thus $s(E_1)\;\! a\;\! s(E_1)\in \mathfrak{M}_{\psi\circ {\widetilde{E_1}}}\! \subset
\mathfrak{M}_{\psi\circ {\widetilde{E_2}}}$ and, using (\ref{by-scalar-red}), we conclude :
\begin{equation*}
\begin{split}
(\varphi\circ E_1)(a) =\;\;& (\psi\circ \widetilde{E_1}) \big( s(E_1)\;\! a\;\! s(E_1)\big)
=(\psi\circ \widetilde{E_2}) \big( s(E_1)\;\! a\;\! s(E_1)\big) \\
\overset{(\ref{inversion})}{=}&\psi \Big( \pi_1 \big( E_2(a) \big)\! \Big)
=(\varphi\circ E_2)(a)\;\! .
\end{split}
\end{equation*}

\end{proof}

The next equality criterion is an immediate consequence of Theorem \ref{general-ineq} :

\begin{corollary}\label{general-eq}
Let $M$ be a $W^*$-algebra, and $1_M\in N\subset M$ a $W^*$-subalgebra. If

\noindent $E_1\;\! ,E_2 : M^+\longrightarrow \overline{N}\;\!^+$ are semi-finite, normal
operator valued weights such that
\medskip

\centerline{$\mathfrak{M}_{E_1}=\mathfrak{M}_{E_2}\text{ and }\;\! E_2(a)=E_1(a)\text{ for
all }a\in \mathfrak{M}_{E_1}\cap M^+=\mathfrak{M}_{E_2}\cap M^+\;\! ,$}
\medskip

\noindent then $E_2=E_1\;\!$.
\end{corollary}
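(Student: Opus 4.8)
The plan is to derive the equality $E_2 = E_1$ from the inequality part, Theorem \ref{general-ineq} (i), applied twice with the roles of $E_1$ and $E_2$ interchanged. First I would unpack the hypothesis: the assumed equality $\mathfrak{M}_{E_1} = \mathfrak{M}_{E_2}$ yields simultaneously both inclusions $\mathfrak{M}_{E_1} \subset \mathfrak{M}_{E_2}$ and $\mathfrak{M}_{E_2} \subset \mathfrak{M}_{E_1}$, and the assumed equality $E_2(a) = E_1(a)$ for all $a$ in the common positive cone $\mathfrak{M}_{E_1} \cap M^+ = \mathfrak{M}_{E_2} \cap M^+$ holds symmetrically in $E_1$ and $E_2$. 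Hence both of the following are in force:
\begin{equation*}
\mathfrak{M}_{E_1}\subset\mathfrak{M}_{E_2}\text{ and }E_2(a)=E_1(a)\text{ for all }a\in\mathfrak{M}_{E_1}\cap M^+,
\end{equation*}
as well as the same statement with the indices $1$ and $2$ swapped.

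Next I would invoke Theorem \ref{general-ineq} (i) directly. Applied to the pair $(E_1,E_2)$ it gives $E_2 \leq E_1$; applied to the pair $(E_2,E_1)$ — which is legitimate since that theorem makes no faithfulness or support hypothesis on either weight — it gives $E_1 \leq E_2$. Combining the two inequalities in the extended positive cone $\overline{N}^+$ forces $E_1(a) = E_2(a)$ for every $a \in M^+$, that is, $E_2 = E_1$.

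There is essentially no obstacle here: all the substantive work — the reduction via Proposition \ref{reduce} to the faithful case and the appeal to the scalar inequality criterion Theorem \ref{ineq} through Lemma \ref{faithful-ineq} — has already been carried out inside the proof of Theorem \ref{general-ineq}. The only point worth a word of care is that one must use part (i) of that theorem (the genuinely two-sided comparison), not part (ii), so that no auxiliary faithful weight $\varphi$ on $N$ need be introduced; the corollary then follows in one line once both inclusions and the symmetry of the hypothesis are noted.
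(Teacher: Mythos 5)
Your proof is correct and is exactly the argument the paper intends: the corollary is stated as an immediate consequence of Theorem \ref{general-ineq}, and the symmetric double application of part (i) to obtain $E_2\leq E_1$ and $E_1\leq E_2$ in $\overline{N}\,^+$ is the expected one-line deduction. No gaps.
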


\hfill $\square$
\medskip

For $E_1\;\! ,E_2$ of equal supports, the situation $E_2 \restriction (\mathfrak{M}_{E_1}\cap M^+) =
E_1 \restriction (\mathfrak{M}_{E_1}\cap M^+)$ is equivalent also to a weakened version of
(\ref{scalar}). Let us first consider the case of

\noindent faithful operator valued weights and formulate the
following variant of \cite{H2},  Remark on page 360 :

\begin{lemma}\label{cocycle-cond}
Let $M$ be a $W^*$-algebra, $1_M\in N\subset M$ a $W^*$-subalgebra, and $E_1\;\! ,E_2$
two faithful, semi-finite, normal operator valued weights $M^+\! \longrightarrow \overline{N}\;\!^+$.
If for some faithful, semi-finite, normal weight $\varphi$ on $N$ we have
\begin{equation}\label{scalar-cond}
\mathfrak{M}_{\varphi\circ E_1}\! \subset
\mathfrak{M}_{\varphi\circ E_2}\text{ and }\;\! (\varphi\circ E_2)(a)
=(\varphi\circ E_1)(a)\text{ for all }a\in \mathfrak{M}_{\varphi\circ E_1}\cap M^+,
\end{equation}
then {\rm (\ref{scalar-cond})} holds true for every faithful, semi-finite, normal weight $\varphi$ on $N$.
\end{lemma}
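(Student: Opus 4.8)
The plan is to reduce condition (\ref{scalar-cond}) to a statement about the Connes cocycle $(D(\varphi\circ E_2):D(\varphi\circ E_1))_t$ that does not actually depend on $\varphi$, so that its validity for one faithful $\varphi$ forces it for all. First I would invoke Theorem \ref{cocycle}~(iii): for faithful, semi-finite, normal weights $\varphi\circ E_1$ and $\varphi\circ E_2$ on $M$ (faithfulness is guaranteed by Proposition \ref{H-revisited}~(i)), condition (\ref{scalar-cond}) — which is precisely ``$(\varphi\circ E_2)(a)=(\varphi\circ E_1)(a)$ for all $a\in\mathfrak{F}_{\varphi\circ E_1}$'' together with the inclusion $\mathfrak{M}_{\varphi\circ E_1}\subset\mathfrak{M}_{\varphi\circ E_2}$, i.e. $\mathfrak{F}_{\varphi\circ E_1}\subset\mathfrak{F}_{\varphi\circ E_2}$ — is equivalent to the assertion that $t\mapsto (D(\varphi\circ E_2):D(\varphi\circ E_1))_t$ extends $w$-analytically to the strip $-\tfrac12\le\operatorname{Im}\zeta\le 1$ with $(D(\varphi\circ E_2):D(\varphi\circ E_1))_{-i/2}$ isometric. (I should first note the easy equivalence of the two formulations of (\ref{scalar-cond}): ``$\mathfrak{M}_{\varphi\circ E_1}\subset\mathfrak{M}_{\varphi\circ E_2}$ and equality on $\mathfrak{M}_{\varphi\circ E_1}\cap M^+$'' versus ``equality on $\mathfrak{F}_{\varphi\circ E_1}$'', which is immediate since $\mathfrak{F}_{\varphi\circ E_1}=\mathfrak{M}_{\varphi\circ E_1}\cap M^+$ and the equality of a weight with a smaller-valued one on a face already forces the domain inclusion — exactly the trivial observation made in the text before Lemma \ref{faithful-ineq}.)

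The key step is then to identify this cocycle independently of $\varphi$. By \cite{H1}, Proposition 4.9 (the chain-rule for operator-valued weights, cited in the Preliminaries), we have $E_j(\sigma^{\varphi\circ E_j}_t(x))=\sigma^\varphi_t(E_j(x))$, and more relevantly the Connes cocycle of $\varphi\circ E_2$ with respect to $\varphi\circ E_1$ coincides with the (operator-valued-weight) Connes cocycle $(DE_2:DE_1)_t$, which is an element of $M$ intrinsic to the pair $E_1,E_2$ and does not involve $\varphi$ at all; this is precisely the content invoked in \cite{H2} and underlies Haagerup's Radon--Nikodym theory for operator-valued weights. Hence the analytic-continuation-and-isometry property of $t\mapsto(D(\varphi\circ E_2):D(\varphi\circ E_1))_t=(DE_2:DE_1)_t$ is a property of $E_1,E_2$ alone. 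Therefore, if it holds for one faithful, semi-finite, normal $\varphi$, it holds verbatim for every such $\varphi$, and applying Theorem \ref{cocycle}~(iii) in the reverse direction for an arbitrary $\varphi$ yields (\ref{scalar-cond}) for that $\varphi$.

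I expect the main obstacle to be making the identification ``$(D(\varphi\circ E_j):D(\varphi\circ E_k))_t$ is independent of $\varphi$'' fully rigorous without simply quoting \cite{H2}. The cleanest route is probably to use the Pedersen--Takesaki / Haagerup construction: enlarging $M$ to $M\rtimes_{\sigma^\varphi}\mathbb{R}$ or using the balanced weight on $M\otimes M_2(\mathbb C)$, one checks that the $2\times2$-matrix trick producing $(D(\varphi\circ E_2):D(\varphi\circ E_1))_t$ factors through the operator-valued weight $E_1\oplus E_2$ on $M\otimes M_2$ into $N\otimes M_2$, and the cocycle is read off from $E_1\oplus E_2$ before $\varphi$ is applied. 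Alternatively, and more economically for this paper, one can avoid the matrix construction by observing that the whole statement of the Lemma is subsumed by Theorem \ref{general-ineq}~(ii) restricted to the case of faithful $E_1$: condition (\ref{scalar-cond}) for one $\varphi$ already implies, via Lemma \ref{faithful-ineq}'s converse partner Lemma \ref{inv-faithful-ineq}, the vector-state condition (\ref{vector}) on $E_1,E_2$ themselves, and then Lemma \ref{faithful-ineq} pushes (\ref{vector}) back out to (\ref{scalar-cond}) for \emph{every} $\varphi$. So in fact the shortest proof is: apply Lemma \ref{inv-faithful-ineq} to get $\mathfrak{M}_{E_1}\subset\mathfrak{M}_{E_2}$ with $E_1=E_2$ on $\mathfrak{M}_{E_1}\cap M^+$ — wait, Lemma \ref{inv-faithful-ineq} requires the hypothesis for \emph{all} $\varphi$, so that does not apply directly; hence one genuinely needs the cocycle argument via Theorem \ref{cocycle}~(iii) and the $\varphi$-independence of the operator-valued cocycle, which I would state with a reference to \cite{H2} and a one-line justification via the balanced weight.
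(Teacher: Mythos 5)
Your proposal is correct and follows essentially the same route as the paper: translate (\ref{scalar-cond}) via Theorem \ref{cocycle}~(iii) into the analytic-extension-with-isometric-value-at-$-\tfrac{i}{2}$ property of the cocycle $\big(D(\varphi\circ E_2):D(\varphi\circ E_1)\big)_t$, observe that this cocycle does not depend on the choice of the faithful, semi-finite, normal weight $\varphi$ on $N$ (the paper cites \cite{H2}, Proposition 6.1~(2) for exactly this), and apply Theorem \ref{cocycle}~(iii) in the reverse direction. Your closing remark correctly identifies that Lemma \ref{inv-faithful-ineq} cannot be used as a shortcut since it needs the hypothesis for all $\varphi$; the only refinement needed is to replace your sketched justification of the $\varphi$-independence by the precise reference to \cite{H2}, Proposition 6.1~(2).
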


\begin{proof}
We have just to follow the reasoning in \cite{H2},  Remark on page 360, using \cite{Co2}, Th\'eor\`eme 3
instead of \cite{Co2}, Th\'eor\`eme 3 (that is the above Theorem \ref{cocycle} (iii) instead of Theorem
\ref{cocycle} (ii) ). Let us sketch the details.
\smallskip

Assuming that (\ref{scalar-cond}) holds true for some faithful, semi-finite, normal weight $\varphi_1$
on $N$, let $\varphi_2$ be any other faithful, semi-finite, normal weight on $N$.
According to Theorem \ref{cocycle} (iii), $\mathbb{R}\ni t\longmapsto
\big( D(\varphi_1\circ E_2) : D(\varphi_1\circ E_1)\big)_t\in M$ has a $w$-continuous extension
$\{\zeta\in\mathbb{C}\;\! ; -\frac 12\leq {\rm Im} \zeta\leq 1\}\ni \zeta\longmapsto
\big( D(\varphi_1\circ E_2) : D(\varphi_1\circ E_1)\big)_{\zeta}\in M\;\!$, which is analytic in the interior
and such that $\big( D(\varphi_1\circ E_2) : D(\varphi_1\circ E_1)\big)_{-\frac i2}$ is isometric.
On the other hand, by \cite{H2}, Proposition 6.1 (2), we have
\smallskip

\centerline{$\big( D(\varphi_2\circ E_2) : D(\varphi_2\circ E_1)\big)_t =
\big( D(\varphi_1\circ E_2) : D(\varphi_1\circ E_1)\big)_t$ for all $t\in\mathbb{R}\;\! .$}
\smallskip

\noindent Thus $\mathbb{R}\ni t\longmapsto
\big( D(\varphi_2\circ E_2) : D(\varphi_2\circ E_1)\big)_t\in M$ allows the extension
\medskip

\centerline{$\{\zeta\in\mathbb{C}\;\! ; -\frac 12\leq {\rm Im} \zeta\leq 1\}\ni \zeta\longmapsto
\big( D(\varphi_2\circ E_2) : D(\varphi_2\circ E_1)\big)_{\zeta}$}

\noindent\hspace{6.35 cm}$=\big( D(\varphi_1\circ E_2) : D(\varphi_1\circ E_1)\big)_{\zeta}\;\! ,$
\smallskip

\noindent which is $w$-continuous, analytic in the interior, and such that
\medskip

\centerline{$\big( D(\varphi_2\circ E_2) : D(\varphi_2\circ E_1)\big)_{- \frac i2}=
\big( D(\varphi_1\circ E_2) : D(\varphi_1\circ E_1)\big)_{- \frac i2}$ is isometric.}
\smallskip

\noindent Using now again Theorem \ref{cocycle} (iii), we infer that (\ref{scalar-cond}) holds
true for $\varphi_2\;\!$.

\end{proof}

Now we complete, in the case $E_1\;\! ,E_2$ have equal suports, the characterization of the situation
$E_2 \restriction (\mathfrak{M}_{E_1}\cap M^+) =E_1 \restriction (\mathfrak{M}_{E_1}\cap M^+)$
given in Theorem \ref{general-ineq} :

\begin{theorem}\label{compl-ineq}
Let $M$ be a $W^*$-algebra, $1_M\in N\subset M$ a $W^*$-subalgebra, and

\noindent $E_1\;\! ,E_2 : M^+\longrightarrow \overline{N}\;\!^+\!$ semi-finite, normal
operator valued weights of equal supports.
Then the following conditions are equivalent $:$
\medskip

\hspace{7 pt}$(i)$ $\mathfrak{M}_{E_1}\subset \mathfrak{M}_{E_2}\text{ and }\;\! E_2(a)=E_1(a)\;\! ,
a\in \mathfrak{M}_{E_1}\cap M^+$

\noindent\hspace{1.16 cm}$($implying, by {\rm Theorem \ref{general-ineq} (i)}, $E_2\le E_1)$.
\medskip

\hspace{3.5 pt}$(ii)$ $\mathfrak{M}_{\varphi\circ E_1}\! \subset
\mathfrak{M}_{\varphi\circ E_2}\text{ and }\;\! (\varphi\circ E_2)(a)
=(\varphi\circ E_1)(a)\;\! ,a\in \mathfrak{M}_{\varphi\circ E_1}\cap M^+$

\noindent\hspace{1.16 cm}for every faithful, semi-finite, normal weight $\varphi$ on $N$.
\medskip

$(iii)$ $\mathfrak{M}_{\varphi\circ E_1}\! \subset
\mathfrak{M}_{\varphi\circ E_2}\text{ and }\;\! (\varphi\circ E_2)(a)
=(\varphi\circ E_1)(a)\;\! ,a\in \mathfrak{M}_{\varphi\circ E_1}\cap M^+$

\noindent\hspace{1.16 cm}for some faithful, semi-finite, normal weight $\varphi$ on $N$.
\end{theorem}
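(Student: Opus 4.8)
The implications $(i)\Leftrightarrow(ii)$ and $(ii)\Rightarrow(iii)$ require no work: $(i)\Leftrightarrow(ii)$ is precisely Theorem \ref{general-ineq}~(ii) (the support hypothesis plays no role there), and $(ii)\Rightarrow(iii)$ is trivial. Thus everything comes down to proving $(iii)\Rightarrow(ii)$, and this is where the assumption $s(E_1)=s(E_2)$ is essential.

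My plan for $(iii)\Rightarrow(ii)$ is to push the hypothesis down to the reduced algebras of Proposition \ref{reduce}, where \emph{both} operator valued weights are faithful, invoke Lemma \ref{cocycle-cond} there to turn ``for some $\varphi$'' into ``for every $\varphi$'', and then push back up. Set $e:=s(E_1)=s(E_2)$, let $p_1\in N$ be the central support of $e$, $\pi_1:Np_1\to Ne$ the $*$-isomorphism $y\mapsto ye$, and form $\widetilde{E_j}:eM^+e\to\overline{Ne}^+$, $\widetilde{E_j}(a)=\pi_1\big(E_j(a)\big)$, exactly as in the proof of Theorem \ref{general-ineq}. Since $s(E_j)=e$, Proposition \ref{reduce} gives $s(\widetilde{E_j})=e$, so $\widetilde{E_1}$ \emph{and} $\widetilde{E_2}$ are both faithful; we also have the dictionary \eqref{reduced-dom}, \eqref{extended-dom}, \eqref{inversion}, together with $E_j(a)=E_j(eae)\in\overline{Np_1}^+$ for all $a\in M^+$ (from \eqref{reduced-oper.val.} and $E_j(1_M-e)=0$).

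Given the weight $\varphi$ on $N$ supplied by $(iii)$, put $\psi:=\varphi\circ\pi_1^{-1}$, a faithful, semi-finite, normal weight on $Ne$. Because $E_j(a)\in\overline{Np_1}^+$, one obtains $(\psi\circ\widetilde{E_j})(a)=(\varphi\circ E_j)(a)$ for every $a\in eM^+e$, hence $\mathfrak{M}_{\psi\circ\widetilde{E_j}}\cap(eM^+e)=\mathfrak{M}_{\varphi\circ E_j}\cap(eM^+e)$; combining this with $(iii)$ and the fact that $\mathfrak{M}_{\psi\circ\widetilde{E_j}}$ is the linear span of its positive part gives
\[
\mathfrak{M}_{\psi\circ\widetilde{E_1}}\subset\mathfrak{M}_{\psi\circ\widetilde{E_2}}\ \text{ and }\ (\psi\circ\widetilde{E_2})(a)=(\psi\circ\widetilde{E_1})(a)\ \text{ for }\ a\in\mathfrak{M}_{\psi\circ\widetilde{E_1}}\cap(eM^+e)
\]
for this particular $\psi$. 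Now Lemma \ref{cocycle-cond}, applied to the faithful operator valued weights $\widetilde{E_1},\widetilde{E_2}$ with $M,N$ replaced by $eMe,Ne$, upgrades the displayed condition to one valid for \emph{every} faithful, semi-finite, normal weight on $Ne$ --- that is, to \eqref{scalar-red}. By the equivalence \eqref{scalar}$\Leftrightarrow$\eqref{scalar-red} established inside the proof of Theorem \ref{general-ineq}, \eqref{scalar-red} is exactly condition $(ii)$, which completes the argument.

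The main obstacle is conceptual rather than computational: Lemma \ref{cocycle-cond}, the only tool on hand for the ``some $\Rightarrow$ every'' step, demands that \emph{both} operator valued weights be faithful, whereas $E_1,E_2$ need not be. The equality $s(E_1)=s(E_2)$ is precisely what guarantees that the reduction of Proposition \ref{reduce} makes \emph{both} $\widetilde{E_1}$ and $\widetilde{E_2}$ faithful (without it only $\widetilde{E_1}$ would be), so that Lemma \ref{cocycle-cond} applies at the reduced level; this also explains the Remark that the support condition is likely redundant, since only the ``every $\Rightarrow$ some'' triviality really uses it in an essential-looking way. Everything else is the routine bookkeeping of the ideals $\mathfrak{M}_{\varphi\circ E_j}$ and $\mathfrak{M}_{\psi\circ\widetilde{E_j}}$ under the correspondence of Proposition \ref{reduce}, already performed in the proof of Theorem \ref{general-ineq}.
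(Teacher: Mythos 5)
Your argument is correct and follows essentially the same route as the paper: reduce by $s(E_1)=s(E_2)$ via Proposition \ref{reduce} so that both $\widetilde{E_1}$ and $\widetilde{E_2}$ become faithful, transport the hypothesis of $(iii)$ to a weight $\psi=\varphi\circ\pi_1^{-1}$ on $Ne$, invoke Lemma \ref{cocycle-cond} to pass from ``some'' to ``every'', and transfer back to $N$. The only cosmetic difference is that you cite the equivalence (\ref{scalar})$\Leftrightarrow$(\ref{scalar-red}) already established inside the proof of Theorem \ref{general-ineq} for the final step, whereas the paper carries out that transfer explicitly; the content is identical.
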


\begin{proof}
Equivalence (i)$\Longleftrightarrow$(ii) was proved in Theorem \ref{general-ineq} (even
without assuming the equality of the supports of $E_1$ and $E_2$), while implication
(ii)$\Longrightarrow$(iii) is trivial. Thus the proof will be done once we prove
(iii)$\Longrightarrow$(ii), what will be performed by reduction to Lemma \ref{cocycle-cond}.
\smallskip

Let $p$ denote the central support of $s:=s(E_1)=s(E_2)\in N'\cap M$ in $N$.
Using the notations from Proposition \ref{reduce}, we can consider the semi-finite, normal
operator valued weights
$\widetilde{E_j} : s M^+\! s \longrightarrow \overline{(N s)}\;\!^+\! , j=1\;\! ,2\;\!$,
defined by the formula
\begin{equation}\label{reduced-weight-again}
\widetilde{E_j}(a)=\pi\big( E_j(a)\big)\;\! ,\qquad a\in s M^+\! s\;\! ,
\end{equation}
where $\pi$ denotes the ${}^*$-isomorphism $N p\ni y\longmapsto y\;\! s\in N s\;\!$.
According to Proposition \ref{reduce}, the operator valued weights $\widetilde{E_1}$ and
$\widetilde{E_2}$ are also faithful.
\smallskip

Now let us assume that
\begin{equation}\label{by-scalar-in}
\mathfrak{M}_{\varphi_1\circ E_1}\! \subset
\mathfrak{M}_{\varphi_1\circ E_2}\text{ and }\;\! (\varphi_1\circ E_2)(a)
=(\varphi_1\circ E_1)(a)\;\! ,a\in \mathfrak{M}_{\varphi_1\circ E_1}\cap M^+
\end{equation}
holds true for some faithful, semi-finite, normal weight $\varphi_1$ on $N$.
We have to show that, for any faithful, semi-finite, normal weight $\varphi_2$
on $N$,
\begin{equation}\label{by-scalar-out}
\mathfrak{M}_{\varphi_2\circ E_1}\! \subset
\mathfrak{M}_{\varphi_2\circ E_2}\text{ and }\;\! (\varphi_2\circ E_2)(a)
=(\varphi_2\circ E_1)(a)\;\! ,a\in \mathfrak{M}_{\varphi_2\circ E_1}\cap M^+
\end{equation}
still holds true.

We define the faithful, semi-finite, normal weights $\psi_k\;\! ,k=1\;\! ,2\;\!$, on $N s$
by the formula

\centerline{$\psi_k (b):=\varphi_k\big( \pi^{-1}(b)\big)\;\! ,\qquad b\in (N s)^+.$}
\medskip

\noindent Then, for each $j=1\;\! ,2$ and $k=1\;\! ,2\;\!$, we have
\begin{equation}\label{value}
(\psi_k\circ \widetilde{E_j})(a)\overset{(\ref{reduced-weight-again})}{=}
\psi_k\Big( \pi\big( E_j(a)\big)\Big) =\varphi_k \big( E_j(a)\big)
\;\! ,\qquad a\in s M^+\! s\;\! ,
\end{equation}
in particular
\begin{equation}\label{domain}
\mathfrak{M}_{\psi_k\circ \widetilde{E_j}} \cap (s M^+s)
= \mathfrak{M}_{\varphi_k\circ E_j} \cap (s M^+s)\;\! .
\end{equation}

Now, by (\ref{domain}) and (\ref{by-scalar-in}) we have
\medskip

\centerline{$\mathfrak{M}_{\psi_1\circ \widetilde{E_1}}\! \cap (s M^+s) =
\mathfrak{M}_{\varphi_1\circ E_1}\! \cap (s M^+s) \subset
\mathfrak{M}_{\varphi_1\circ E_2}\! \cap (s M^+s) =
\mathfrak{M}_{\psi_1\circ \widetilde{E_2}}\! \cap (s M^+s)\;\! ,$}
\medskip

\noindent and by (\ref{value}) and (\ref{by-scalar-in}) we obtain for every
$a\in \mathfrak{M}_{\psi_1\circ \widetilde{E_1}} \cap (s M^+s)\;\!$:
\smallskip

\centerline{$(\psi_1\circ \widetilde{E_2})(a)=\varphi_1 \big( E_2(a)\big) =
\varphi_1 \big( E_1(a)\big) =(\psi_1\circ \widetilde{E_1})(a)\;\! .$}
\medskip

\noindent Thus we can apply Lemma \ref{cocycle-cond} deducing
\begin{equation}\label{by-scalar-out-red}
\mathfrak{M}_{\psi_2\circ \widetilde{E_1}}\! \subset
\mathfrak{M}_{\psi_2\circ \widetilde{E_2}}\text{ and }\;\! (\psi_2\circ \widetilde{E_2})(a)
=(\psi_2\circ \widetilde{E_1})(a)\;\! ,a\in \mathfrak{M}_{\psi_2\circ \widetilde{E_1}}\cap
(s M^+s)\;\! .
\end{equation}

Let $a\in \mathfrak{M}_{\varphi_2\circ E_1}\cap M^+$ be arbitrary. By (\ref{value}) and
(\ref{reduced})
we have
\smallskip

\centerline{$(\psi_k\circ \widetilde{E_j})(s\;\! a\;\! s)\overset{(\ref{value})}{=}
(\varphi_2\circ E_1)(s\;\! a\;\! s)\overset{(\ref{reduced})}{=}(\varphi_2\circ E_1)(a)<+\infty\;\! ,$}
\medskip

\noindent so $s\;\! a\;\! s\in \mathfrak{M}_{\psi_2\circ \widetilde{E_1}}\cap (s M^+s)\;\! .$
Using (\ref{reduced}), (\ref{value}) and (\ref{by-scalar-out-red}), we obtain
\smallskip

\centerline{$(\varphi_2\circ E_2)(a)\overset{(\ref{reduced})}{=}(\varphi_2\circ E_2)(s\;\! a\;\! s)
\overset{(\ref{value})}{=}(\psi_2\circ \widetilde{E_2})(s\;\! a\;\! s)
\overset{(\ref{by-scalar-out-red})}{=}(\psi_2\circ \widetilde{E_1})(s\;\! a\;\! s)$}
\smallskip

\noindent\hspace{2.55 cm}$\overset{(\ref{value})}{=}\! (\varphi_2\circ E_1)(s\;\! a\;\! s)
\overset{(\ref{reduced})}{=}(\varphi_2\circ E_1)(a)<+\infty\;\! .$
\smallskip

\noindent In other words $a\in \mathfrak{M}_{\varphi_2\circ E_2}\cap M^+$ and
$(\varphi_2\circ E_2)(a)=(\varphi_2\circ E_1)(a)\;\!$. This proves

\noindent (\ref{by-scalar-out}).

\end{proof}

\begin{remark}
It is possible that the statement of Theorem \ref{compl-ineq} holds without the assumption
$s(E_1)=s(E_2)\;\!$. This would follow if Lemma \ref{cocycle-cond} would hold without
assuming the faithfulness of $E_2\;\!$.

Actually the cocycle $(D\psi : D\varphi )_t\;\! ,t\in \mathbb{R}\;\!$, of a not necessarily faithful,
semi-finite, normal weight $\psi$ on a $W^*$-algebra $M$ with respect to a faithful, semi-finite,
normal weight $\varphi$ on $M$ was already considered by A. Connes and M. Takesaki
in \cite{CoT}, I.1, pages 478-479 (see also \cite{St}, Theorem 3.1).
If
\begin{itemize}
\item Theorem \ref{cocycle} would keep validity even in this setting and
\item the proof of \cite{H2}, Proposition 6.1 could be adapted to the situation of semi-finite, normal
operator valued weights $E_1\;\! ,E_2$ from $M$ to a $W^*$-subalgebra $1_M\in N\subset M$,
where only $E_1$ assumed to be faithful, and thus prove that the cocycle
$\big( D(\theta\circ E_2) : D(\theta\circ E_1) \big)_t\;\! ,t\in \mathbb{R}\;\!$, does
not depend on the faithful, semi-finite, normal weight $\theta$ on $N$,
\end{itemize}
then the proof of Lemma \ref{cocycle-cond} would work without assuming the faithfulness of $E_2\;\!$.
\end{remark}

\bibliographystyle{amsplain}

\begin{thebibliography}{10}

\bibitem {Ak} C. A. Akemann, \textit{The dual space of an operator algebra},
Trans. Amer. Math. Soc. \textbf{126} (1967), 286-302.

\bibitem {AZ} H. Araki, L. Zsid\'o,
\textit{Extension of the structure theorem of Borchers and its application to \\
half-sided modular inclusions}, Rev. Math. Phys. \textbf{17} (2005), 1--53.

\bibitem {CZ} I. Cior\u anescu, L. Zsid\'o,
{\textit Analytic generators for one-parameter groups},
T\^ ohoku Math. J. \textbf{28} (1976), 311-346.

\bibitem {C} D. L. Cohn, \textit{Measure Theory},
Birkh\"{a}user, Boston$\;\!\cdot\;\!$Basel$\;\!\cdot\;\!$Stuttgart, 1980.

\bibitem {Co1} A. Connes, \text{Une classification des facteurs de type III},
Ann. Sci. \'Ecole Norm. Sup. \textbf{6} (1973), 133-252.

\bibitem {Co2} A. Connes, \text{Sur le th\'eor\`eme de Radon-Nikodym pour les poids
normaux fid\`eles semi-finis}, \\ Bull. Sc. math., $2^e$ s\'erie, \textbf{97} (1973),
253-258.

\bibitem {CoT} A. Connes, M Takesaki \text{The flow of weights on factors of type III},
T\^ ohoku Math. J. \textbf{29} (1977), 473-575.

\bibitem {Dix} J. Dixmier, \textit{Les alg\`ebres d'op\'erateurs dans l'espace hilbertien}
(\textit{Alg\`ebres de von Neumann}), \\ $2^e$ \'edition, Gauthier-Villars, Paris, 1969.

\bibitem {DS} N. Dunford, J. T. Schwartz, \textit{Linear Operators, Part I},
Interscience Publishers, New-York, 1958.

\bibitem {H} U. Haagerup,  \textit{Normal wights on $W^*$-algebras},
J. Funct. Anal. \textbf{19} (1975), 302--317.

\bibitem {H1} U. Haagerup,  \textit{Operator valued weights in von Neumann algebras} I,
J. Funct. Anal. \textbf{32} (1979), 175--206.

\bibitem {H2} U. Haagerup,  \textit{Operator valued weights in von Neumann algebras} II,
J. Funct. Anal. \textbf{33} (1979), 339--361.

\bibitem {PT} G. K. Pedersen and M. Takesaki,  \textit{The Radon-Nikodym
theorem for von Neumann algebras}, Acta Math. \textbf{130} (1973),
53--87.

\bibitem{St} \c{S}. Str\v{a}til\v{a},
{\textit Modular theory in operator algebras}, Abacus Press,
Tunbridge Wells, Kent, 1981.

\bibitem{SZ1} \c{S}. Str\v{a}til\v{a}, L. Zsid\'o,
{\textit Lectures on von Neumann algebras}, 2nd edition, Cambridge Univ. Press, 2019

\bibitem{SZ2} \c{S}. Str\v{a}til\v{a}, L. Zsid\'o, \textit{Operator Algebras},
INCREST Prepublication (1977-1979), 511 p., to appear at The Theta Foundation,
Bucure\c{s}ti, Romania.


\bibitem{Z1} L. Zsid\'o, \textit{Spectral and ergodic properties of the analytic generator},
J. Appr. Theory \textbf{20} (1977), 77-138.

\bibitem{Z2} L. Zsid\'o, \textit{On the equality of two weights},
Rev. Roum. Math Pures et Appl. \textbf{23} (1978), 631--646.

\end{thebibliography}

\end{document}